\documentclass{article}

\newcommand{\vect}[1]{\boldsymbol{#1}} 
\newcommand{\R}{\mathbb{R}} 
\newcommand{\1}{\vect{1}} 
\newcommand{\0}{\vect{0}}
\newcommand{\I}{\mathcal{I}} 
\newcommand{\lyp}{\mathcal{L}} 
\newcommand{\diag}[1]{\textrm{diag}\left({#1}\right)} 
\newcommand{\id}[1]{\vect{1}_{\{{#1}\}}} 
\newcommand{\Proj}{\mathcal{P}} 

\usepackage{threeparttable}
\usepackage{enumerate}

\usepackage{microtype}
\usepackage{graphicx}
\usepackage{subcaption}
\usepackage{booktabs} 

\usepackage{hyperref}

\usepackage[preprint]{icml2026}

\usepackage{amsmath}
\usepackage{amssymb}
\usepackage{mathtools}
\usepackage{amsthm}

\usepackage[capitalize,noabbrev]{cleveref}

\theoremstyle{plain}
\newtheorem{theorem}{Theorem}[section]
\newtheorem{proposition}[theorem]{Proposition}
\newtheorem{lemma}[theorem]{Lemma}

\theoremstyle{definition}

\theoremstyle{remark}

\usepackage[textsize=tiny]{todonotes}

\begin{document}

\twocolumn[
\icmltitle{Inexact Bregman Sparse Newton Method for Efficient Optimal Transport}



  \icmlsetsymbol{equal}{*}

  \begin{icmlauthorlist}
    \icmlauthor{Jianting Pan}{yyy}
    \icmlauthor{Ji'an Li}{yyy}
    \icmlauthor{Ming Yan}{yyy}
  \end{icmlauthorlist}

  \icmlaffiliation{yyy}{School of Data Science, The Chinese University of Hong Kong, Shenzhen}

  \icmlcorrespondingauthor{Ming Yan}{yanming@cuhk.edu.cn}

  \icmlkeywords{Machine Learning, ICML}

  \vskip 0.3in
]



\printAffiliationsAndNotice{}  

\begin{abstract}
Computing exact Optimal Transport (OT) distances for large-scale datasets is computationally prohibitive. While entropy-regularized alternatives offer speed, they sacrifice precision and frequently suffer from numerical instability in high-accuracy regimes. To address these limitations, we propose the Inexact Bregman Sparse Newton (IBSN) method, which efficiently solves the exact OT problems. Our approach utilizes a Bregman proximal point framework through a sequence of semi-dual subproblems. By solving these subproblems inexactly, we significantly reduce per-iteration complexity while maintaining a theoretical guarantee of convergence to the true optimal plan. To further accelerate the algorithm, we develop a sparse Newton-type solver for the subproblem and employ a Hessian sparsification strategy that drastically lowers memory and time costs without sacrificing accuracy. We provide rigorous theoretical guarantees for the global convergence of the algorithm. Extensive experiments demonstrate that IBSN consistently outperforms state-of-the-art methods in both computational speed and solution precision. 
\end{abstract}

\section{Introduction}

Optimal transport (OT) has emerged as a fundamental tool for measuring the difference between probability distributions. Unlike standard metrics, OT offers a geometrically meaningful distance. This unique property has led to widespread applications across machine learning~\cite{gordaliza2019obtaining,oneto2020exploiting,montesuma2024recent}, computer vision~\cite{bonneel2023survey,rubner2000earth,solomon2014earth}, and statistics~\cite{goldfeld2024statistical,klatt2020empirical}.

The discrete OT problem can be characterized by the following linear programming problem:
\begin{equation}
\begin{aligned}
\label{opt: problem}
\min_{X \in \R^{m \times n}}\ & \langle C, X\rangle \\
\text {s.t. }  X \in & \Omega := \{X: X \1_n=\vect{a}, X^{\top} \1_m=\vect{b}, X \geq 0\},
\end{aligned}
\end{equation}
where $C \in \R_{+}^{m \times n}$ is the cost matrix, $\vect{a} \in \R_{++}^m$, $\vect{b} \in \R_{++}^n$ are two vectors satisfying $\1_m^\top \vect{a} = \1_n^\top \vect{b} = 1$, $\1_m$ and $\1_n$ are the vectors of all ones in $\R^m$ and $\R^n$ respectively. While this formulation is linear, finding the solution of large-scale problems is computationally challenging. Classical solvers for the associated large-scale linear programs, such as interior-point~\cite{nesterov1994interior} or network simplex methods~\cite{burkard2012assignment}, scale poorly and become too expensive when applied to high-dimensional datasets.

To address the computational challenges, an entropic regularization term $\phi(X) = \sum_{i=1}^m \sum_{j=1}^n X_{ij}\log X_{ij}$ was introduced to the objective, which is known as entropy-regularized optimal transport (EOT)~\cite{cuturi2013sinkhorn}:
\begin{equation}
\label{opt: EOT}
\min_{X \in \Omega}\ \langle C, X\rangle + \eta \phi(X),
\end{equation}
where $\eta > 0$ is a regularization parameter. This modification yields a $\eta$-strongly convex problem on $\Omega$ that can be solved efficiently via the Sinkhorn algorithm~\cite{cuturi2013sinkhorn}, which alternates row and column normalizations in a matrix-scaling procedure. Despite its simplicity and scalability, it is a first-order method with sublinear convergence, meaning it often requires many iterations to reach high accuracy~\cite{wang2025sparse}. Recently, to speed up convergence, researchers have turned to second-order approaches like the Newton method~\cite{tang2024safe, wang2025sparse, tang2024accelerating}. By combining Newton updates with sparse Hessian approximations and efficient linear solvers, these methods achieve fast local convergence and handle large datasets much better than standard Sinkhorn. However, the EOT solution remains only an approximation of the original OT problem. While reducing the regularization parameter improves approximation accuracy, excessively small regularization values cause the algorithm to slow down and lead to severe numerical instability, such as overflow and underflow. 

Instead of settling for the approximate solution provided by EOT, recent research has shifted focus back to solving the original OT problem exactly. This is typically achieved by viewing EOT~\eqref{opt: EOT} as a subproblem within a larger iterative framework, such as the Bregman proximal point algorithm. A natural approach is to solve these subproblems exactly until convergence. \citet{wu2025pins} accelerated this process by applying Newton methods with sparsification techniques. However, requiring an exact solution at every step created a heavy computational burden. To reduce this cost, some inexact frameworks were introduced; these methods do not require solving the subproblem perfectly~\cite{xie2020fast, eckstein1998approximate, solodov2000inexact}. The limitation, however, is that verifying the stopping criterion for these methods is often theoretically difficult. Recently, \cite{yang2022bregman} addressed this by proposing a new inexact framework with a stopping condition that is easy to verify. Motivated by this breakthrough, we utilize this inexact framework with efficient second-order Newton methods to achieve both high precision and computational efficiency.

In this paper, we propose an \textbf{I}nexact \textbf{B}regman \textbf{S}parse \textbf{N}ewton (IBSN) framework for efficiently solving the original optimal transport problem~\eqref{opt: problem}. We construct subproblems based on Bregman divergence, while we further transform the subproblem into their semi-dual formulations. To improve overall efficiency, the subproblems are solved inexactly at each iteration. Moreover, to accelerate inner convergence, we employ a Newton-type refinement equipped with Hessian sparsification. By selectively retaining only the most significant entries of the Hessian, this technique drastically reduces computational overhead without sacrificing accuracy. By combining inexact Bregman updates with sparse Newton acceleration, IBSN achieves both high precision and scalability on large-scale datasets. Finally, we establish rigorous convergence guarantees for the algorithm and demonstrate its superior empirical performance compared to state-of-the-art methods.

\paragraph{Contribution} The main contributions of this work are summarized as follows:
\begin{enumerate}
    \item A Bregman Inexact Sparse Newton (IBSN) method is proposed for efficiently computing solutions with high accuracy to the original OT problem, where each subproblem is solved inexactly.
    \item A Hessian sparsification scheme is introduced, which guarantees positive definiteness within a subspace and provides strict control over the induced approximation error.
    \item Building on the sparsified Hessian, a Newton-type solver for the semi-dual subproblem is developed. This solver fully exploits the resulting sparse structure and achieves rapid convergence with substantially reduced computational cost.
    \item We establish rigorous theoretical guarantees for the proposed framework. Proofs of all theoretical results (Section~\ref{sec: Methodology}-~\ref{sec: Theoretical Analysis}) are deferred to the Appendix~\ref{app: Proofs of Theoretical Results}. We also conduct extensive numerical experiments demonstrating that IBSN consistently outperforms existing state-of-the-art methods in both accuracy and efficiency.
\end{enumerate}

\paragraph{Notation} For any $\vect{x} \in \R^n$, we denote its $i$-th entry by $x_i$. For an index set $\I \subseteq [n]:= \{1,2,\dots,n\}$, $\vect{x}_{\I} \in \R^{|\I|}$ represents the subvector of $\vect{x}$ corresponding to the indices in $\I$, where $|\I|$ denotes the cardinality of $\I$. The span of vector $\vect{x}$ is defined as $\text{span}\{\vect{x}\} = \{c\vect{x}: c \in \R\}$. We denote $\1_n$ and $\0_n$ as the $n$-dimensional vectors of all ones and all zeros, respectively. We use $\lambda_{\min}(\cdot)$ and $\lambda_{\max}(\cdot)$ to represent the smallest and largest eigenvalues of real symmetric matrices, respectively. For a matrix $A \in \R^{m \times n}$, let $A_{i\cdot}$ be the vector of the $i$-th row of $A$. The kernel of matrix $A$ is defined as $\text{ker}(A) = \{\vect{x}: A\vect{x} = \0_m\}$. We denote $\|\cdot\|_p$ as the $\ell_p$ norm of a vector or $\ell_p$ induced norm of a matrix. For simplicity, we use $\|\cdot\|$ to denote the Euclidean norm of a vector or the $\ell_2$ induced norm of a matrix.

\section{Related Work}

\paragraph{Applications of OT} The versatility of OT has led to its adoption in a broad range of scientific and engineering domains. In machine learning, OT distances underpin generative adversarial networks~\cite{daniels2021score,arjovsky2017wasserstein}, domain adaptation~\cite{courty2016optimal,courty2017joint}, and distributionally robust optimization~\cite{onken2021ot} by offering geometry-aware measures of distributional discrepancy. In computer vision and graphics, OT provides a principled framework for color transfer~\cite{rabin2014adaptive}, texture mixing~\cite{rabin2011wasserstein}, and point cloud registration~\cite{shen2021accurate,puy2020flot}. These diverse applications highlight the growing importance of efficient OT computation as a core building block for modern data science.

\paragraph{Computational OT} Algorithmic advances in OT have followed two main directions. The first direction leverages entropy regularization for its computational benefits. Initial progress relied on first-order methods, most notably the Sinkhorn algorithm~\cite{cuturi2013sinkhorn} and its variants (e.g., Greenkhorn~\cite{altschuler2017near} and stabilized Sinkhorn~\cite{schmitzer2019stabilized}). For further acceleration, Newton-based methods (as seen in~\cite{tang2024safe,tang2024accelerating,wang2025sparse,brauer2017sinkhorn,wu2025pins}) exploit second-order information and sparsity. The second direction tackles the original OT problem directly, often by embedding the EOT form into an iterative framework. The strategy of employing EOT as a subproblem within a Bregman framework is a natural and common approach. Other direct solvers include Halpern-accelerated approaches (HOT)~\cite{zhang2025hot}, primal-dual hybrid gradient method (PDHG)~\cite{applegate2021practical}, Douglas-Rachford splitting algorithm~\cite{mai2021fast}, and extragradient method~\cite{li2025fast}. 

\paragraph{Bregman proximal point} 
Bregman divergence has been widely adopted as a proximal regularizer in large-scale optimization, particularly when the decision variables lie in the probability simplex~\cite{bauschke2017descent,beck2003mirror,bolte2018first,ben2001ordered,pan2025efficient}. Compared to Euclidean proximals, Bregman proximal mappings naturally encode the geometry of distributions and lead to algorithms with improved stability and interpretability.

\paragraph{Inexact optimization of inner subproblem} A common strategy in large-scale optimization, including OT, is to solve inner subproblems only approximately rather than to full precision~\cite{xie2020fast,yang2022bregman,burachik1997enlargement,eckstein1998approximate}. Such inexactness reduces per-iteration cost while still guaranteeing global convergence, provided that the approximation error is controlled relative to the outer iteration. 

\section{Methodology}
\label{sec: Methodology}

This section details the Inexact Bregman Sparse Newton (IBSN) framework, a Newton-type scheme for solving the original OT problem $\eqref{opt: problem}$. We begin by introducing the subproblem formulation and its corresponding semi-dual problem based on Bregman divergence in Section~\ref{subsec: Subproblem formulation}. Next, in Section~\ref{subsec: Sparsifying the Hessian matrix}, we describe the Hessian sparsification strategy designed to enhance computational efficiency. Finally, Section~\ref{subsec: The proposed algorithm} outlines the complete algorithmic framework, detailing the inexact Newton update mechanism and overall procedure.

\subsection{Subproblem formulation}
\label{subsec: Subproblem formulation}

To compute the optimal transport plan, we employ a Bregman proximal point framework. In this iterative scheme, each iteration solves a regularized subproblem of the form:
\begin{equation}
\label{opt: subproblem}
X^{k+1} \in \arg \min_{X \in \Omega} \left\{\langle C, X\rangle + \eta D_{\phi}(X, X^k)\right\},
\end{equation}
where $D_{\phi}(X, Y) = \sum_{i,j}X_{ij} \log \frac{X_{ij}}{Y_{ij}} - \sum_{i,j}X_{ij} + \sum_{i,j}Y_{ij}$ is the Bregman divergence based on the negative entropy function $\phi$. It has been shown that the subproblem~\eqref{opt: subproblem} can be equivalently stated via its two-dual formulation~\cite{tang2024safe}:
\begin{equation}
\begin{aligned}
\label{opt: two-dual formulation of subproblem}
    \min_{\vect{\gamma} \in \R^n, \vect{\zeta} \in \R^m}  & Z^k(\vect{\gamma}, \vect{\zeta}) :=  -\vect{a}^\top \vect{\zeta} - \vect{b}^\top \vect{\gamma}\\
     + &\eta \sum_{i=1}^m \sum_{j=1}^n X_{ij}^k\exp\{(\zeta_i + \gamma_j - C_{ij})/\eta\}.
\end{aligned}
\end{equation}

Under this framework, the Sinkhorn algorithm can be interpreted as a simple alternating minimization algorithm that iteratively minimizes $Z^k(\vect{\gamma}, \vect{\zeta})$ with respect to $\vect{\gamma}$ and $\vect{\zeta}$. 

To simplify the optimization landscape, especially for our Newton-type method, we work with the semi-dual formulation. This form is derived by eliminating one set of dual variables. Without loss of generality, we proceed to eliminate the dual variable $\vect{\zeta}$ by letting $\frac{\partial Z^k(\vect{\gamma}, \vect{\zeta})}{\partial \vect{\zeta}} = \0_m$, i.e., 
\begin{equation}
\label{equ: best zeta given gamma}
    \zeta(\vect{\gamma})_i = \eta \log a_i - \eta \log \Big(\sum_{j=1}^n X_{ij}^k \exp\{(\gamma_j - C_{ij})/\eta\}\Big).
\end{equation}
Substituting this back into the two-dual problem~\eqref{opt: two-dual formulation of subproblem}, we obtain the semi-dual formulation which minimizes over only $\vect{\gamma}$:
\begin{equation}
\label{opt: semi-dual problem}
\begin{aligned}
    \min_{\vect{\gamma} \in \R^n} & \lyp^k(\vect{\gamma}) := - \sum_{j=1}^n \gamma_jb_j \\
     +&\eta \sum_{i=1}^m a_i \log \Big(\sum_{j=1}^n X_{ij}^k \exp \left((\gamma_j - C_{ij})/{\eta}\right)\Big).
\end{aligned}
\end{equation}

The following lemma guarantees the existence of an optimal solution for this formulation~\eqref{opt: semi-dual problem}.

\begin{lemma}
\label{lem: semi-dual solution}
    The semi-dual problem~\eqref{opt: semi-dual problem} has a solution. 
\end{lemma}

Let $\vect{\gamma}^{k, \star}$ denote an optimal solution to~\eqref{opt: semi-dual problem}, then the corresponding primal update $X^{k+1}$ can be recovered via $X^{k+1} = \diag{\vect{a}} P^k(\vect{\gamma}^{k, \star})$, where $P^k(\vect{\gamma})$ is a matrix with entries:
\begin{equation}
\label{equ: p update}
    \left[P^k(\vect{\gamma})\right]_{ij} 
    = \frac{X_{ij}^k \exp\left((\gamma_j - C_{ij})/{\eta}\right)}{\sum_{l=1}^n X_{il}^k \exp\left((\gamma_l - C_{il})/{\eta}\right)}.
\end{equation}



The partial gradient $g^k$ of $\lyp^k(\vect{\gamma})$ with respect to $\vect{\gamma}$ and its corresponding Hessian matrix $H^k$ are given by
\begin{equation}
\label{equ: grad and hess}
\begin{aligned}
g^k(\vect{\gamma}) & = (P^k(\vect{\gamma}))^{\top} \vect{a} - \vect{b}, \\
H^k(\vect{\gamma}) & = \frac{1}{\eta} \left(\diag{P^k(\vect{\gamma})^\top\vect{a}} - P^k(\vect{\gamma})^\top \diag{\vect{a}} P^k(\vect{\gamma})\right).
\end{aligned}
\end{equation}

This highly smooth and structured formulation provides major advantages for using Newton methods to solve~\eqref{opt: semi-dual problem}. First, the existence and easy computation of the Hessian enables the use of Newton method. This guarantees a much faster rate of convergence compared to the sublinear convergence typical of first-order algorithms like Sinkhorn. Second, compared to the classical two-dual formulation, this approach reduces the number of dual variables from $(m+n)$ to $n$. This significantly lowers the memory requirement for the Hessian matrix (from $(m+n) \times (m+n)$ to $n \times n$) and reduces the computational effort for computing the Newton direction. Third, the inherent structure of the Hessian is ideally suited for the sparsification technique we detail in the next subsection. This mathematical property is key to reducing the computational cost without sacrificing accuracy.


\subsection{Sparsifying the Hessian matrix}
\label{subsec: Sparsifying the Hessian matrix}
In this subsection, we detail our strategy for accelerating the process that uses to compute the Newton directions. For brevity, in the current subproblem iteration $k$, we denote the matrix, gradient and Hessian by $P = P^k(\vect{\gamma})$, $\vect{g} = g^k(\vect{\gamma})$ and $H = H^k(\vect{\gamma})$, respectively in this subsection. 

The classical Newton method requires solving the linear system $H (\Delta\vect{\gamma}) = -\vect{g}$. However, computing this solution directly is prohibitively expensive for large-scale OT problems, especially for the dense matrix.
The key insight that allows us to bypass this cost is the structure of the solution: the optimal transport plan is often inherently sparse. While the exact Hessian $H$ is formally dense in the iteration, its structure is closely related to the transport plan, suggesting that a sparse approximation is both reasonable and highly beneficial. Therefore, we introduce a Hessian sparsification technique to approximate $H$ and significantly reduce the required computation.  

Our strategy constructs a `sparse' approximation $H_{\rho}$ of the exact Hessian $H$. This approach is simple and leverages the structure inherent in the definition of $H$ and the intermediate matrix $P$. We achieve sparsification by retaining only the dominant elements of $P$. A subsequent normalization step is then applied to ensure the row sums are preserved (i.e., sum of each row remains 1). This approach produces a sparse approximation that preserves accuracy while significantly lowering the cost of the Newton updates. We detail the specific thresholding and normalization steps for constructing $H_{\rho}$ in Algorithm~\ref{alg: Sparsifying the Hessian matrix}.

\begin{algorithm}
\caption{Sparsifying the Hessian matrix}
\label{alg: Sparsifying the Hessian matrix}
\begin{algorithmic}[1]
    \STATE {\bfseries Input:} Outer iteration $k$, dual variable $\vect{\gamma}$, threshold parameter $\rho \geq 0$, $i^{\star} = \arg \max_{i \in [m]} a_i$.
    \STATE Compute $P \gets P^k(\vect{\gamma})$ using~\eqref{equ: p update}.
    \STATE Construct the sparsified matrix $P_{\rho}$ by setting 
    $$
    [P_{\rho}]_{ij} \gets \frac{P_{ij}\id{P_{ij} \geq \rho}}{\sum_{k=1}^n P_{ik}\id{P_{ik} \geq \rho}}\ 
    \forall i \ne i^{\star},\ [P_{\rho}]_{i^{\star} \cdot} \gets P_{i^{\star}\cdot}.
    $$ 
    \STATE Compute $H_{\rho} \gets \left(\diag{P_\rho^\top \vect{a}} - P_{\rho}^\top \diag{\vect{a}} P_{\rho} \right)/\eta$.
    \STATE {\bfseries Output:} $H_{\rho}$.
\end{algorithmic}

\end{algorithm}

This construction naturally raises two important questions. First, \textbf{is $H_{\rho}$ invertible}? This property is crucial for implementing the Newton step, as computing the direction $\Delta \vect{\gamma}$ requires solving the linear system $H_{\rho}(\Delta \vect{\gamma}) = -\vect{g}$. Ensuring invertibility guarantees that the system admits a unique solution and that the Newton direction is well defined. Second, \textbf{how accurate is $H_{\rho}$ as an approximation of the true Hessian $H$}? A quantitative understanding of this discrepancy guides the choice of the sparsification threshold $\rho$, balancing computational efficiency with approximation accuracy.

To address these questions, we first establish the structural properties of $H_{\rho}$. The following result shows that the sparsified Hessian constructed by Algorithm~\ref{alg: Sparsifying the Hessian matrix} remains positive definite on the feasible subspace, ensuring that the Newton step is always well defined.

\begin{theorem}
\label{thm: positive semidefiniteness}
The matrix $H_{\rho}$ output by Algorithm~\ref{alg: Sparsifying the Hessian matrix} is symmetric and positive semidefinite, and its kernel satisfies $\text{ker}(H_\rho) = \text{span}\{\1_n\}$ $\forall \rho \geq 0$. 
\end{theorem}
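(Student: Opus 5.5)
Symmetry is immediate from the formula $H_\rho = (\diag{P_\rho^\top\vect{a}} - P_\rho^\top\diag{\vect{a}}P_\rho)/\eta$, since both terms are symmetric. For positive semidefiniteness I will write the quadratic form as a weighted variance. Every row of $P_\rho$ is a probability vector. For rows $i\ne i^\star$ this holds by the normalization in Algorithm~\ref{alg: Sparsifying the Hessian matrix}; the denominator is positive because the largest entry of a row summing to one is at least $1/n$, so for $\rho\le 1/n$ the row keeps at least one entry (I will note this implicit requirement, or equivalently assume each row retains its maximal entry). Row $i^\star$ is the original row of $P$, which is strictly positive because $X^k>0$ entrywise. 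Then for any $\vect{x}\in\R^n$,
\begin{equation*}
\eta\,\vect{x}^\top H_\rho \vect{x} = \sum_{i=1}^m a_i\Big(\sum_j [P_\rho]_{ij}x_j^2 - \Big(\sum_j [P_\rho]_{ij}x_j\Big)^2\Big) = \sum_{i=1}^m a_i \sum_j [P_\rho]_{ij}\big(x_j - \bar x_i\big)^2,
\end{equation*}
where $\bar x_i = \sum_j [P_\rho]_{ij}x_j$. This uses $\sum_j [P_\rho]_{ij}=1$ and is the standard variance identity. Since $a_i>0$, every term is nonnegative, so $H_\rho\succeq 0$.

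For the kernel, first $H_\rho\1_n = (P_\rho^\top\vect{a} - P_\rho^\top\diag{\vect{a}}P_\rho\1_n)/\eta = (P_\rho^\top\vect{a}-P_\rho^\top\vect{a})/\eta = \0_n$, because $P_\rho\1_n=\1_m$. Hence $\text{span}\{\1_n\}\subseteq\ker(H_\rho)$. Conversely, for a PSD matrix $H_\rho\vect{x}=\0$ iff $\vect{x}^\top H_\rho\vect{x}=0$. By the identity above this forces every term to vanish; in particular the $i^\star$ term vanishes, so $\sum_j P_{i^\star j}(x_j-\bar x_{i^\star})^2=0$. Since $a_{i^\star}>0$ and $P_{i^\star j}>0$ for all $j$, we get $x_j=\bar x_{i^\star}$ for all $j$, i.e. $\vect{x}\in\text{span}\{\1_n\}$.

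The only delicate point is this last step. Generic sparsified rows can disconnect the bipartite support graph and enlarge the kernel. The algorithm's choice of keeping one full, strictly positive row $i^\star$ is exactly what rules this out, so I would emphasize positivity of $X^k$ (inherited inductively from the multiplicative update \eqref{equ: p update} with a positive initialization) and of $\exp(\cdot)$. I would also check that this argument holds for every $\rho\ge0$, given that the normalization is well defined.
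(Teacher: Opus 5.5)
Your proof is correct and is essentially the paper's argument: the paper writes $\eta\,\vect{z}^\top H_\rho\vect{z}=\frac{1}{2}\sum_{j,k}W_{jk}(z_j-z_k)^2$ with $W=P_\rho^\top\diag{\vect{a}}P_\rho$, which is your per-row variance identity summed over $i$ with weights $a_i$, and it obtains the kernel from $W_{jk}\ge a_{i^\star}[P_\rho]_{i^\star j}[P_\rho]_{i^\star k}>0$, the same use of the strictly positive row $i^\star$ that you make. Your explicit check that $H_\rho\1_n=\0_n$, your argument that $\vect{x}^\top H_\rho\vect{x}=0$ forces $\vect{x}\in\text{span}\{\1_n\}$, and your caveat that the normalization requires $\rho\le\max_j P_{ij}$ (guaranteed by $\rho\le 1/n$) spell out points the paper's proof leaves implicit.
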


Theorem~\ref{thm: positive semidefiniteness} implies that, for any $\rho \ge 0$, $H_{\rho}$ is positive definite on the orthogonal complement $\1_n^{\perp}$ of the kernel. Note that the gradient $\vect{g}$ satisfies $\mathbf{1}_n^\top \vect{g} = 0$, so the Newton system can be efficiently solved using the conjugate gradient (CG) method. In particular, the CG iterates are guaranteed to remain within $\mathbf{1}_n^{\perp}$ as long as the initial iterate is chosen from $\1_n^{\perp}$, a fact that will be established in the next section.

We next characterize the spectral properties of the sparsified Hessian $H_{\rho}$, which are vital for establishing the convergence rate of the overall algorithm.

\begin{theorem}
\label{thm: min and max eigenvalues}
    Let $H_{\rho}$ be the matrix output by Algorithm~\ref{alg: Sparsifying the Hessian matrix} and let $(H_{\rho})|_{\1_n^{\perp}}$ denote its restriction to the subspace $\1_n^{\perp}$, then $\forall \rho \geq 0$,
    $$
    \begin{aligned}
        \lambda_{\min} \left((H_{\rho})|_{\1_n^{\perp}}\right) \geq n a_{i^{\star}} p^2 / \eta, \ \ 
        \lambda_{\max} \left(H_{\rho} \right) \leq 1 / (2\eta),
    \end{aligned}
    $$
    where $p := \min_{j} [P]_{i^{\star}j}$.
\end{theorem}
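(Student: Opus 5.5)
The plan is to write $H_\rho$ as a convex combination of rank-structured "covariance" matrices and bound each quadratic form directly. Let $\vect{p}_i := [P_\rho]_{i\cdot}^\top \in \R^n$ denote the $i$-th row of $P_\rho$. By construction in Algorithm~\ref{alg: Sparsifying the Hessian matrix}, every $\vect{p}_i$ is a probability vector: nonnegative entries summing to $1$, since the normalization preserves row sums. Then
$$
H_\rho = \frac{1}{\eta}\sum_{i=1}^m a_i \left(\diag{\vect{p}_i} - \vect{p}_i\vect{p}_i^\top\right).
$$
For any $\vect{x}\in\R^n$ and any probability vector $\vect{q}$ I will use the variance identity
$$
\vect{x}^\top\left(\diag{\vect{q}}-\vect{q}\vect{q}^\top\right)\vect{x} = \sum_j q_j x_j^2 - \Big(\sum_j q_j x_j\Big)^2 = \frac12\sum_{j,l} q_j q_l (x_j-x_l)^2 .
$$
This identity exhibits each term as nonnegative, which is the fact already underlying Theorem~\ref{thm: positive semidefiniteness}.

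\textbf{Upper bound.} The quadratic form above is the variance of a random variable taking value $x_j$ with probability $q_j$. Such a variance is at most $(\max_j x_j - \min_l x_l)^2/4$. For any pair of indices $j,l$ we have $(x_j-x_l)^2 \le 2(x_j^2+x_l^2) \le 2\|\vect{x}\|^2$. Hence each term satisfies $\vect{x}^\top(\diag{\vect{p}_i}-\vect{p}_i\vect{p}_i^\top)\vect{x} \le \|\vect{x}\|^2/2$. Summing with the weights $a_i$, which satisfy $\sum_i a_i = 1$, gives $\vect{x}^\top H_\rho \vect{x} \le \|\vect{x}\|^2/(2\eta)$. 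Since $H_\rho$ is symmetric, this yields $\lambda_{\max}(H_\rho)\le 1/(2\eta)$.

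\textbf{Lower bound on $\1_n^\perp$.} All terms in the sum are positive semidefinite, so I will drop every term with $i\neq i^\star$. The row $i^\star$ is left unsparsified by the algorithm, so $\vect{p}_{i^\star}$ coincides with the row $P_{i^\star\cdot}$, and all of its entries are at least $p$. Using the pairwise form of the identity,
$$
\vect{x}^\top H_\rho\vect{x} \ge \frac{a_{i^\star}}{2\eta}\sum_{j,l}P_{i^\star j}P_{i^\star l}(x_j-x_l)^2 \ge \frac{a_{i^\star}p^2}{2\eta}\sum_{j,l}(x_j-x_l)^2 .
$$
Expanding the last sum gives $\sum_{j,l}(x_j-x_l)^2 = 2n\|\vect{x}\|^2 - 2(\1_n^\top\vect{x})^2$. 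For $\vect{x}\in\1_n^\perp$ this equals $2n\|\vect{x}\|^2$, so $\vect{x}^\top H_\rho\vect{x}\ge n a_{i^\star}p^2\|\vect{x}\|^2/\eta$. Because $H_\rho$ maps $\1_n^\perp$ into itself (it is symmetric with $\1_n$ in its kernel), the Rayleigh quotient bound gives $\lambda_{\min}\big((H_\rho)|_{\1_n^\perp}\big)\ge n a_{i^\star}p^2/\eta$.

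There is no real obstacle here. The only points needing care are the following:
\begin{itemize}
\item Checking that sparsified rows remain probability vectors, so that the variance identity applies to every term.
\item Noting that $p>0$, because $X^k$ stays entrywise positive under the Bregman updates, and that $p$ is taken from the unsparsified row $i^\star$; this is exactly why the algorithm keeps that row intact.
\end{itemize}
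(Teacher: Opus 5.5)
Your proof is correct. Your lower bound is essentially the paper's, but your upper bound takes a different route.

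For the lower bound, the paper writes $\vect{z}^\top H_\rho\vect{z}=\frac{1}{2\eta}\sum_{j,k}W_{jk}(z_j-z_k)^2$ with $W=P_\rho^\top\diag{\vect{a}}P_\rho$. It then uses $W_{jk}\ge a_{i^\star}[P_\rho]_{i^\star j}[P_\rho]_{i^\star k}\ge a_{i^\star}p^2$, which is equivalent to your discarding the rows $i\neq i^\star$. It finishes with the same identity $\sum_{j,k}(z_j-z_k)^2=2n\|\vect{z}\|^2$ on $\1_n^\perp$.

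For the upper bound, you bound each row covariance $\diag{\vect{p}_i}-\vect{p}_i\vect{p}_i^\top$ separately. You use Popoviciu's inequality $\mathrm{Var}\le(\max-\min)^2/4$ together with $(\max_j x_j-\min_l x_l)^2\le 2\|\vect{x}\|^2$, and then average with the weights $a_i$. The paper instead treats $\eta H_\rho=\diag{W\1_n}-W$ as a graph Laplacian on the aggregated weights and uses a Gershgorin-type degree bound. It shows $\sum_{k\neq j}W_{jk}=\sum_i a_i[P_\rho]_{ij}(1-[P_\rho]_{ij})\le 1/4$, combines this with $(z_j-z_k)^2\le 2(z_j^2+z_k^2)$, and obtains $\lambda_{\max}(H_\rho)\le \frac{2}{\eta}\max_j\sum_{k\neq j}W_{jk}\le \frac{1}{2\eta}$.

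Your per-row variance argument is cleaner and shows directly that the per-row constant $1/2$ is sharp. A row with two entries equal to $1/2$, paired with $\vect{x}\propto\vect{e}_1-\vect{e}_2$, attains it. The paper's intermediate quantity $\frac{2}{\eta}\max_j\sum_i a_i[P_\rho]_{ij}(1-[P_\rho]_{ij})$ has a different advantage: it is data-dependent and can be far below $1/(2\eta)$ when the rows of $P_\rho$ are nearly one-hot, which is the regime sparsification targets.

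On your two caveats: both proofs equally rely on each sparsified row summing to one. That requires every row $i\neq i^\star$ to keep some entry $\ge\rho$, which is automatic for $\rho\le 1/n$ because a stochastic row has an entry at least $1/n$. Also, $p>0$ is not actually needed, since for $p=0$ the lower bound reduces to positive semidefiniteness.
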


These bounds ensure that $H_{\rho}$ is well-conditioned on the feasible subspace, providing stability for the Newton update and reliable convergence behavior across different sparsification levels.

To address the second question regarding accuracy, we establish a quantitative bound on the approximation error between the exact and sparsified Hessians.

\begin{theorem}
\label{thm: error bound for hessian matrix}
Let $H_{\rho}$ be the matrix output by Algorithm~\ref{alg: Sparsifying the Hessian matrix}, then
$$
\|H - H_{\rho}\| \leq \frac{6mn\|\vect{a}\|_{\infty}}{\eta} \rho.
$$
\end{theorem}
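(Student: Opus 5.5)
We write $E := P - P_\rho$ and bound $\|H - H_\rho\|$ by controlling each piece of the Hessian difference through $E$. By the definitions in \eqref{equ: grad and hess} and Algorithm~\ref{alg: Sparsifying the Hessian matrix},
\begin{equation*}
\eta (H - H_\rho) = \diag{E^\top \vect{a}} - \left(P^\top \diag{\vect{a}} P - P_\rho^\top \diag{\vect{a}} P_\rho\right).
\end{equation*}
The second term is split as
\begin{equation*}
P^\top \diag{\vect{a}} P - P_\rho^\top \diag{\vect{a}} P_\rho = E^\top \diag{\vect{a}} P + P_\rho^\top \diag{\vect{a}} E .
\end{equation*}
This reduces the problem to a bound on $E$ in a convenient norm, together with crude bounds on $\|P\|$ and $\|P_\rho\|$.

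\textbf{Step 1: entrywise control of $E$.} Row $i^\star$ of $E$ vanishes. For $i \ne i^\star$, put $s_i = \sum_k P_{ik}\id{P_{ik}\ge\rho}$. The discarded mass is $1 - s_i \le n\rho$, since at most $n$ entries, each below $\rho$, are removed. Two kinds of entries then need bounding.
\begin{itemize}
\item For a dropped entry, $|E_{ij}| = P_{ij} < \rho$.
\item For a kept entry, $|E_{ij}| = P_{ij}(1/s_i - 1) = P_{ij}(1-s_i)/s_i$.
\end{itemize}
Summing over $j$, the absolute row sum of $E$ is therefore at most
\begin{equation*}
(1-s_i) + (1-s_i) = 2(1-s_i) \le 2n\rho .
\end{equation*}
The kept entries contribute $\sum_j P_{ij}(1-s_i)/s_i = 1-s_i$ because $\sum_j P_{ij}\id{P_{ij}\ge\rho} = s_i$; the dropped entries contribute $1-s_i$. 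Hence $\|E\|_\infty \le 2n\rho$. Each column sum is at most $m$ times the largest entry. The kept entries need $s_i>0$ for this, so we assume the row maximum, which is at least $1/n$, survives the threshold; this holds whenever $\rho \le 1/n$, and for $\rho$ larger than $1/n$ the claimed bound is trivial anyway, as explained below. Using a crude entrywise bound $|E_{ij}| \le 2n\rho$ then gives $\|E\|_1 \le 2mn\rho$. We then use $\|E\| \le \sqrt{\|E\|_1\|E\|_\infty}$, or more simply $\|E\| \le \|E\|_F \le$ a bound of order $\sqrt{m}\,n\rho$, and aim for the product form $mn\rho$ with a small constant.

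\textbf{Step 2: combine.} Both $P$ and $P_\rho$ are row-stochastic, so $\|P\|_\infty = \|P_\rho\|_\infty = 1$ and $\|P\|_1, \|P_\rho\|_1 \le m$. The three terms are bounded as follows.
\begin{itemize}
\item The diagonal term satisfies $\|\diag{E^\top\vect{a}}\| = \|E^\top \vect{a}\|_\infty \le \|\vect{a}\|_\infty \|E\|_1 \le 2mn\rho\|\vect{a}\|_\infty$.
\item For $E^\top \diag{\vect{a}} P$, we use that this matrix is symmetric-bounded by $\sqrt{\|\cdot\|_1\|\cdot\|_\infty}$. This gives at most $\|\vect{a}\|_\infty \sqrt{\|E^\top\|_1\|P\|_1 \cdot \|E^\top\|_\infty\|P\|_\infty} = \|\vect{a}\|_\infty\sqrt{\|E\|_\infty\, m\, \|E\|_1}$.
\item The term $P_\rho^\top \diag{\vect{a}} E$ is handled the same way.
\end{itemize}
With $\|E\|_\infty \le 2n\rho$ and $\|E\|_1 \le 2mn\rho$, each of the two cross terms is at most $\|\vect{a}\|_\infty \sqrt{2n\rho \cdot m \cdot 2mn\rho} = 2mn\rho\|\vect{a}\|_\infty$. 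Summing the three terms gives $6mn\rho\|\vect{a}\|_\infty$, and dividing by $\eta$ yields the claim.

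\textbf{Main obstacle.} The delicate point is the column-sum bound $\|E\|_1 \le 2mn\rho$ in Step 1, in particular the kept-entry bound $P_{ij}(1-s_i)/s_i \le 2n\rho$, which needs $s_i \ge 1/2$. This does not hold in general. We handle it by the trivial regime: if $n\rho > 1/2$, then by Theorem~\ref{thm: min and max eigenvalues} and the analogous bound for $H$ we have $\|H - H_\rho\| \le \|H\| + \|H_\rho\| \le 1/\eta$. This is already below the claimed bound, because $6mn\rho\|\vect{a}\|_\infty \ge 6n\rho > 3$, using $\|\vect{a}\|_\infty \ge 1/m$. When $n\rho \le 1/2$ we have $s_i \ge 1/2$, so $|E_{ij}| \le 2(1-s_i) \le 2n\rho$, and Step 1 goes through.
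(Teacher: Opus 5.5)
Your proof is correct and is essentially the paper's argument. It uses the same row computation $\|E_{i\cdot}\|_1 = 2(1-s_i)\le 2n\rho$ and the same three-term split of $\eta(H-H_\rho)$. It also uses the same $\sqrt{\|\cdot\|_1\|\cdot\|_\infty}$ device; the paper bounds $\|P\|\le\sqrt{m}$ and $\|E\|\le 2\sqrt{m}\,n\rho$ separately, whereas you apply it to the products $E^\top\diag{\vect{a}}P$ and $P_\rho^\top\diag{\vect{a}}E$, and the constants come out identical.

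Your ``main obstacle'' is not real, however. The bound $\|E\|_1\le\sum_i\|E_{i\cdot}\|_1\le 2mn\rho$ follows directly from the row bounds. Equivalently, $P_{ij}\le s_i$ for kept entries gives $|E_{ij}|\le 1-s_i$ without needing $s_i\ge 1/2$. So the case split on $n\rho>1/2$ is unnecessary, though harmless. Your remark that $s_i>0$ (needed for $P_\rho$ to be defined) is guaranteed when $\rho\le 1/n$ is a well-definedness point the paper passes over silently.
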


Theorem~\ref{thm: error bound for hessian matrix} shows that the approximation error increases with the problem size $m, n$ and decreases with the regularization parameter $\eta$. To maintain a small and stable error across different scales, the threshold $\rho$ can be chosen proportional to $\eta / (mn)$. This choice balances sparsity and accuracy while ensuring consistent performance as $m$, $n$, or $\eta$ vary.

The preceding analysis confirms that the Hessian sparsification technique constructed via Algorithm~\ref{alg: Sparsifying the Hessian matrix} yields an approximation $H_{\rho}$ that is both structurally robust and quantitatively accurate. However, even with these sparsification and second-order acceleration techniques, solving the problem~\eqref{opt: semi-dual problem} to high precision in every iteration remains computationally prohibitive. To fully address this efficiency bottleneck, we adopt the inexact Bregman framework proposed by~\cite{yang2022bregman}. This framework allows the subproblems $\eqref{opt: subproblem}$ to be solved approximately while still ensuring global convergence to the optimal OT solution. The details of this inexact criterion and its integration into our method are shown in the next subsection.

\subsection{The proposed algorithm}
\label{subsec: The proposed algorithm}

\begin{algorithm}[t]
\caption{Inexact Bregman Sparse Newton (IBSN)}
\label{alg: IBSN}
\begin{algorithmic}[1]
    \STATE {\bfseries Input:} Initial $X^0 = \vect{a}\vect{b}^\top$, $\vect{\gamma}^{0,0} = \0_n$, $\vect{\zeta}^{0,0} = \0_m$, $\{\mu_k\}_k$, $\eta$, Armijo parameter $\sigma = 10^{-4}, \beta=0.8$.
    \FOR{$k=0,..., K-1$}
    \STATE Solve the dual subproblem~\eqref{opt: two-dual formulation of subproblem} starting from $\vect{\gamma}^{k,0}, \vect{\zeta}^{k,0}$ using Sinkhorn algorithm to obtain a coarse solution $\vect{\gamma}^{k,1}, \vect{\zeta}^{k,1}$. 
    \STATE Update $\vect{\gamma}^{k,1} \gets \vect{\gamma}^{k,1} - \frac{1}{n}\1_n^\top \vect{\gamma}^{k,1}\1_n$. 
    \FOR{$v=1,2,...$}
    \STATE Compute the gradient $\vect{g}^{k,v} =  g^k(\vect{\vect{\gamma}}^{k, v})$ and update the sparsification threshold $\rho \gets \frac{\eta}{mn} \left\|\vect{g}^{k,v}\right\|$.
    \STATE Construct $H^{k,v}_{\rho}$ by Algorithm~\ref{alg: Sparsifying the Hessian matrix}.
    \STATE Compute $\Delta \vect{\gamma}^{k,v} \gets -\left(H^{k,v}_{\rho} + \|\vect{g}^{k,v}\|I\right)^{-1}\vect{g}^{k,v}$.
    \STATE Select the stepsize $t^{k,v}$ by Armijo backtracking. 
    \STATE Compute $\vect{\gamma}^{k, v+1} \gets \vect{\gamma}^{k, v} + t^{k,v} \Delta \vect{\gamma}^{k,v}$.
    \STATE Compute $X^{k,v+1} \gets \diag{\vect{a}} P^k(\vect{\gamma}^{k, v+1})$.
    \IF{$D_{\phi}(\Proj_{\Omega}(X^{k,v+1}), X^{k,v+1}) \leq \mu_k$}
    \STATE Obtain $X^{k+1} \gets X^{k, v+1}$, $\vect{\gamma}^{k+1,0} \gets \vect{\gamma}^{k, v+1}$.
    \STATE Calculate $\vect{\zeta}^{k+1, 0}$ by~\eqref{equ: best zeta given gamma} and \textbf{break}.
    \ENDIF
    \ENDFOR
    \ENDFOR
    \STATE {\bfseries Output:} $X^{K}$.
\end{algorithmic}    
\end{algorithm}

We propose Algorithm~\ref{alg: IBSN}, the Inexact Bregman Sparse Newton, an inexact framework that efficiently solves the OT problem~$\eqref{opt: problem}$ using a Newton-type approach with Hessian sparsification.

At each outer iteration $k$, the algorithm starts with Sinkhorn algorithm to approximately solve the subproblem~\eqref{opt: two-dual formulation of subproblem}. This generates a coarse estimate $\vect{\gamma}^{k,1}$ that serves as a high-quality warm start for the subsequent Newton steps.

Within each inner iteration $v$, the algorithm computes the gradient $\vect{g}^{k,v}$ of $\lyp^k$ and adaptively updates the sparsification threshold parameter $\rho$. Specifically, when the current iterate $\vect{\gamma}^{k,v}$ is far away from the optimum with large $\|\vect{g}^{k,v}\|$, a larger $\rho$ is chosen. This yields a sparser Hessian $H_{\rho}^{k,v}$, which significantly reduces the computational burden of solving for the Newton direction $\Delta \vect{\gamma}^{k,v}$. As $\vect{\gamma}^{k,v}$ approaches the optimum with smaller $\|\vect{g}^{k,v}\|$, the threshold $\rho$ progressively decreases. Therefore, $H_{\rho}^{k,v}$ becomes a more accurate approximation of the true Hessian $H^{k,v}$, achieving a faster local convergence rate.

To stabilize the Newton update and maintain a descent direction, a shift parameter proportional to the gradient norm, $\|\vect{g}^{k,v}\|$, is introduced into the linear system. This modification ensures numerical stability and guarantees that $\Delta \vect{\gamma}^{k,v}$ is a descent direction for $\lyp^k$, as rigorously shown in Section~\ref{sec: Theoretical Analysis}. This shift naturally diminishes as $\vect{\gamma}^{k,v}$ approaches the optimum. Such similar modifications have been adopted in recent Newton-type OT solvers~\cite{tang2024safe,wang2025sparse}.

After determining the step size $t^{k,v}$ through Armijo backtracking, the dual variable is updated, and the corresponding primal transport plan is updated as $X^{k,v+1} = \diag{\vect{a}} P^k(\vect{\gamma}^{k, v+1})$. To improve efficiency, the algorithm employs an inexact inner stopping criterion based on the Bregman divergence $D_{\phi}$ and the projection operator $\Proj_{\Omega}$, which is the key component of the inexact framework in~\cite{yang2022bregman}. Specifically, the inner loop terminates early once the condition 
$$
D_{\phi}(\Proj_{\Omega}(X^{k,v+1}), X^{k,v+1}) \leq \mu_k
$$
is satisfied, where $\mu_k$ represents the inexactness tolerance. After this, $X^{k,v+1}$ and $\vect{\gamma}^{k,v+1}$ are propagated as the initialization for the next outer iteration. This adaptive inexactness substantially reduces computational cost while preserving theoretical convergence guarantees. In practice, the rounding procedure proposed in~\cite{altschuler2017near} can also be used as an efficient approximation of $\Proj_{\Omega}$, further improving scalability in large-scale settings (see Appendix~\ref{app: Converting a matrix to a transportation plan}).

In summary, the IBSN algorithm unifies the efficiency of inexact Bregman updates with the acceleration of sparse Newton refinements to achieve high accuracy and scalability when solving large-scale OT problems.

\section{Theoretical Analysis}
\label{sec: Theoretical Analysis}

This section presents the convergence analysis of the proposed IBSN framework. We first establish the inner convergence of the Newton iterations with sparsified Hessians, followed by proving the global convergence of the outer iterations to the exact optimal transport solution. 

\subsection{Inner convergence analysis}

This subsection establishes the crucial convergence properties of the inner iterations within the IBSN framework, demonstrating that our sparse Newton steps successfully find the unique solution to each subproblem. We start with the following lemma, which validates our approach by confirming that the entire optimization sequence remains confined to this desirable subspace.

\begin{lemma}
\label{lem: sequence restrict in the subspace}
Let $\{\vect{\gamma}^{k,v}\}$ be the sequence generated by Algorithm~\ref{alg: IBSN} with $\1_n^\top\vect{\gamma}^{0,0} = 0$, then $\1_n^\top\vect{\gamma}^{k,v} = 0$ for $\forall k, v$.
\end{lemma}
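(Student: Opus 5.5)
The plan is a double induction over the outer index $k$ and the inner index $v$. It suffices to show that each operation in Algorithm~\ref{alg: IBSN} that modifies $\vect{\gamma}$ maps $\1_n^\perp$ into itself. There are four such operations: the Sinkhorn warm start followed by recentering (lines 3--4), the Newton update (line 10), the hand-off $\vect{\gamma}^{k+1,0}\gets\vect{\gamma}^{k,v+1}$, and the base case $\vect{\gamma}^{0,0}=\0_n$. We read the index $v$ as $v\ge 1$ for the Newton iterates, together with $\vect{\gamma}^{k,0}$ inherited from the previous outer loop.

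\textbf{Recentering.} The Sinkhorn output $\vect{\gamma}^{k,1}$ may have arbitrary mean, but line 4 replaces it with $\vect{\gamma}^{k,1}-\frac1n(\1_n^\top\vect{\gamma}^{k,1})\1_n$. This gives
$$\1_n^\top\vect{\gamma}^{k,1}-\tfrac1n(\1_n^\top\vect{\gamma}^{k,1})\,n=0.$$
Hence every inner loop starts in $\1_n^\perp$, independently of $\vect{\gamma}^{k,0}$. The value $\vect{\gamma}^{k+1,0}=\vect{\gamma}^{k,v+1}$ is zero-mean by the inner induction, and $\vect{\gamma}^{0,0}=\0_n$ by assumption.

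\textbf{Newton step.} We show that $\1_n^\top\Delta\vect{\gamma}^{k,v}=0$; then $\vect{\gamma}^{k,v+1}=\vect{\gamma}^{k,v}+t^{k,v}\Delta\vect{\gamma}^{k,v}$ stays in $\1_n^\perp$ for any step size $t^{k,v}$. There are three facts.
\begin{enumerate}
\item The gradient is zero-mean. Since $P^k(\vect{\gamma})$ has unit row sums by \eqref{equ: p update}, $\1_n^\top g^k(\vect{\gamma})=\vect{a}^\top P^k(\vect{\gamma})\1_n-\1_n^\top\vect{b}=\1_m^\top\vect{a}-1=0$.
\item $H_\rho\1_n=\0_n$. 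By Theorem~\ref{thm: positive semidefiniteness}, $\1_n\in\ker(H_\rho)$. Directly, $P_\rho$ also has unit row sums, so $P_\rho^\top\diag{\vect{a}}P_\rho\1_n=P_\rho^\top\vect{a}=\diag{P_\rho^\top\vect{a}}\1_n$.
\item $M:=H_\rho+\|\vect{g}\|I$ leaves $\1_n^\perp$ invariant. $M$ is symmetric and $M\1_n=\|\vect{g}\|\1_n$, so for $\vect{u}\perp\1_n$ we get $\1_n^\top M\vect{u}=(M\1_n)^\top\vect{u}=\|\vect{g}\|\1_n^\top\vect{u}=0$.
\end{enumerate}

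If $\vect{g}^{k,v}=\0$, then $\Delta\vect{\gamma}^{k,v}=\0$ (the method is stationary) and there is nothing to prove. Otherwise $\|\vect{g}\|>0$ and $H_\rho\succeq0$, so $M\succ0$ is invertible. By fact 3, $M$ maps the finite-dimensional space $\1_n^\perp$ injectively, hence bijectively, onto itself. Therefore $M^{-1}\vect{g}\in\1_n^\perp$, that is, $\1_n^\top\Delta\vect{\gamma}^{k,v}=0$. Equivalently, $\1_n$ is an eigenvector of the symmetric matrix $M^{-1}$, so $\1_n^\top M^{-1}\vect{g}=\|\vect{g}\|^{-1}\1_n^\top\vect{g}=0$.

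The main obstacle is only bookkeeping. We must handle the degenerate case $\vect{g}=\0$, where the shift vanishes and $M=H_\rho$ is singular, and we must note that the Sinkhorn phase itself is not mean-preserving, which is why line 4 is needed. If the Newton system is instead solved inexactly by CG from $\0$, the same invariance holds: every Krylov vector $M^j\vect{g}$ lies in $\1_n^\perp$, so the iterates remain there.
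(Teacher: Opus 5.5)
Your proposal is correct and is essentially the paper's argument: both use $\1_n^\top\vect{g}=0$, $H_\rho\1_n=\0_n$, the symmetry of $H_\rho$, and $\|\vect{g}\|>0$ to conclude $\1_n^\top\Delta\vect{\gamma}=0$. The paper does this by left-multiplying the Newton system by $\1_n^\top$, which is the same computation as your ``equivalently'' eigenvector argument for $M^{-1}$. Your write-up is more complete than the paper's, which treats only the Newton step and omits three points you cover: the recentering in line 4 (needed because the Sinkhorn warm start does not preserve the mean), the outer hand-off $\vect{\gamma}^{k+1,0}\gets\vect{\gamma}^{k,v+1}$, and the degenerate case $\vect{g}=\0_n$.
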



Lemma~\ref{lem: sequence restrict in the subspace} ensures that the convergence analysis can be safely restricted to the subspace $\1_n^\perp$, effectively eliminating the redundant degree of freedom and restoring the strong convexity of $\lyp^k$.

We then establish that the inner iterates $\vect{\gamma}^{k,v}$ generated by Algorithm~\ref{alg: IBSN} converge to the unique global minimizer of $\lyp^k$ within the subspace $\1_n^\perp$.

\begin{theorem}
\label{thm:inner-convergence}
For a fixed outer $k$-th iteration, let $\{\vect{\gamma}^{k,v}\}_v$ be the sequence generated by Algorithm~\ref{alg: IBSN} with $\1_n^\top\vect{\gamma}^{k,0} = 0$, then the following holds:
\begin{enumerate}[(i)]
\item $\|\vect{g}^{k,v}\| \to 0$ as $v\to\infty$.
\item $\vect{\gamma}^{k,v} \to \vect{\gamma}^{k, \star}$, where $\vect{\gamma}^{k, \star}$ is the unique minimizer of $\lyp^k$ in the slice $\{{\vect{\gamma} : \1_n^\top \vect{\gamma} = 0}\}$.
\end{enumerate}
\end{theorem}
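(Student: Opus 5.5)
We would prove the statement with the standard descent-method argument for Armijo line search: sufficient decrease, summability of the decrease, and then strong convexity on the slice. The inner loop is for fixed $k$, so we write $\lyp=\lyp^k$. By Lemma~\ref{lem: sequence restrict in the subspace} all iterates stay in $S:=\{\vect{\gamma}:\1_n^\top\vect{\gamma}=0\}$.

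\textbf{Step 1 (level set and curvature).} First we show that the sublevel set $\mathcal{S}_0=\{\vect{\gamma}\in S:\lyp(\vect{\gamma})\le\lyp(\vect{\gamma}^{k,1})\}$ is compact. The function $\lyp$ is convex, invariant under adding multiples of $\1_n$, and by Lemma~\ref{lem: semi-dual solution} it has a minimizer. Using $\vect{b}>0$ and $X^k>0$, one checks that $\lyp(\vect{\gamma})\to\infty$ along any direction $\vect{d}\in S\setminus\{\0\}$: such a $\vect{d}$ has a positive entry, so the log-sum-exp term grows like $\max_j d_j$, while the linear term $-\vect{b}^\top\vect{d}$ grows strictly more slowly, since $\vect{b}^\top\vect{d}<\max_j d_j$. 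On the compact set $\mathcal{S}_0$ the entries of $P^k(\vect{\gamma})$ are bounded below by some $\underline p>0$. The exact Hessian $H$ in~\eqref{equ: grad and hess} has the same form as $H_\rho$ with $\rho=0$, so Theorem~\ref{thm: min and max eigenvalues} gives $\lambda_{\min}(H|_{\1_n^\perp})\ge \kappa:=na_{i^\star}\underline p^2/\eta>0$ on $\mathcal{S}_0$. Hence $\lyp$ is $\kappa$-strongly convex on $\mathcal{S}_0$ (convexity of the set follows from convexity of $\lyp$) and $L$-smooth with $L=1/(2\eta)$.

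\textbf{Step 2 (descent direction and Armijo).} Let $M_v=H_\rho^{k,v}+\|\vect{g}^{k,v}\|I$. By Theorems~\ref{thm: positive semidefiniteness} and~\ref{thm: min and max eigenvalues}, on $\1_n^\perp$ we have $\|\vect{g}^{k,v}\|\le\lambda(M_v)\le 1/(2\eta)+\|\vect{g}^{k,v}\|$. The gradient is bounded on $\mathcal{S}_0$, say $\|\vect{g}\|\le G$. Since $\vect{g}^{k,v}\in\1_n^\perp$ and $M_v$ maps $\1_n^\perp$ to itself, we get $\Delta\vect{\gamma}^{k,v}\in\1_n^\perp$ and
\[
-\vect{g}^\top\Delta\vect{\gamma}\ge \frac{\|\vect{g}\|^2}{1/(2\eta)+G},\qquad \|\Delta\vect{\gamma}\|\le \frac{\|\vect{g}\|}{\|\vect{g}\|}=1 .
\]
The standard Armijo argument with $L$-smoothness then yields $t^{k,v}\ge \min\{1,\,2\beta(1-\sigma)(-\vect{g}^\top\Delta\vect{\gamma})/(L\|\Delta\vect{\gamma}\|^2)\}$. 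To lower-bound this ratio we use $\|\Delta\vect{\gamma}\|\le\|\vect{g}\|/\lambda_{\min}(M_v)$ together with $\lambda_{\min}(M_v)\ge\|\vect{g}\|$. This gives the ratio $\gtrsim \|\vect{g}\|^2/(\|\vect{g}\|^2)\cdot\text{const}$, which is bounded below by a positive constant.

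\textbf{Main obstacle.} A nonvanishing lower bound $\lambda_{\min}(M_v)\ge c>0$, uniform in $v$, is the delicate point. Theorem~\ref{thm: min and max eigenvalues} supplies it: $\lambda_{\min}(H_\rho|_{\1_n^\perp})\ge na_{i^\star}p^2/\eta\ge\kappa$, because row $i^\star$ is never sparsified and its entries are at least $\underline p$ on $\mathcal{S}_0$. With this bound, $\|\Delta\vect{\gamma}\|\le \|\vect{g}\|/\kappa$ and $-\vect{g}^\top\Delta\vect{\gamma}\ge c_1\|\vect{g}\|^2$, so the step sizes are uniformly bounded below, and each iteration decreases $\lyp$ by at least $c_2\|\vect{g}^{k,v}\|^2$. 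The iterates remain in $\mathcal{S}_0$ because $\lyp$ is monotone along them.

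\textbf{Step 3 (conclusion).} Summing the decreases and using that $\lyp$ is bounded below gives $\sum_v\|\vect{g}^{k,v}\|^2<\infty$, which proves (i). For (ii), strong convexity on $\mathcal{S}_0\subset S$ implies the minimizer on $S$ is unique; call it $\vect{\gamma}^{k,\star}$. Its gradient is orthogonal to $S$, and since $\1_n^\top\vect{g}=0$ identically, the gradient vanishes there. Strong convexity also gives $\kappa\|\vect{\gamma}^{k,v}-\vect{\gamma}^{k,\star}\|\le\|\vect{g}^{k,v}\|\to0$, which proves (ii).
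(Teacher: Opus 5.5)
Your proof is correct and follows the paper's argument essentially step for step: compact level set, a uniform lower bound on $\lambda_{\min}\big((H_\rho)|_{\1_n^\perp}\big)$ from the never-sparsified row $i^\star$, a uniform Armijo step, summability of $\|\vect{g}^{k,v}\|^2$, and strong convexity on the slice for (ii). Your recession-direction argument for compactness within the slice is more careful than the paper's, whose level set in $\R^n$ is unbounded along $\1_n$ since $\lyp^k(\vect{\gamma}+c\1_n)=\lyp^k(\vect{\gamma})$. One slip to delete: in Step~2 the shift bound $\lambda_{\min}(M_v)\ge\|\vect{g}^{k,v}\|$ only gives a ratio $\ge \|\vect{g}^{k,v}\|^2/(1/(2\eta)+G)$, which vanishes, so the uniform step-size bound rests on the $\kappa$ bound you supply in the next paragraph.
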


Next, we examine the highly favorable local convergence rate achieved by the sparse Newton iterations.

\begin{theorem}
\label{thm:sparse-newton-rate}
For a fixed outer $k$-th iteration, let $\vect{\gamma}^{k, \star}$ denote the unique solution in the slice $\{\vect{\gamma}: \1_n^\top\vect{\gamma}=0\}$ of $\lyp^k$. 
Assume that $H^{k,v}$ is Lipschitz continuous with constant $L_H$ in a neighborhood of $\vect{\gamma}^{k, \star}$, and that $t^{k,v}=1$ is accepted, then there exists a constant $L > 0$ such that
\begin{equation}
\label{eq:error-recursion}
\|\vect{\gamma}^{k, v+1} - \vect{\gamma}^{k, \star}\| \leq L \|\vect{\gamma}^{k, v} - \vect{\gamma}^{k, \star}\|^2.
\end{equation}

\end{theorem}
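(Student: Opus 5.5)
Writing $\vect{\gamma}^\star=\vect{\gamma}^{k,\star}$, $\vect{e}^v=\vect{\gamma}^{k,v}-\vect{\gamma}^\star$, $\vect{g}=\vect{g}^{k,v}$, $H=H^{k,v}=H^k(\vect{\gamma}^{k,v})$ and $M=H^{k,v}_\rho+\|\vect{g}\|I$, the plan is the standard inexact-Newton argument carried out on the subspace $\1_n^\perp$. By Lemma~\ref{lem: sequence restrict in the subspace} we have $\vect{e}^v\in\1_n^\perp$. Since $\1_n^\top\vect{g}=0$, and $M$ maps $\1_n^\perp$ into itself (because $H_\rho\1_n=\0$ by Theorem~\ref{thm: positive semidefiniteness}), the step $\Delta\vect{\gamma}^{k,v}=-M^{-1}\vect{g}$ also lies in $\1_n^\perp$. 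With unit step,
\[
\vect{e}^{v+1}=\vect{e}^v-M^{-1}\vect{g}=M^{-1}\bigl[(M-H)\vect{e}^v-\bigl(\vect{g}-H\vect{e}^v\bigr)\bigr],
\]
and we use $g^k(\vect{\gamma}^\star)=\0$. So the proof reduces to three estimates, all on $\1_n^\perp$.

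\textbf{(a) Bound on $M^{-1}$.} By Theorem~\ref{thm: min and max eigenvalues}, $\|M^{-1}|_{\1_n^\perp}\|\le \eta/(n a_{i^\star}p^2)$, where $p=\min_j[P^k(\vect{\gamma}^{k,v})]_{i^\star j}$. In a bounded neighborhood of $\vect{\gamma}^\star$, the quantity $p$ is bounded below by a positive constant $\underline{p}$, because $P^k$ is continuous and strictly positive (each $X^k_{ij}>0$). This gives a uniform constant $c_1=\eta/(n a_{i^\star}\underline{p}^2)$.

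\textbf{(b) Taylor remainder.} Since $\vect{g}=\int_0^1 H^k(\vect{\gamma}^\star+s\vect{e}^v)\vect{e}^v\,ds$, the Lipschitz assumption gives $\|\vect{g}-H\vect{e}^v\|\le \tfrac{L_H}{2}\|\vect{e}^v\|^2$.

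\textbf{(c) Approximation error $\|M-H\|$.} By Theorem~\ref{thm: error bound for hessian matrix} with the algorithm's choice $\rho=\eta\|\vect{g}\|/(mn)$,
\[
\|H-H_\rho\|\le 6\|\vect{a}\|_\infty\|\vect{g}\|, \qquad\text{so}\qquad \|M-H\|\le(6\|\vect{a}\|_\infty+1)\|\vect{g}\|.
\]
Moreover $\|\vect{g}\|=\|g^k(\vect{\gamma}^{k,v})-g^k(\vect{\gamma}^\star)\|\le \lambda_{\max}(H)\|\vect{e}^v\|\le \|\vect{e}^v\|/(2\eta)$, using the mean value theorem and the bound $\lambda_{\max}\le 1/(2\eta)$ from Theorem~\ref{thm: min and max eigenvalues} applied with $\rho=0$. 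Hence $\|(M-H)\vect{e}^v\|\le c_2\|\vect{e}^v\|^2$ with $c_2=(6\|\vect{a}\|_\infty+1)/(2\eta)$.

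Combining (a)--(c) gives \eqref{eq:error-recursion} with $L=c_1\bigl(c_2+L_H/2\bigr)$.

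The main obstacle is step (a). The constant $p$ depends on the current iterate, so we must ensure the iterates stay in a fixed neighborhood on which $p\ge\underline{p}$. I would handle this by taking a ball around $\vect{\gamma}^\star$ inside the Lipschitz neighborhood. By continuity, $P^k$ stays bounded away from zero on that ball, which yields uniform constants. If the iterates ever leave the ball, the recursion with $\|\vect{e}^v\|<1/L$ keeps them inside, so the estimate is self-consistent.
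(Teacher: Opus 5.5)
Your proposal is correct and follows the paper's proof essentially step for step: the same decomposition $\vect{e}^{v+1}=M^{-1}[(M-H)\vect{e}^v-(\vect{g}-H\vect{e}^v)]$, the same Taylor remainder bound, and the same use of Theorem~\ref{thm: error bound for hessian matrix} with $\rho=\eta\|\vect{g}\|/(mn)$ together with $\|\vect{g}\|\le\|\vect{e}^v\|/(2\eta)$, which gives the same constant $L$. The only difference is how you get the uniform lower bound on $p$: you use continuity on a ball around $\vect{\gamma}^{k,\star}$, while the paper uses the bounded level set $S^{k,0}$ (Lemma~\ref{lem: uniform lower bound for min eigenvalue of hessian}), which contains every iterate by Armijo descent and so makes your self-consistency argument unnecessary.
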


Together, Theorems~\ref{thm:inner-convergence} and~\ref{thm:sparse-newton-rate} establish that the proposed sparse Newton updates converge globally to the unique solution of each subproblem and exhibit quadratic local convergence once sufficiently close to the optimum.
This demonstrates that the sparsified Newton steps preserve the accuracy and convergence speed of the classical method, making this approach feasible for large-scale optimal transport problem.

\subsection{Global convergence}

We now establish the global convergence of the proposed IBSN algorithm. Let $f(X) = \langle C, X \rangle$ and $f^{\star} = \min\{f(X): X \in \Omega\}$. Our convergence proof relies on the principles of the inexact Bregman proximal point algorithm, adapted from the results established in~\cite{yang2022bregman}.

\begin{proposition}
    (\cite{yang2022bregman}) Let $\{X^{k}\}$ be the sequence generated by the Algorithm~\ref{alg: IBSN}. If $ \sum_{k}\mu_k < \infty$, then $f(\Proj_{\Omega}(X^{k})) \rightarrow f^{\star}$ as $k \rightarrow \infty$ and both sequence $\{X^k\}$ and $\{\Proj_{\Omega}(X^k)\}$ converge to the same limit, which is an optimal solution of problem~\eqref{opt: problem}.
\end{proposition}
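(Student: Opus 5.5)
The plan is to reduce the proposition to the abstract inexact Bregman proximal point result of~\cite{yang2022bregman}. That result covers iterations of the form~\eqref{opt: subproblem}, where each subproblem is solved only approximately, provided the approximation is certified by a criterion of the type $D_{\phi}(\Proj_{\Omega}(X^{k+1}), X^{k+1}) \leq \mu_k$ with $\sum_k \mu_k < \infty$. Our task is therefore to check three things:
\begin{enumerate}[(a)]
\item every outer iteration of Algorithm~\ref{alg: IBSN} actually terminates;
\item the output $X^{k+1}$ has the structural form their framework assumes;
\item the stopping test in Algorithm~\ref{alg: IBSN} is exactly their inexactness condition.
\end{enumerate}

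For (a), fix $k$. By Lemma~\ref{lem: sequence restrict in the subspace} and Theorem~\ref{thm:inner-convergence}, we have $\vect{\gamma}^{k,v} \to \vect{\gamma}^{k,\star}$. Since $P^k$ is continuous, $X^{k,v+1} = \diag{\vect{a}} P^k(\vect{\gamma}^{k,v+1}) \to \diag{\vect{a}} P^k(\vect{\gamma}^{k,\star}) = X^{k+1}_{\mathrm{exact}}$. The exact point lies in $\Omega$: its row sums equal $\vect{a}$ by construction, and its column sums equal $\vect{b}$ because $g^k(\vect{\gamma}^{k,\star})=\0_n$. Both $\Proj_{\Omega}$ and $D_{\phi}(\cdot,\cdot)$ are continuous on the relevant region, where all entries are positive because $X^k>0$ inductively, starting from $X^0=\vect{a}\vect{b}^\top>0$. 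Hence $D_{\phi}(\Proj_{\Omega}(X^{k,v+1}),X^{k,v+1}) \to D_\phi(X^{k+1}_{\mathrm{exact}},X^{k+1}_{\mathrm{exact}})=0$. Since $\mu_k>0$, the test succeeds after finitely many $v$.

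For (b), note that $X^{k+1}$ is of the form $\diag{\vect{a}}P^k(\vect{\gamma})$. Equivalently, $X^{k+1}_{ij} = X^k_{ij}\exp((\zeta_i+\gamma_j-C_{ij})/\eta)$ with $\vect{\zeta}=\zeta(\vect{\gamma})$ from~\eqref{equ: best zeta given gamma}. Then $\nabla\phi(X^{k+1})-\nabla\phi(X^k) = -(C - \vect{\zeta}\1_n^\top - \1_m\vect{\gamma}^\top)/\eta$, so $X^{k+1}$ is an exact Bregman proximal step for the perturbed linear objective. This is the optimality structure that~\cite{yang2022bregman} exploits, with the error measured through the infeasibility of $X^{k+1}$.

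For (c), we observe that the break condition is verbatim their criterion. Once (a)--(c) hold, their theorem yields $f(\Proj_{\Omega}(X^k))\to f^\star$ and convergence of $\{X^k\}$ and $\{\Proj_{\Omega}(X^k)\}$ to a common optimal solution.

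The main obstacle I expect is (b)/(c): confirming that the hypotheses of~\cite{yang2022bregman} match our setting exactly. Their framework may require, for instance, a projection in the Bregman sense, some boundedness, or positivity of iterates. If a standing assumption differs, I would try first to re-derive their key inequality directly. Using the three-point identity with the above optimality relation, I would aim for
\[
f(\Proj_\Omega(X^{k+1})) - f(X) \leq \eta\left(D_\phi(X,X^k)-D_\phi(X,X^{k+1})\right) + O(\mu_k)
\]
for all $X\in\Omega$. Summability of $\mu_k$ then gives quasi-Fej\'er monotonicity of $D_\phi(X^\star,X^k)$, boundedness, and convergence of the whole sequence by the standard Opial-type argument. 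Finally, Pinsker-type bounds give $\|X^k-\Proj_\Omega(X^k)\|\to0$, so the two sequences share their limit.
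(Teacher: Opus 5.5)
Your proposal is correct and takes the paper's route. The paper gives no argument beyond invoking the inexact Bregman proximal point theorem of~\cite{yang2022bregman}, and your reduction is that same step with the implicit hypotheses actually checked: finite termination of each inner loop via Theorem~\ref{thm:inner-convergence}, and the relation $\log X^{k+1}-\log X^{k}=-(C-\vect{\zeta}\1_n^\top-\1_m\vect{\gamma}^\top)/\eta$, which makes the residual in their inclusion vanish at the feasible point $\Proj_{\Omega}(X^{k+1})$, so only $D_{\phi}(\Proj_{\Omega}(X^{k+1}),X^{k+1})\leq\mu_k$ remains to verify. Your fallback is also sound: the four-point identity gives your inequality with the $O(\mu_k)$ term equal to $\eta\mu_k$, and since $X^k$ and $\Proj_{\Omega}(X^k)$ both have unit mass, Pinsker applies directly.
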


This result establishes the global convergence of IBSN under mild conditions on the inexactness tolerance $\mu_k$. It confirms that the combination of inexact subproblem solving and sparse Newton acceleration does not compromise the overall theoretical accuracy, ensuring that the proposed algorithm converges to the exact optimal transport plan.

\section{Numerical Experiments}
\label{sec: Numerical Experiments}
In this section, we evaluate the performance of the proposed IBSN algorithm through a series of numerical experiments on both synthetic and real datasets and compare its empirical performance with existing state-of-the-art optimal transport solvers: 
1. PINS algorithm~\cite{wu2025pins}; 2. HOT algorithm~\cite{zhang2025hot}; 3. Inexact Bregman framework with Sinkhorn (IBSink)~\cite{yang2022bregman}; 4. IPOT algorithm~\cite{xie2020fast}; 5. Extragradient (ExtraGrad) method~\cite{li2025fast}. All numerical experiments are implemented in Python 3.9.18 on a MacBook Pro running macOS 13.0 with an Apple M2 Pro processor and 16 GB RAM. 


\begin{figure*}[t]
\centering
{
\resizebox*{0.33 \textwidth}{!}{\includegraphics{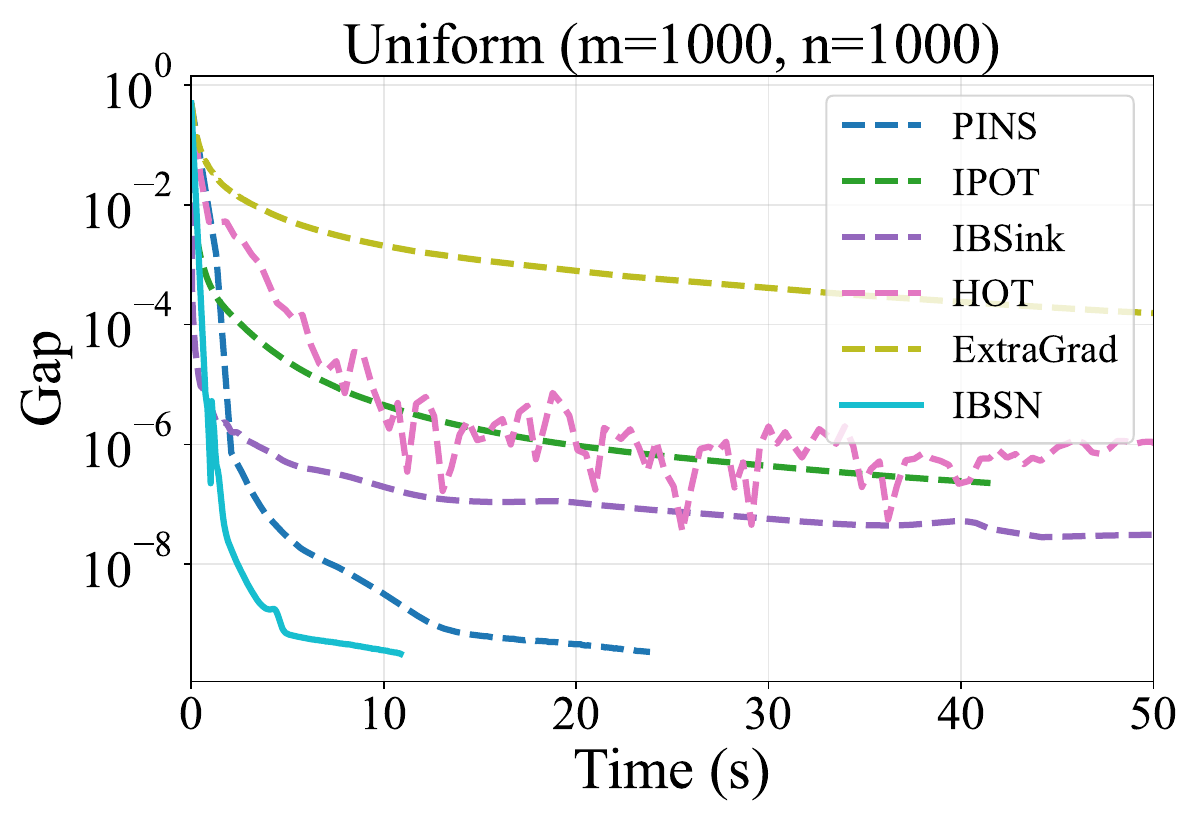}}}\hfill
{
\resizebox*{0.33 \textwidth}{!}{\includegraphics{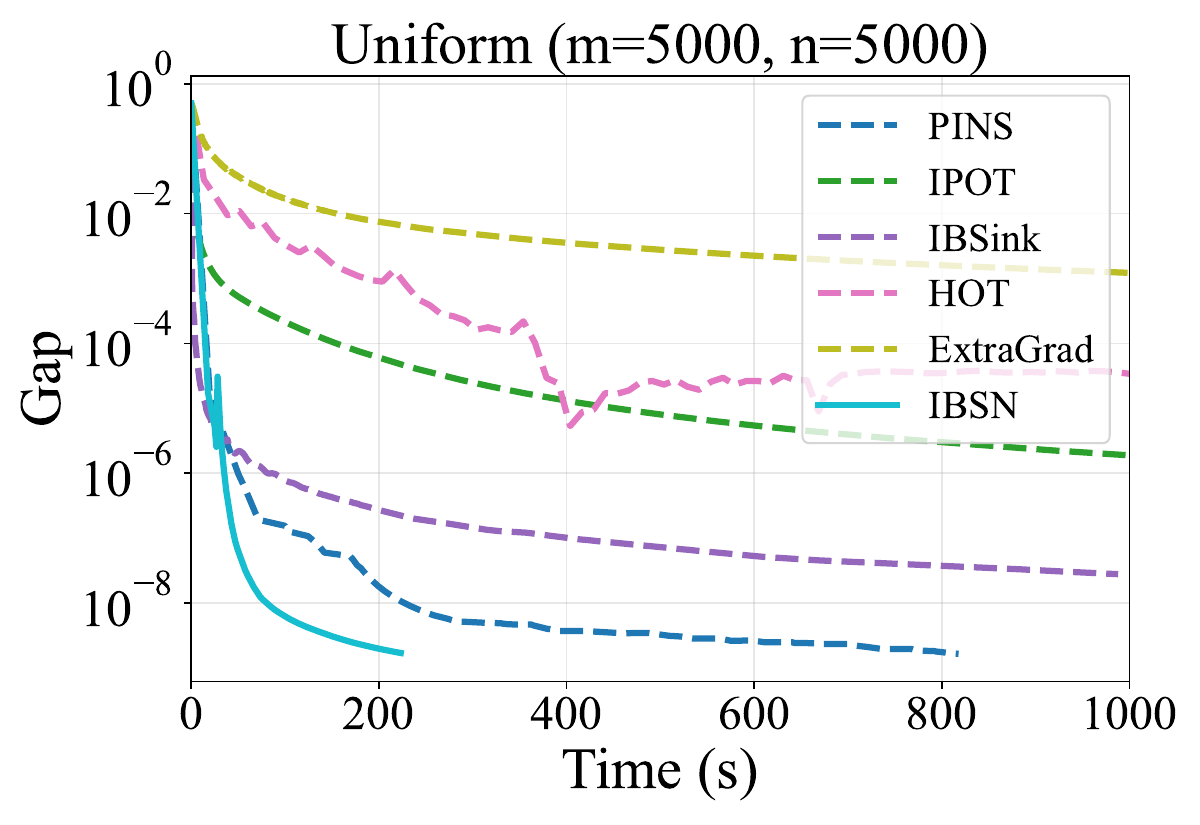}}}\hfill
{
\resizebox*{0.33 \textwidth}{!}{\includegraphics{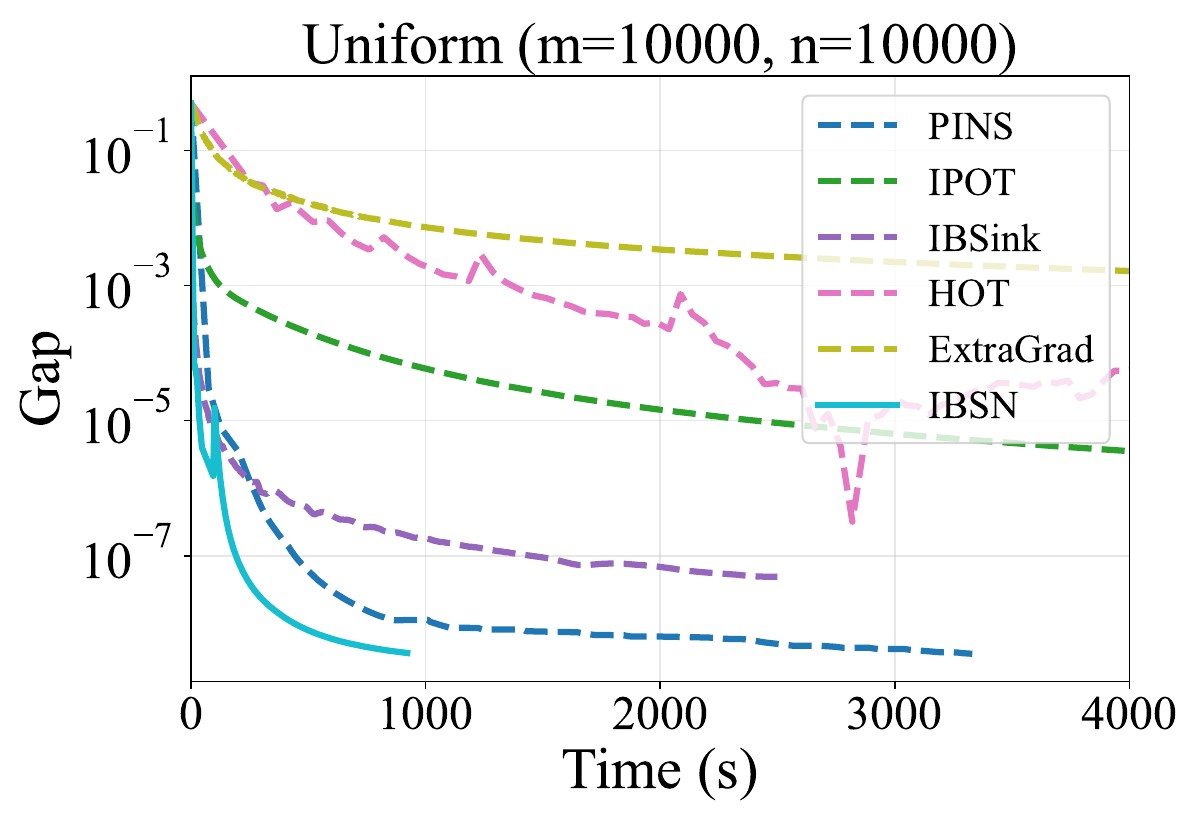}}}
{
\resizebox*{0.33 \textwidth}{!}{\includegraphics{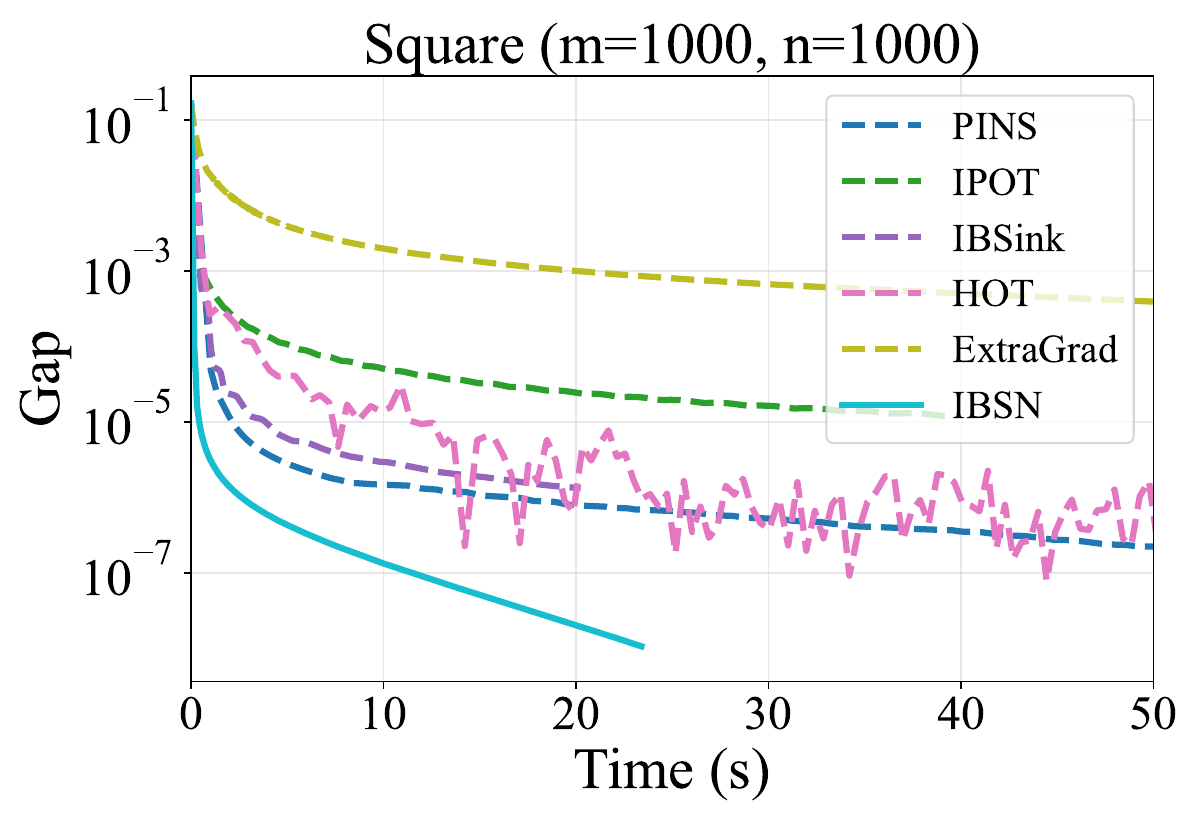}}}\hfill
{
\resizebox*{0.33 \textwidth}{!}{\includegraphics{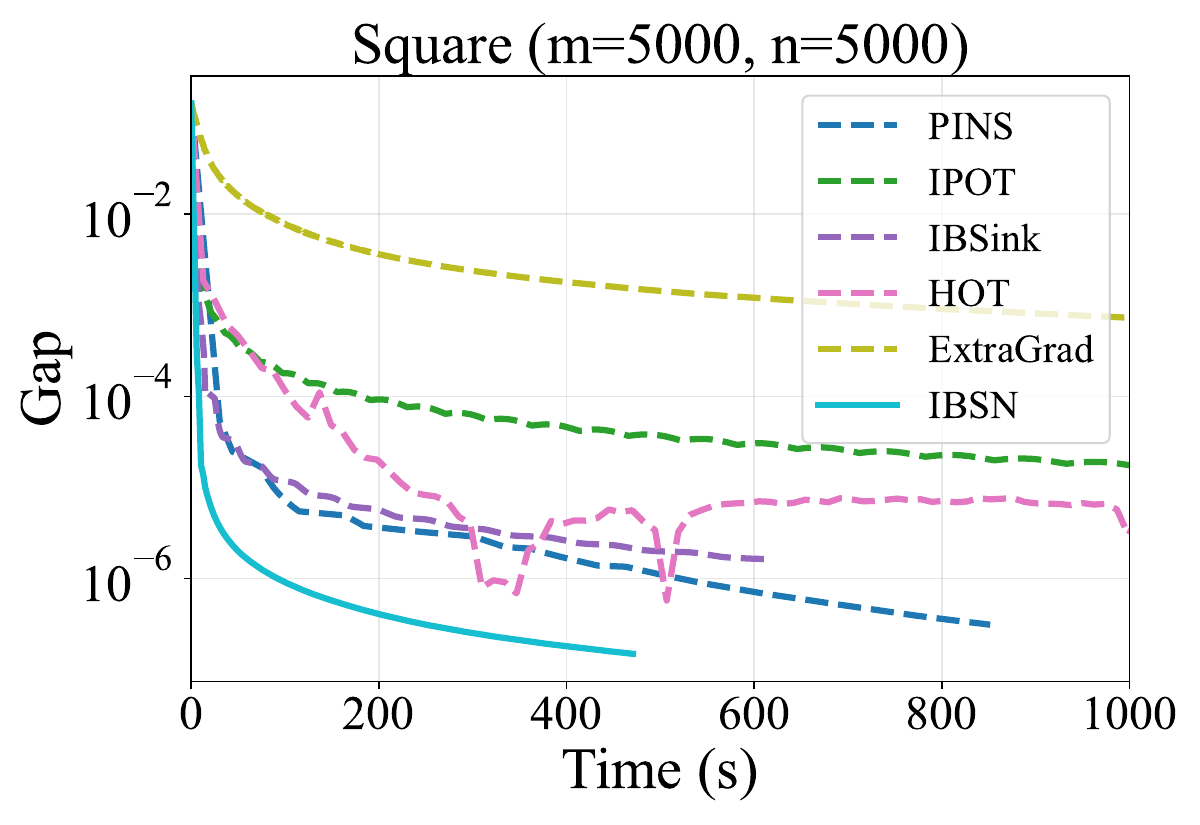}}}\hfill
{
\resizebox*{0.33 \textwidth}{!}{\includegraphics{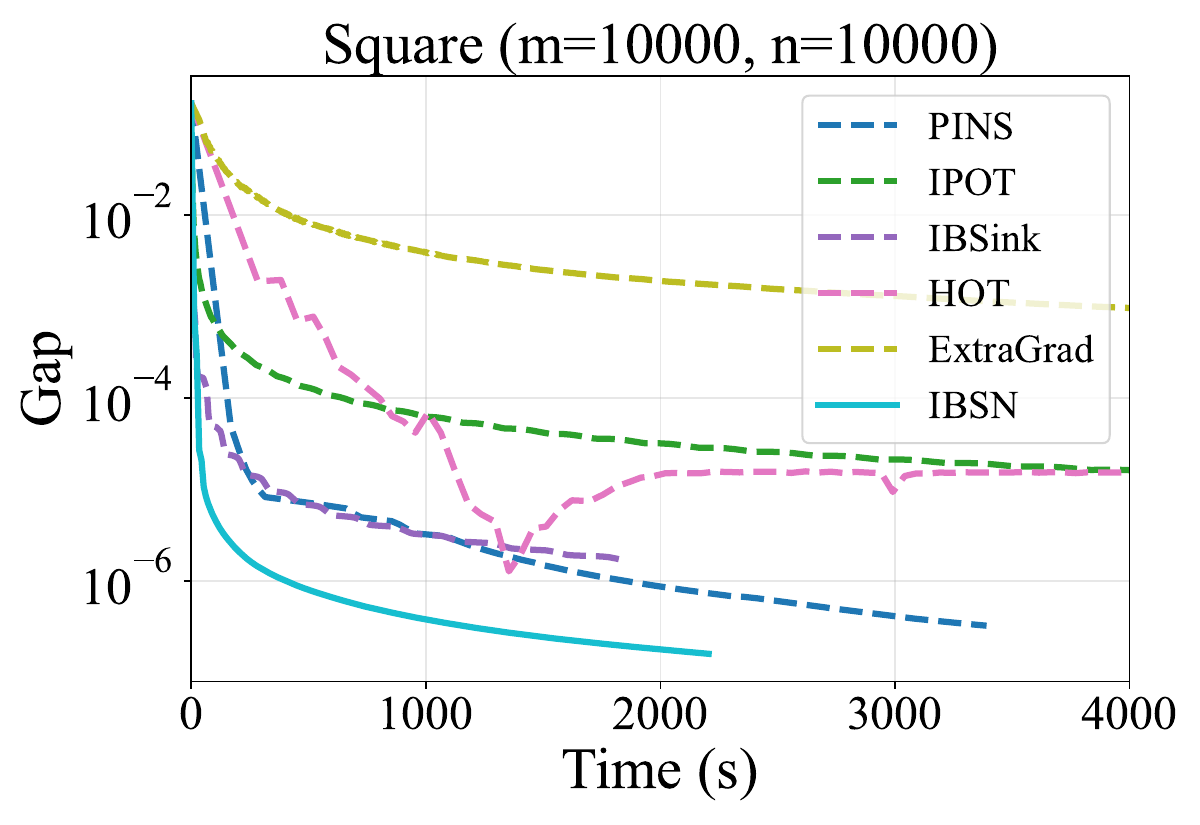}}}
\caption{Performance of different algorithms on synthetic data. Top: Uniform setting. Bottom: Square setting.}
\label{fig: synthetic comparison of methods}
\end{figure*}

\paragraph{Setup} To ensure consistency, we normalize all cost matrices to have a unit infinity norm, i.e., $C \leftarrow C / \max_{i,j}|C_{ij}|$. We initialize the transport plan as $X^0 = \vect{a}\vect{b}^\top$.

We use the relative KKT residual~\cite{yang2022bregman} ($\Delta_{\text{kkt}}$) as the unified stopping criterion for all methods (see Appendix~\ref{app: Derivation of relative KKT residual}).
An algorithm is terminated once $\Delta_{\text{kkt}} < 10^{-11}$ or the maximum number of iterations is attained. 


Within the inner iteration of IBSN, the Sinkhorn warm-up phase stops once the gradient norm is less than $10^{-3}$. In addition, we terminate the Newton phase when
\begin{equation}
\label{equ: stopping criterion}
\begin{aligned}
D_{\phi} (\Proj_{\Omega}& (X^{k+1}) , X^{k+1}) \leq \min\{m,n\} \cdot \mu_k, \\
& \text{where}\ \mu_k = \max\left\{ 10^{-4}/(k+1)^2, 10^{-11} \right\},
\end{aligned}
\end{equation}
where $\Proj_{\Omega}$ is computed using the rounding procedure in~\cite{altschuler2017near}. 
To save computational time, we only evaluate the condition~\eqref{equ: stopping criterion} after a simpler gradient norm check ($\left\| \nabla \lyp^k(\vect{\gamma}^{k, v+1}) \right\| < \mu_k$) passes (as suggested in~\cite{yang2022bregman}).


To compare the solution quality, we utilize the objective gap relative to a high-precision ground truth $X^*$ (obtained via Gurobi with tolerance $10^{-8}$):
$$
\text{Gap} = \left|\langle C, \Proj_{\Omega}(X) \rangle - \langle C, X^*\rangle \right|.
$$

\subsection{Synthetic data}
\label{subsec: Synthetic data}

We generate synthetic datasets using the following procedure. The marginal distributions $\vect{a}$ and $\vect{b}$ are sampled independently from a uniform distribution on $(0,1)$ and subsequently normalized to satisfy $\1_m^\top\vect{a} = \1_n^\top\vect{b} = 1$. We evaluate performance across problem dimensions $m = n \in \{1000, 5000, 10000\}$ using two types of cost matrices:

\begin{itemize}    
    \item \textbf{Uniform}: Each entry of $C$ is drawn independently from the uniform distribution on $(0,1)$. 
    \item \textbf{Square}: The cost matrix $C \in \mathbb{R}^{m \times n}$ is defined by the squared $\ell_2$-distance between pixel indices, i.e., $C_{ij} = (i - j)^2$.
\end{itemize}

We set $\eta = 10^{-4}$ for both IBSN and PINS, $\eta=10^{-3}$ for IBSink, $\eta=10^{-1}$ for IPOT. These values are selected based on empirical tuning and correspond to the best observed trade-off between convergence speed and solution accuracy for each method. A more detailed analysis of the impact of the regularization parameter $\eta$ is provided in Appendix~\ref{app: Impact of the regularization parameter}.

The experimental results are presented in Figure~\ref{fig: synthetic comparison of methods}. The second-order methods (IBSN and PINS) have faster convergence speed compared with the first-order methods in most cases. This takes advantage of using sparse matrix operations to reduce its per-iteration cost and fast convergence speed of second-order methods. In addition, IBSN surpasses PINS in overall efficiency. This advantage stems from our semi-dual formulation, which further reduces the cost of the second-order updates.



\subsection{Real data}


\begin{figure*}[t]
{
\resizebox*{0.33 \textwidth}{!}{\includegraphics{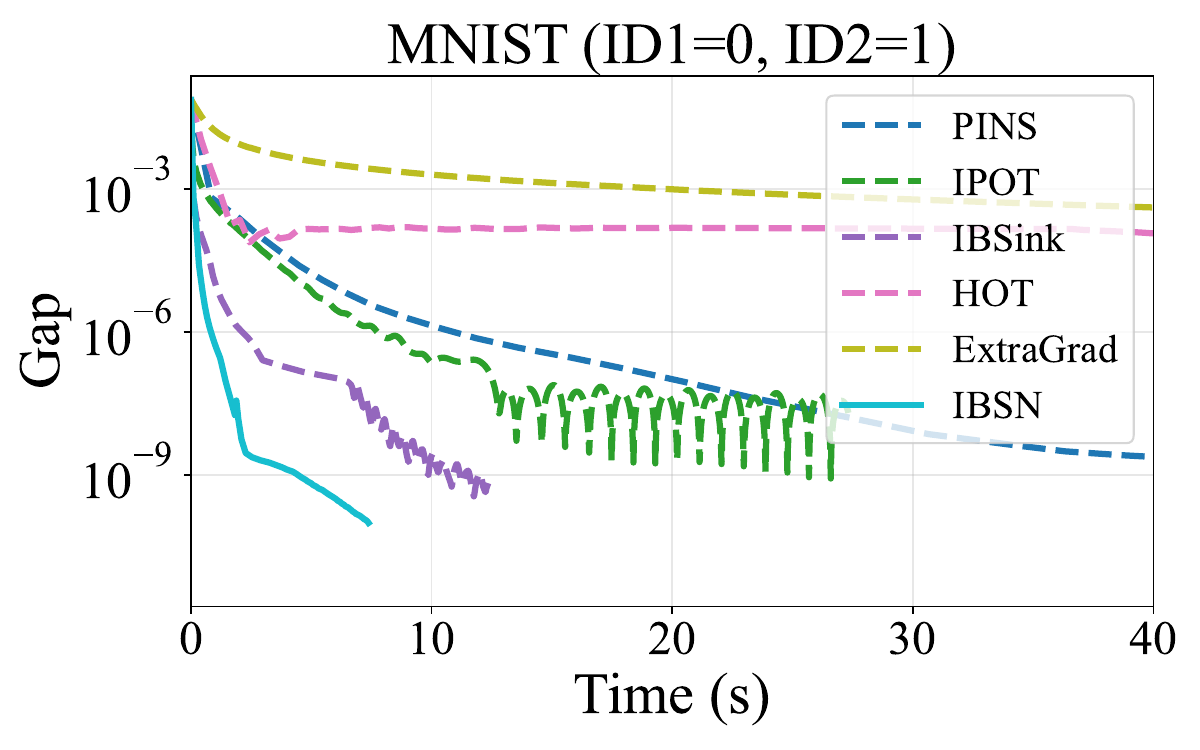}}}\hfill
{
\resizebox*{0.33 \textwidth}{!}{\includegraphics{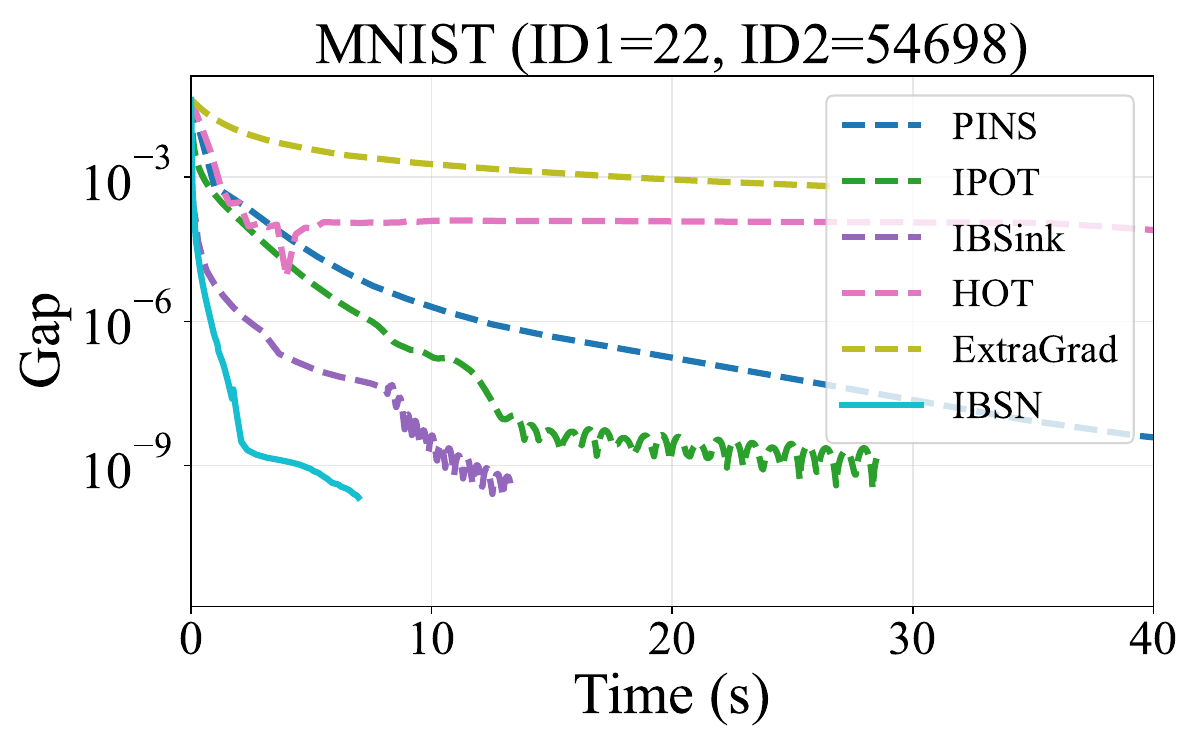}}}\hfill
{
\resizebox*{0.33 \textwidth}{!}{\includegraphics{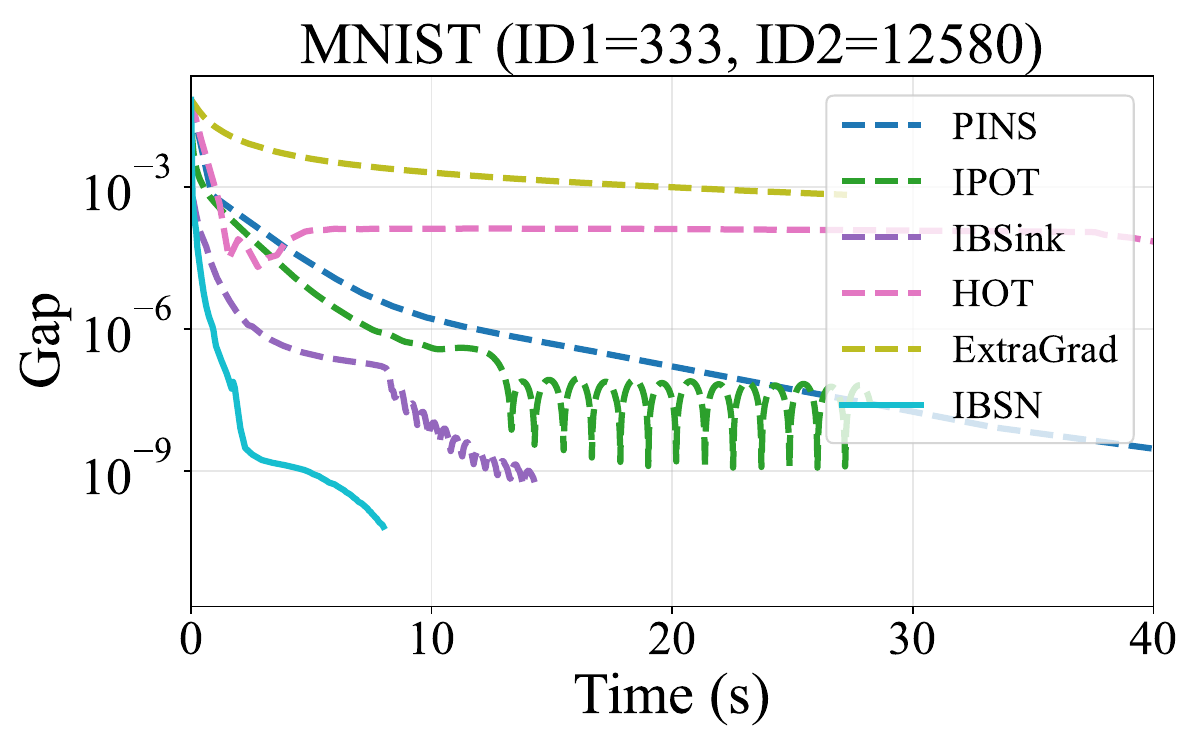}}}
{
\resizebox*{0.33 \textwidth}{!}{\includegraphics{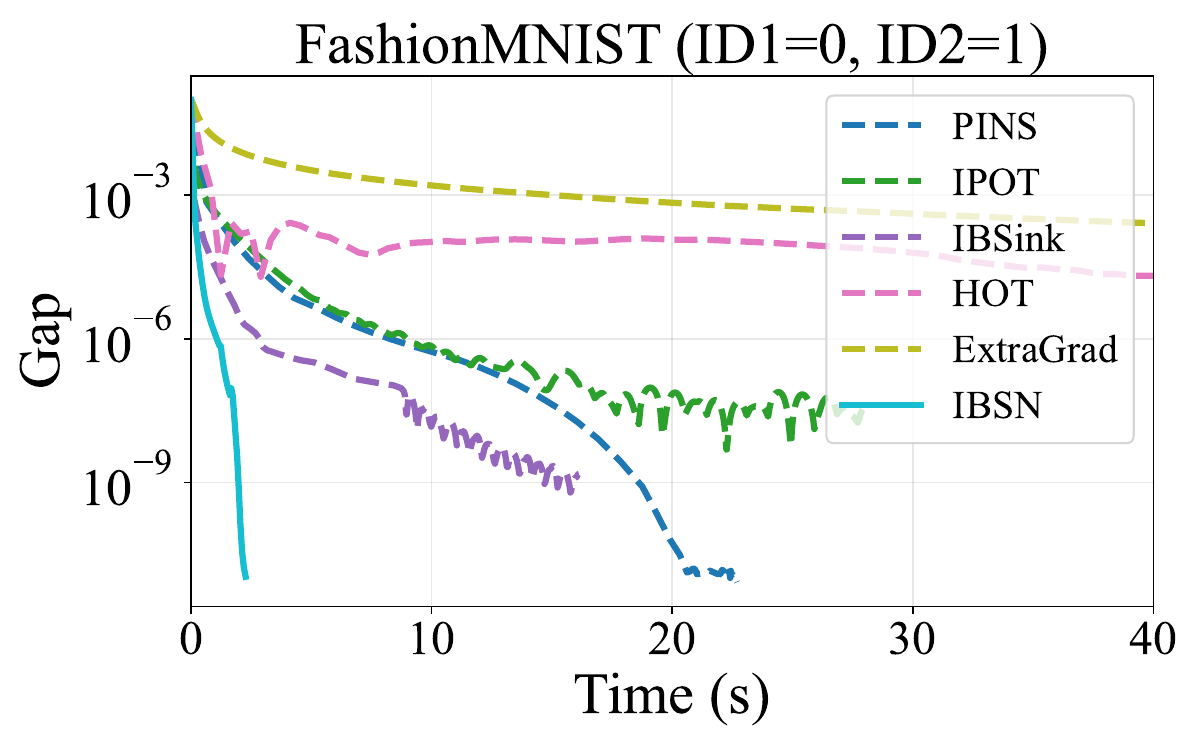}}}\hfill
{
\resizebox*{0.33 \textwidth}{!}{\includegraphics{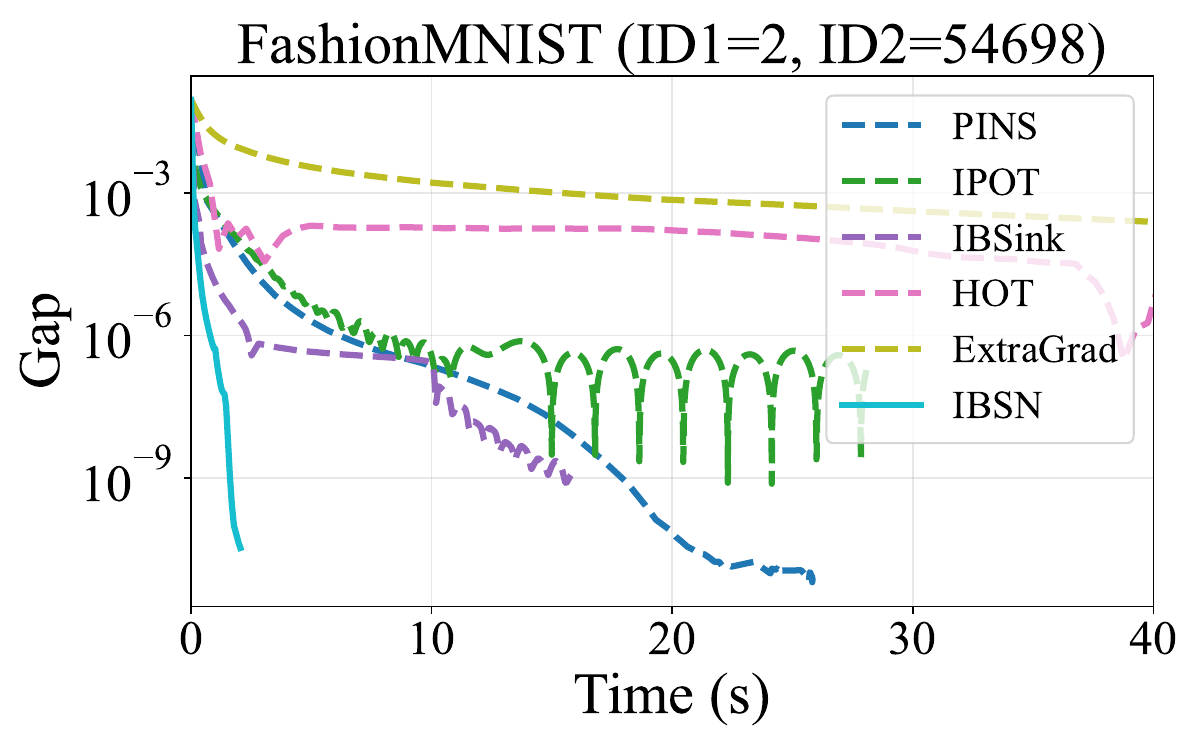}}}\hfill
{
\resizebox*{0.33 \textwidth}{!}{\includegraphics{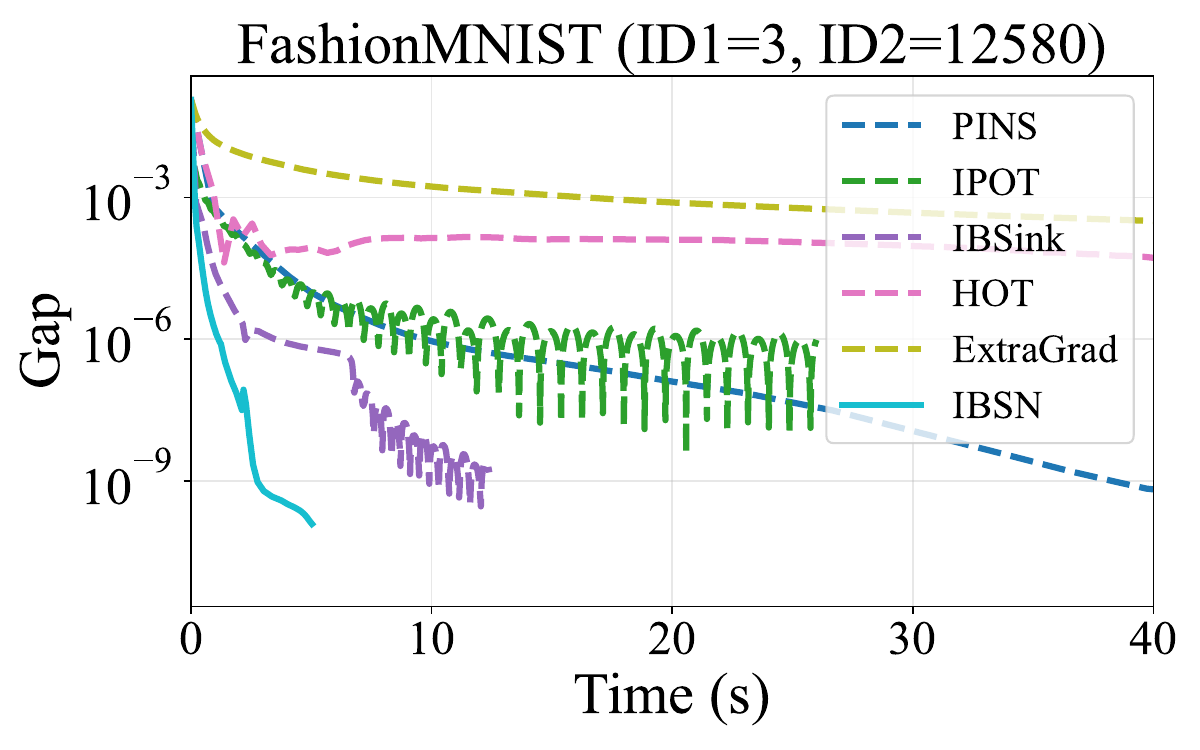}}}
\caption{Performance of different algorithms on MNIST dataset (Top) and Fashion-MNIST dataset (Bottom).}
\label{fig: Performance of different algorithms on real data.}
\end{figure*}

\begin{figure*}[!t]
{
\resizebox*{0.33 \textwidth}{!}{\includegraphics{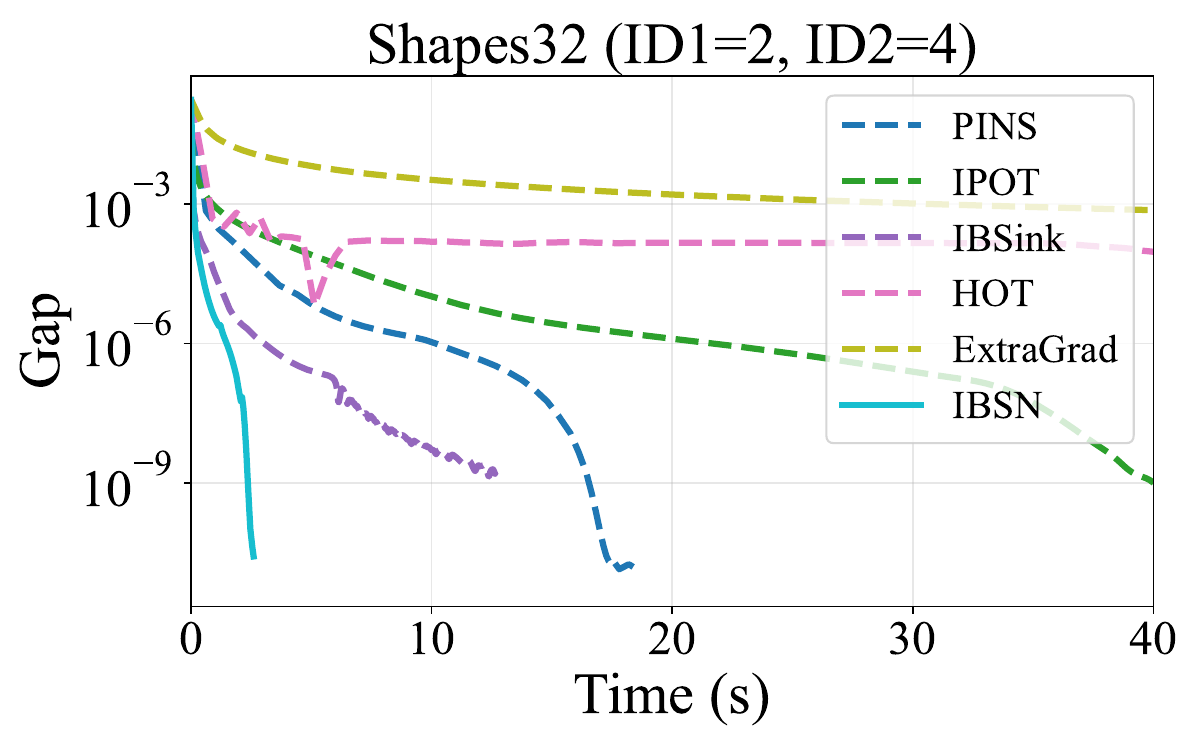}}}\hfill
{
\resizebox*{0.33 \textwidth}{!}{\includegraphics{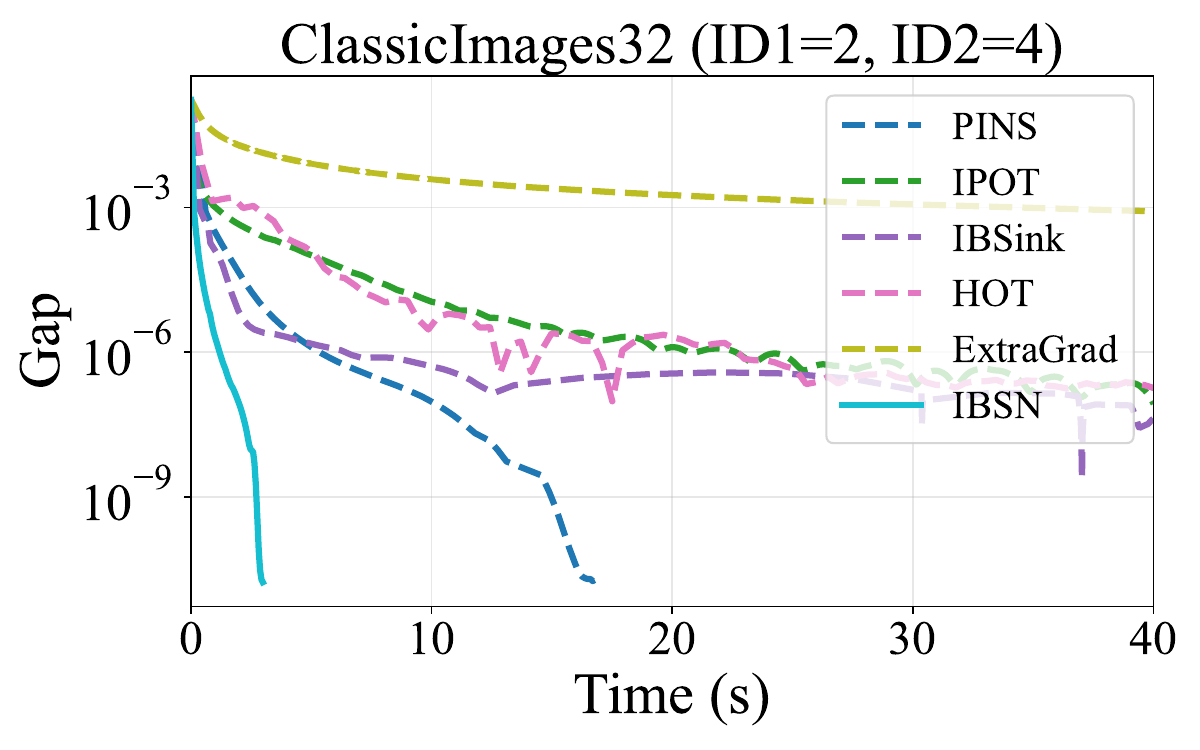}}}\hfill
{
\resizebox*{0.33 \textwidth}{!}{\includegraphics{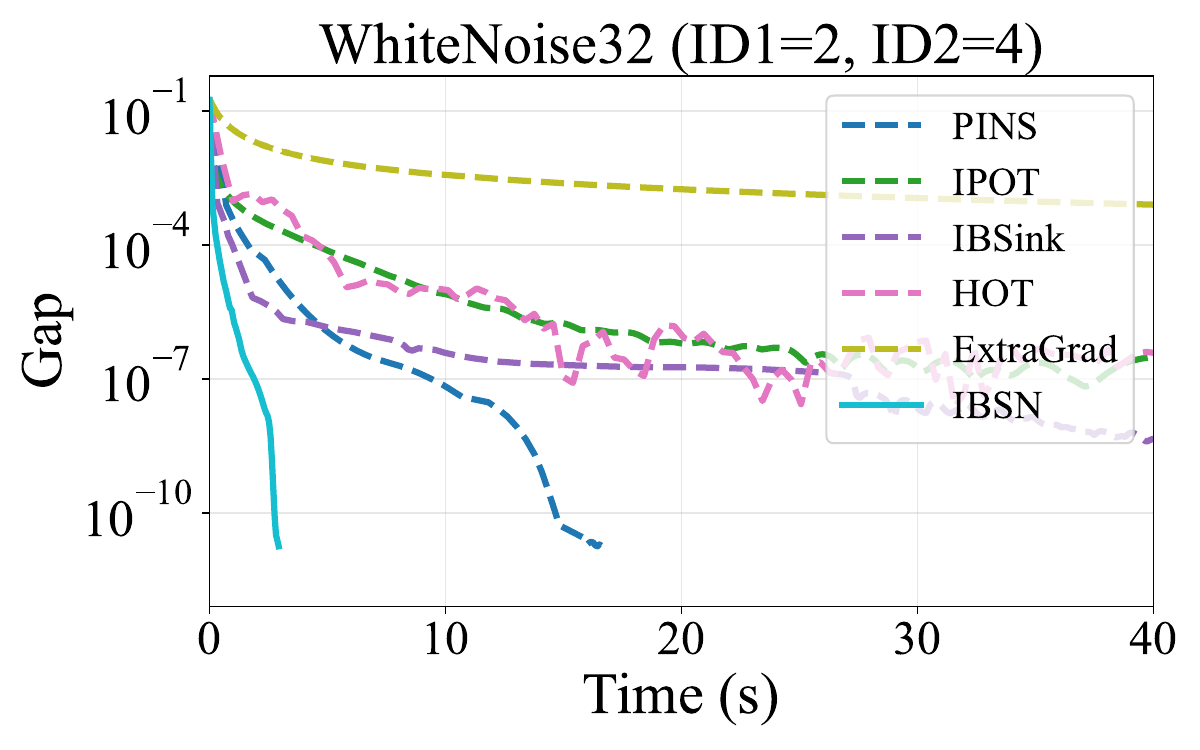}}}
{
\resizebox*{0.33 \textwidth}{!}{\includegraphics{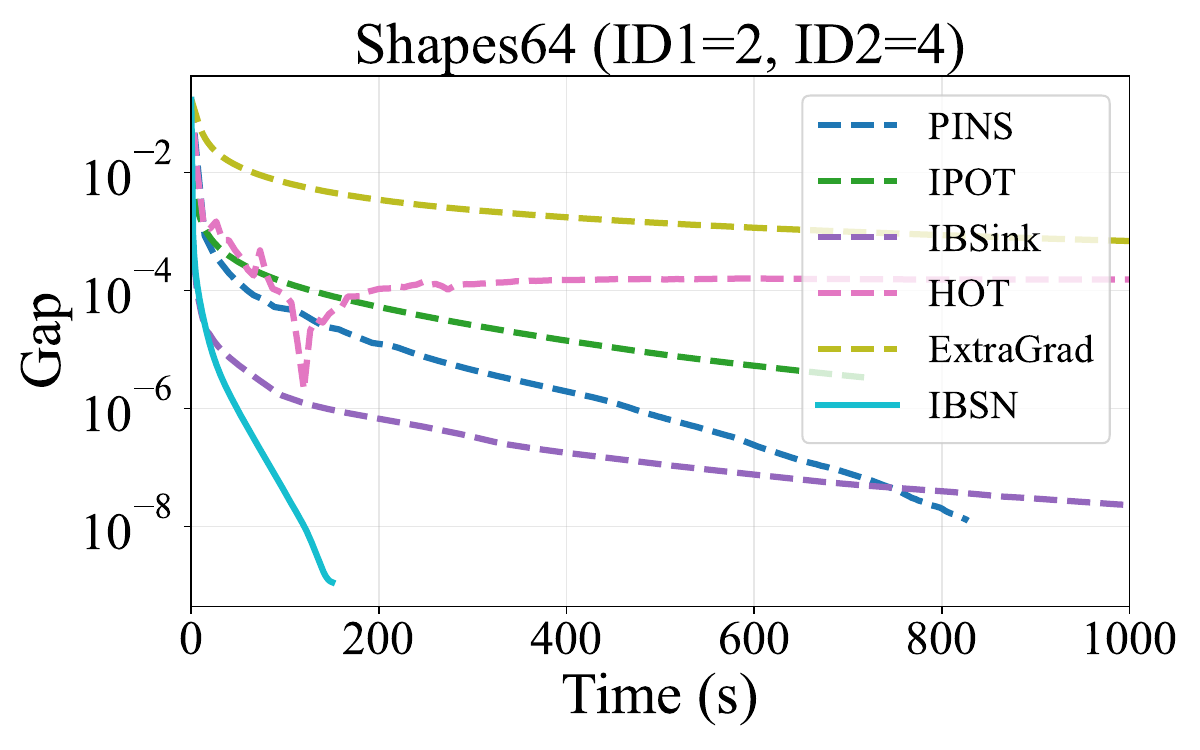}}}\hfill
{
\resizebox*{0.33 \textwidth}{!}{\includegraphics{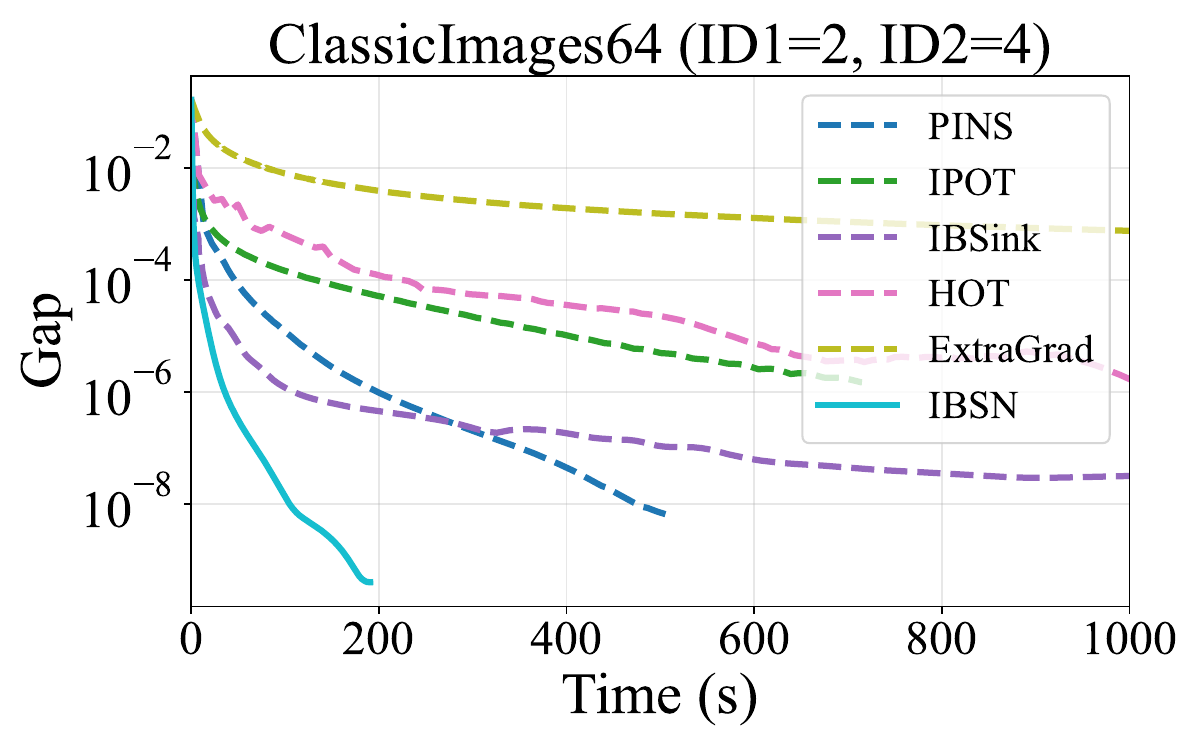}}}\hfill
{
\resizebox*{0.33 \textwidth}{!}{\includegraphics{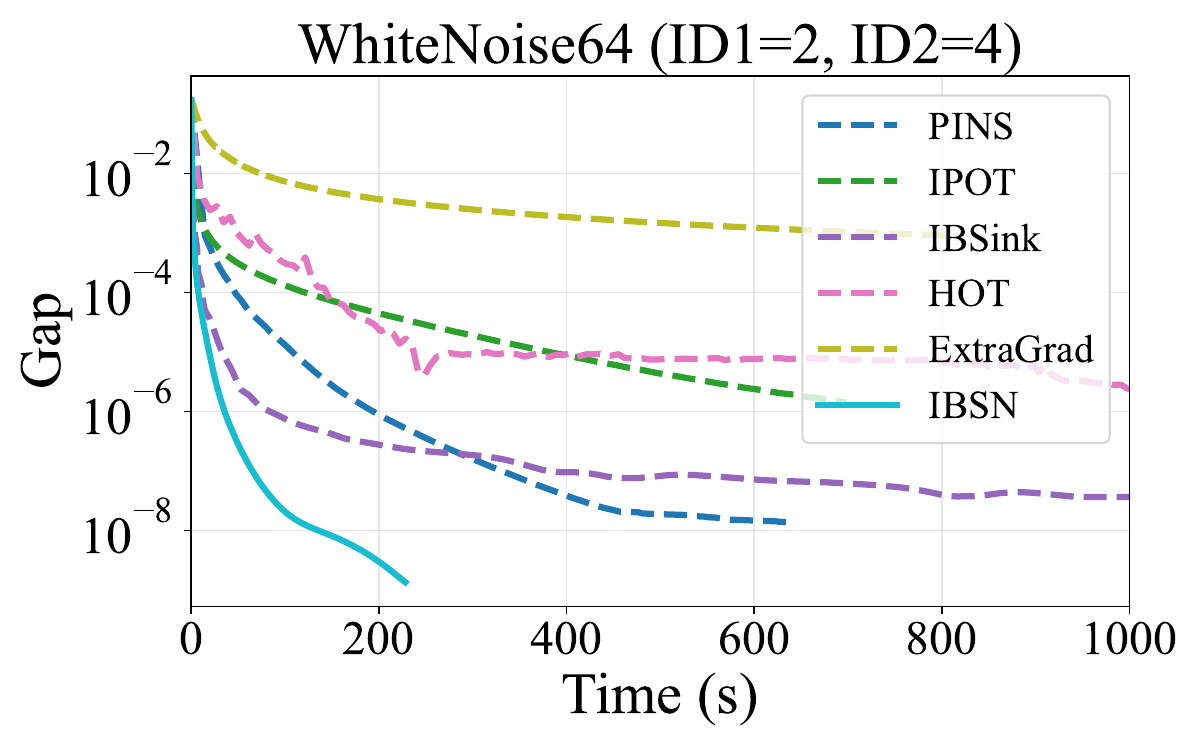}}}
\caption{Performance of different algorithms on DOTmark dataset. Top: Image size $32 \times 32$. Bottom: Image size $64 \times 64$.}
\label{fig: Performance of different algorithms on DOTmark dataset.}
\end{figure*}

We further evaluate the proposed IBSN algorithm on real-image OT tasks to examine its effectiveness in practical scenarios. Two benchmark datasets are considered:

\begin{itemize}
    \item \textbf{(Fashion-)MNIST}: We compute the OT map between pairs of images from the MNIST~\cite{lecun2002gradient} and the Fashion-MNIST~\cite{xiao2017fashion} datasets. The vectors $\vect{a}$ and $\vect{b}$ are obtained by flattening and normalizing the pixel intensity values, while the cost matrix $C$ is generated via the squared $\ell_2$-distance between pixel indices. The problem size is set to $m = n = 784$.
    \item \textbf{DOTmark}: The DOTmark dataset~\cite{schrieber2016dotmark} provides a comprehensive benchmark suite for evaluating OT algorithms. In our experiments, we select two images each from three categories (\texttt{Shapes}, \texttt{ClassicImages}, \texttt{WhiteNoise}) in the dataset, with image size $32 \times 32$ ($m = n = 1024$) and $64 \times 64$ ($m=n=4096$). The cost matrix $C$ and marginal vectors $\vect{a}, \vect{b}$ are constructed in the same manner as in the (Fashion-)MNIST experiments.  
\end{itemize}

We set $\eta = 10^{-3}$ for IBSN, PINS and IBSink, $\eta=10^{-1}$ for IPOT. These choices of $\eta$ are determined in the same way as in Section~\ref{subsec: Synthetic data}. 

Figure~\ref{fig: Performance of different algorithms on real data.} shows the results on the (Fashion-)MNIST dataset. As observed, first-order methods (HOT and ExtraGrad) remain comparatively slow. Notably, the convergence trajectory of IBSink exhibits three distinct stages, a behavior attributable to the inexact framework. Initially, the algorithm converges rapidly due to the loose inexactness tolerance $\mu_k$. As the tolerance tightens, IBSink enters a second stage where it requires significantly more iterations to satisfy the inner stopping criterion, causing a slowdown. Finally, as the transport plan $X$ approaches the optimal $X^*$, convergence accelerates again. In contrast, our proposed IBSN algorithm overcomes this mid-stage bottleneck by leveraging second-order information. This allows it to maintain rapid convergence even when the tolerance $\mu_k$ becomes strict. Similar patterns are observed in the DOTmark dataset results (Figure~\ref{fig: Performance of different algorithms on DOTmark dataset.}), where the performance gap between IBSN and competing algorithms is even more pronounced. 
More experiment results are given in Appendix~\ref{app: Additional Experiment Details}.

\section{Conclusion}
In this paper, we presented the Inexact Bregman Sparse Newton (IBSN) method, a novel framework for efficiently solving the exact optimal transport problem. By converting the subproblem into a semi-dual formulation and applying a Hessian sparsification strategy, our approach significantly reduces the computational cost associated with second-order updates. Furthermore, we employ a verifiable inexact stopping criterion, which allows the algorithm to proceed without solving every subproblem to high precision while still ensuring global convergence to the optimal plan. Our theoretical analysis confirms the convergence of the method, and extensive experiments on both synthetic and real-world datasets demonstrate that IBSN outperforms existing state-of-the-art solvers in terms of both speed and accuracy.


\section*{Impact Statement}
This paper presents work whose goal is to advance the field of Machine Learning. There are many potential societal consequences of our work, none of which we feel must be specifically highlighted here.


\bibliography{reference}
\bibliographystyle{icml2025}

\newpage
\appendix
\onecolumn

\section{Converting a matrix to a transportation plan}
\label{app: Converting a matrix to a transportation plan}

Given a general non-negative matrix $X \in \R^{m \times n}$, the authors in~\cite{altschuler2017near} put forward a simple algorithm that returns a transportation plan $\hat{X} \in \R^{m \times n}$ satisfying $\hat{X}\1_n = \vect{a}$ and $\hat{X}^\top \1_m = \vect{b}$, where $\1_m^\top \vect{a} = \1_n^\top \vect{b} = 1$. The algorithm is shown below as Algorithm~\ref{alg: rounding}.

\begin{algorithm}
\caption{Converting a matrix $X$ to a feasible transportation plan~\cite{altschuler2017near}}
\label{alg: rounding}
\begin{algorithmic}[1]
\STATE {\bfseries Input:} $X$, $\vect{a}$, $\vect{b}$.
\FOR{$i=1,...,m$}
\STATE $z_i \gets \min\left\{ \frac{a_i}{\sum_{j=1}^n X_{ij}}, 1\right\}$.
\ENDFOR
\STATE Compute $F = \diag{\vect{z}} X$.  \hfill \% row sum is no greater than $a_i$
\FOR{$j=1,...,n$}
\STATE $y_j \gets \min\left\{ \frac{b_j}{\sum_{i=1}^m X_{ij}}, 1\right\}$.
\ENDFOR
\STATE Update $F \gets F \diag{\vect{y}}$. \hfill \% column sum is no greater than $b_i$
\STATE Compute $\vect{e}_r \gets \vect{a} - F \1_n$ and $\vect{e}_c \gets \vect{b} - F^\top \1_m$.
\STATE Compute $\hat{X} \gets F + \vect{e}_r\vect{e}_c^\top / \|\vect{e}_r\|_1$.
\STATE {\bfseries Output:} $\hat{X}$.
\end{algorithmic}
\end{algorithm}




\section{Proofs of Theoretical Results}
\label{app: Proofs of Theoretical Results}

\subsection{Proof of Lemma~\ref{lem: semi-dual solution}}

\begin{proof}

    Let $(\vect{\gamma}', \vect{\zeta}')$ be the solution to the two-dual problem~\eqref{opt: two-dual formulation of subproblem}. Note that the objective function in the semi-dual problem~\eqref{opt: semi-dual problem} can be rewritten as $\lyp^k(\vect{\gamma}) = \min_{\vect{\zeta} \in \R^m} Z^k(\vect{\gamma}, \vect{\zeta}) + C$, where $C$ is a constant. Therefore, for any $\vect{\gamma} \in \R^n$,
    $$
    \lyp^k(\vect{\gamma}) = \min_{\vect{\zeta} \in \R^m} Z^k(\vect{\gamma}, \vect{\zeta}) + C \geq \min_{\vect{\gamma} \in \R^n, \vect{\zeta} \in \R^m} Z^k(\vect{\gamma}, \vect{\zeta}) + C = Z^k(\vect{\gamma}', \vect{\zeta}') + C.
    $$
    On the other hand, 
    $$
    \lyp^k(\vect{\gamma}') = \min_{\vect{\zeta} \in \R^m} Z^k(\vect{\gamma}', \vect{\zeta}) + C \leq Z^k(\vect{\gamma}', \vect{\zeta}') + C.
    $$
    Combining these two inequalities, we know
    $$
    \lyp^k(\vect{\gamma}') \leq Z^k(\vect{\gamma}', \vect{\zeta}') + C \leq \lyp^k(\vect{\gamma}), \quad \forall \vect{\gamma} \in \R^n.
    $$
    Therefore, $\vect{\gamma}'$ is a solution to the problem~\eqref{opt: semi-dual problem}.
\end{proof}

\subsection{Proof of Theorem~\ref{thm: positive semidefiniteness}}

\begin{proof}
For simplicity, let $W = P_{\rho}^\top \diag{\vect{a}} P_{\rho}$. The matrix $H_\rho$ is obviously symmetric. For arbitrary $\vect{z} \in \R^n$, we know
\begin{equation}
\label{equ: positive semidefinite}
\begin{aligned}
    \vect{z}^\top H_{\rho} \vect{z} = \vect{z}^\top \left(\diag{W \1_n} - W \right) \vect{z}/\eta = {1\over 2\eta}\sum_j\sum_k W_{jk} (z_j - z_k)^2 \geq 0,
\end{aligned}
\end{equation}
Hence, $H_\rho$ is positive semidefinite. Furthermore, by construction, there exists an $i^{\star}$ such that $(P_{\rho})_{i^{\star}j} > 0$ for $\forall j$, then $W_{jk} > 0$ for $\forall j,k$ and the above inequality~\eqref{equ: positive semidefinite} holds with equality for $z_j = z_k$ for $\forall j, k$.
\end{proof}

\subsection{Proof of Theorem~\ref{thm: min and max eigenvalues}}

\begin{lemma}
\label{lem: eq1}
For any $\vect{x} \in \1_n^{\perp}$, we have
\begin{equation}
    \sum_{j}\sum_{k} (x_j - x_k)^2 = 2n\|\vect{x}\|^2.
\end{equation}
\end{lemma}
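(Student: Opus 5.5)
Expanding the square and using $\sum_j x_j = 0$ proves Lemma~\ref{lem: eq1} in one step. Both indices range over $[n]$, so
\[
\sum_{j}\sum_{k}(x_j-x_k)^2=\sum_j\sum_k x_j^2+\sum_j\sum_k x_k^2-2\sum_j\sum_k x_jx_k .
\]
The first two double sums each equal $n\|\vect{x}\|^2$, because summing a term over the free index contributes a factor of $n$.

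The cross term factorizes as $\sum_j\sum_k x_jx_k=\bigl(\sum_j x_j\bigr)\bigl(\sum_k x_k\bigr)=(\1_n^\top\vect{x})^2$. This vanishes since $\vect{x}\in\1_n^{\perp}$. Adding the pieces gives $2n\|\vect{x}\|^2$, as claimed.

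No real obstacle arises; the only care needed is to keep track of the factor $n$ coming from the free index. This identity is what will later combine with the quadratic form~\eqref{equ: positive semidefinite}, after lower bounding $W_{jk}\ge a_{i^\star}p^2$, to give the minimum eigenvalue bound in Theorem~\ref{thm: min and max eigenvalues}.
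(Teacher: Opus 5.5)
Your proof is correct and follows the same route as the paper's: you expand the square, observe that the two diagonal double sums each give $n\|\vect{x}\|^2$, and the cross term equals $(\1_n^\top\vect{x})^2=0$ because $\vect{x}\in\1_n^{\perp}$.
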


\begin{proof}Expanding the square within the summation, we have
    $$
    \begin{aligned}
        \sum_j\sum_k (x_j - x_k)^2 &= \sum_j\sum_k(x_j^2 + x_k^2) - 2\sum_j\sum_kx_j x_k \\
        & = 2n \sum_{j} x_j^2  -  2\Big(\sum_{j}x_j\Big)^2   = 2n \|\vect{x}\|^2.
    \end{aligned}
    $$This completes the proof.
\end{proof}

Now we are ready to prove Theorem~\ref{thm: min and max eigenvalues}. 

\begin{proof}
    Based on the construction of $H_{\rho}$, we know
    $$
    W_{jk} = \sum_{i=1}^n a_i [P_{\rho}]_{ij}[P_{\rho}]_{ik} \geq a_{i^{\star}} [P_{\rho}]_{i^{\star}j}[P_{\rho}]_{i^{\star}k} \geq a_{i^{\star}} p^2.
    $$
    Therefore, for any $\vect{z} \in \1_n^{\perp}$ with $\|\vect{z}\|^2 = 1$, we have
    \begin{equation}
        \vect{z}^\top H_\rho \vect{z} = {1\over2\eta}\sum_{j}\sum_{k} W_{jk} (z_j - z_k)^2  \geq {a_{i^{\star}} p^2\over2\eta} \sum_{j}\sum_{k} (z_j - z_k)^2 =  {n a_{i^{\star}} p^2 \over \eta},
    \end{equation}
    where the first equation follows from Theorem~\ref{thm: positive semidefiniteness} and the final equation follows from Lemma~\ref{lem: eq1}.
    
    We consider the off-diagonal comments of $W$. Since $[P_{\rho}]_{ij}+\sum_{k\neq j}[P_{\rho}]_{ik}= 1$, we have
    $$
    \sum_{k\neq j}W_{jk} = \sum_{k\neq j}\sum_{i=1}^n a_i [P_{\rho}]_{ij}[P_{\rho}]_{ik}=\sum_{i=1}^n a_i [P_{\rho}]_{ij}(1-[P_{\rho}]_{ij})\leq  \sum_{i=1}^n {a_i\over 4}={1\over 4}.
    $$

    Note that the eigenvector $\vect{v}$ corresponding to $\lambda\left(H_{\rho}\right) \neq 0$ satisfies $\1_n^\top \vect{v} = 0$, so $\vect{v} \in \1_n^{\perp}$. For any $\vect{z} \in \R^n$ with $\|\vect{z}\|^2 = 1$, 
    \begin{equation}
    \label{equ: upper bound for max eigenvalue}
    \begin{aligned}
        \vect{z}^\top H_\rho \vect{z} = {1\over2\eta}\sum_{j}\sum_{k} W_{jk} (z_j - z_k)^2  \leq {1\over\eta} \sum_{j}\sum_{k\neq j} W_{jk} (z_j^2 + z_k^2)  
        = {2\over\eta} \sum_{j}z_j^2 \sum_{k \neq j} W_{jk} \leq {1 \over 2\eta}.
    \end{aligned}
    \end{equation}
    This completes the proof.
\end{proof}

\subsection{Proof of Theorem~\ref{thm: error bound for hessian matrix}}

We first present the approximation error of $E_P:=P-P_{\rho}$, where $P_\rho$ is construct via Algorithm~\ref{alg: Sparsifying the Hessian matrix}.

\begin{lemma}
\label{lem: bound for E_P}
Let $P$ and $P_{\rho}$ constructed by Algorithm~\ref{alg: Sparsifying the Hessian matrix} for any $\rho \geq 0$, then
$$
\|E_P\|_{\infty} \leq 2n\rho, \quad \|E_P\|_1 \leq 2mn\rho, \quad \text{and}\quad \|E_P\|\leq 2\sqrt{m}n\rho.
$$
\end{lemma}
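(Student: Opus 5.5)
The plan is to bound $E_P = P - P_\rho$ row by row and then pass to the induced norms. Row $i^\star$ of $E_P$ vanishes, since $[P_\rho]_{i^\star\cdot}=P_{i^\star\cdot}$. So we only need to treat a fixed row $i\neq i^\star$.

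For such a row, let $S_i=\{j: P_{ij}\ge\rho\}$ and $s_i=\sum_{j\in S_i}P_{ij}$. Since $\sum_j P_{ij}=1$, the discarded mass satisfies
\[
\delta_i := 1-s_i=\sum_{j\notin S_i}P_{ij}\le (n-|S_i|)\rho\le n\rho .
\]
The row is well defined as long as $S_i\neq\emptyset$. This holds because some entry satisfies $P_{ij}\ge 1/n$, so $S_i\neq\emptyset$ whenever $\rho\le 1/n$. If $\rho>1/n$, every claimed bound exceeds $2$, while $\|E_P\|_\infty\le 2$ trivially because both $P$ and $P_\rho$ are row-stochastic. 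The $\ell_1$ and spectral bounds are handled likewise by the trivial estimates below.

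The main step is the row $\ell_1$ estimate. Split the row sum into discarded and retained indices:
\[
\sum_j |[E_P]_{ij}| = \sum_{j\notin S_i}P_{ij} + \sum_{j\in S_i}P_{ij}\Big(\tfrac{1}{s_i}-1\Big) = \delta_i + s_i\cdot\frac{1-s_i}{s_i} = 2\delta_i \le 2n\rho .
\]
Taking the maximum over rows gives $\|E_P\|_\infty\le 2n\rho$.

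For the column-sum norm, bound each entry by its row sum, so $|[E_P]_{ij}|\le 2\delta_i\le 2n\rho$. Summing over the $m$ rows gives every column sum at most $2mn\rho$, hence $\|E_P\|_1\le 2mn\rho$.

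For the spectral norm, use $\|E_P\|\le \|E_P\|_F$ together with $\|E_P\|_F^2=\sum_i\|[E_P]_{i\cdot}\|_2^2\le \sum_i\|[E_P]_{i\cdot}\|_1^2\le m(2n\rho)^2$. This gives $\|E_P\|\le 2\sqrt m\, n\rho$. Alternatively, $\|E_P\|\le\sqrt{\|E_P\|_1\|E_P\|_\infty}$ gives $2\sqrt m\,n\rho$ directly, which is a convenient cross-check.

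The only delicate point is the edge case: confirming that the normalization in Algorithm~\ref{alg: Sparsifying the Hessian matrix} is well defined, and that the bounds hold trivially once $\rho$ is large enough that rows could become empty. Here I would check that in the large-$\rho$ regime the trivial bounds $\|E_P\|_\infty\le 2$, $\|E_P\|_1\le 2m$ and $\|E_P\|\le 2\sqrt{m}$ are still dominated by the stated ones. They are, since $n\rho>1$ in that regime.
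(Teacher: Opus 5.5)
Your proof is correct and takes essentially the same route as the paper: the retained/discarded split of each row gives $\|[E_P]_{i\cdot}\|_1 = 2\delta_i \le 2n\rho$, which yields the $\infty$- and $1$-norm bounds, and $\|E_P\| \le \sqrt{\|E_P\|_1\|E_P\|_\infty}$ then gives the spectral bound. Your Frobenius-norm variant and the edge-case discussion are harmless additions, but the case split on $\rho \le 1/n$ is unnecessary, since the row computation already holds for every $\rho$ at which $P_\rho$ is well defined.
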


\begin{proof}
To begin with, denote $K_i = \{j: P_{ij} < \rho\}$ and $J_i = \{j: P_{ij} \geq \rho\}$. Write $P_{i\cdot}=Q_{i\cdot}+S_{i\cdot}$ with
$$
\begin{aligned}
Q_{ij} = 
\begin{cases}
    P_{ij},  & j \in J_i\\
    0,  & j \in K_i\\
\end{cases},
\quad \quad
S_{ij} = 
\begin{cases}
    0,  & j \in J_i\\
    P_{ij},  & j \in K_i\\
\end{cases},
\end{aligned}
$$
then $\sum_{j\in J_i}Q_{ij}=1-\sum_{j \in K_i}P_{ij} := 1-r_i$ and $\sum_{j\in K_i}S_{ij}=r_i \leq n\rho$.
Hence, 
$$
\begin{aligned}
    \|P_{i\cdot} - [P_{\rho}]_{i\cdot}\|_1
     = \sum_{j\in J_i}\left|Q_{ij}-\frac{Q_{ij}}{1-r_i}\right| +\sum_{j\in K_i} \left|S_{ij}-0\right|
     = \left(\frac{1}{1-r_i}-1\right)\sum_{j\in J_i}Q_{ij} + \sum_{j\in K_i}S_{ij}
     = 2r_i \leq 2n \rho.
\end{aligned}
$$
Therefore, we obtain $\|E_P\|_{\infty} \leq 2n\rho$ and $\|E_P\|_1 \leq 2mn\rho$. In addition, 
$\|E_P\|\leq \sqrt{\|E_P\|_1 \|E_P\|_\infty} \leq 2 \sqrt{m}n\rho$.
\end{proof}

Now we show the approximation error between $H$ and sparsified $H_{\rho}$. 

\begin{proof}
To begin with, observe
$$
H - H_{\rho} = \frac{1}{\eta}\left(
\underbrace{\diag{P^\top\vect{a} - P^\top_{\rho}\vect{a}}}_{=:D_1}
-
\left(\underbrace{P^\top \diag{\vect{a}}E_P}_{=:D_2} + \underbrace{(E_P)^\top \diag{\vect{a}} P_{\rho}}_{D_3} \right)
\right).
$$
For $D_1$, note that
$$
\|D_1\| = \|\diag{(E_P)^\top \vect{a}}\| = \|E_P^\top \vect{a}\|_\infty \leq \|\vect{a}\|_\infty \|E_P\|_{1} 
\;\stackrel{\text{Lemma}~\ref{lem: bound for E_P}}{\leq} 2\rho mn\|\vect{a}\|_{\infty}.
$$
For $D_2$ and $D_3$, we have
$$
\|D_2\| \leq \|P\|\ \|\vect{a}\|_\infty\ \|E_P\|,\qquad
\|D_3\| \leq \|E_P\|\ \|\vect{a}\|_\infty\ \|P_\rho\|.
$$
In addition, since $P_{\rho}$ satisfies $\|P_{\rho}\|_\infty=1$ and $\|P_{\rho}\|_1\le m$ for any $\rho \geq 0$, we have $\|P_{\rho}\| \leq \sqrt{m}$. Hence
$$
\|D_2\| + \|D_3\| \leq 2\sqrt{m}\ \|\vect{a}\|_\infty\ \|E_P\| 
\;\stackrel{\text{Lemma}~\ref{lem: bound for E_P}}{\leq} 4\rho mn\,\|\vect{a}\|_\infty.
$$
Combining the two parts,
$$
\|H - H_{\rho}\| \leq \frac{1}{\eta}\big(\|D_1\|+\|D_2\| + \|D_3\|\big)
\leq \frac{6mn\|\vect{a}\|_{\infty}}{\eta} \rho.
$$
\end{proof}

\subsection{Proof of Lemma~\ref{lem: sequence restrict in the subspace}}

\begin{proof}
In the Newton stage, recall that the gradient satisfies $\1_n^\top \vect{g}^{k,v} = 0$ and the regularized Newton system is given by: $\big(H^{k,v}_\rho+\|\vect{g}^{k,v}\|I\big)\Delta \vect{\gamma}^{k,v} = \vect{g}^{k,v}$. Multiplying both sides of the Newton system by $\mathbf{1}_n^\top$ from the left yields
$$
\1_n^\top H^{k,v}_\rho \Delta \vect{\gamma}^{k,v} + \|\vect{g}^{k,v}\| \1_n^\top \Delta \vect{\gamma}^{k,v} = 0.
$$
From Theorem~\ref{thm: positive semidefiniteness}, the sparsified Hessian satisfies $H^{k,v}_\rho \1_n = 0$. Due to the symmetry of $H^{k,v}_\rho$, we also have $\1_n^\top H^{k,v}_\rho  = 0$. Consequently, the first term vanishes, leaving $\1_n^\top \Delta \vect{\gamma}^{k,v} = 0$ if $\|\vect{g}^{k,v}\| \neq0$. This ensures that the update maintains $\1_n^\top\vect{\gamma}^{k,v+1} = 0$.
\end{proof}

\subsection{Proof of Theorem~\ref{thm:inner-convergence}}

In the following analysis, we fix the outer iteration $k$. We first present the following lemmas. 

\begin{lemma}
\label{lem: bounded level set + bounded below}
The set $S^{k,0}(\vect{\gamma}) := \{\vect{\gamma}: \lyp^k(\vect{\gamma}) \leq \lyp^k(\vect{\gamma}^{k,0})\}$ is bounded, closed, and convex, $\lyp^k(\vect{\gamma})$ is bounded below.
\end{lemma}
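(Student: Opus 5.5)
The plan is to work with $\lyp^k$ restricted to the slice $\{\vect{\gamma}: \1_n^\top\vect{\gamma}=0\}$, where Lemma~\ref{lem: sequence restrict in the subspace} guarantees all iterates lie. On the whole of $\R^n$ the level set cannot be bounded. Indeed, replacing $\vect{\gamma}$ by $\vect{\gamma}+c\1_n$ changes the linear term by $-c\,\1_n^\top\vect{b}=-c$ and the log-sum-exp term by $\eta\sum_i a_i (c/\eta)=c$, so $\lyp^k(\vect{\gamma}+c\1_n)=\lyp^k(\vect{\gamma})$. I will therefore read $S^{k,0}$ as the level set intersected with this slice; closedness, convexity and the lower bound hold in either reading.

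Closedness and convexity are routine. $\lyp^k$ is continuous, so its sublevel set is closed, and intersecting with the closed hyperplane $\1_n^\top\vect{\gamma}=0$ keeps it closed. Each map $\vect{\gamma}\mapsto \log\sum_j X^k_{ij}\exp((\gamma_j-C_{ij})/\eta)$ is a log-sum-exp of affine functions, hence convex. Since $a_i>0$ and the remaining term is linear, $\lyp^k$ is convex, so its sublevel sets, and their intersection with a hyperplane, are convex.

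For boundedness and the lower bound I will use an explicit estimate rather than a recession-cone argument. First I check that $X^k_{ij}>0$ for all $i,j$ by induction. We have $X^0=\vect{a}\vect{b}^\top>0$, and each update $X^{k+1}=\diag{\vect{a}}P^k(\cdot)$ multiplies positive entries by positive exponentials and normalizes. Hence $c_k:=\min_{i,j}X^k_{ij}e^{-C_{ij}/\eta}>0$. Bounding each inner sum from below by its largest term gives
$$
\lyp^k(\vect{\gamma})\ \ge\ -\vect{b}^\top\vect{\gamma}+\eta\sum_i a_i\Big(\log c_k+\max_j\gamma_j/\eta\Big)
=\eta\log c_k+\sum_j b_j\Big(\max_l\gamma_l-\gamma_j\Big).
$$
Every summand on the right is nonnegative, so $\lyp^k\ge \eta\log c_k$ on all of $\R^n$, which is the lower bound.

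Moreover, the sum on the right is at least $b_{\min}\,(\max_l\gamma_l-\min_l\gamma_l)$ with $b_{\min}=\min_j b_j>0$. On the slice $\1_n^\top\vect{\gamma}=0$ we have $\max_l\gamma_l\ge 0\ge\min_l\gamma_l$, so $\|\vect{\gamma}\|_\infty\le \max_l\gamma_l-\min_l\gamma_l$. Hence every $\vect{\gamma}$ in $S^{k,0}$ satisfies
$$
\|\vect{\gamma}\|_\infty\le \big(\lyp^k(\vect{\gamma}^{k,0})-\eta\log c_k\big)/b_{\min},
$$
which proves boundedness.

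The main point requiring care is this step: the entrywise positivity of $X^k$, together with the use of the normalization $\1_n^\top\vect{\gamma}=0$ to remove the invariant direction $\1_n$. Once these are settled, the estimate above settles both boundedness and the lower bound.
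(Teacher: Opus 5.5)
Your proof is correct, and it takes a different route from the paper's.

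\textbf{The paper's route.} The paper argues indirectly. It takes a minimizer $\vect{\gamma}^{k,\star}$, whose existence comes from Lemma~\ref{lem: semi-dual solution}, i.e., from assuming the two-dual problem is solvable. It then invokes Lemma~1 of Dvurechensky et al.\ to bound $\max_j\gamma^{k,\star}_j-\min_j\gamma^{k,\star}_j$, concludes that $\lyp^k$ is bounded below, and finishes with Rockafellar's Corollary~8.7.1: one nonempty bounded level set of a closed proper convex function forces all level sets to be bounded.

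\textbf{The shift invariance you noted.} Your observation that $\lyp^k(\vect{\gamma}+c\1_n)=\lyp^k(\vect{\gamma})$ is right, and it matters for the paper's argument. On all of $\R^n$ the optimal level set is the line $\vect{\gamma}^{k,\star}+\text{span}\{\1_n\}$. So the paper's step ``$S^k_{\lyp^{k,\star}}$ is non-empty and bounded'' holds only after restricting to the slice $\1_n^\top\vect{\gamma}=0$, as does its claim of strict convexity. The paper uses this restriction tacitly later on, so reading $S^{k,0}$ on the slice is the correct interpretation.

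\textbf{What your approach buys.} Your direct estimate $\lyp^k(\vect{\gamma})\ge \eta\log c_k+\sum_j b_j(\max_l\gamma_l-\gamma_j)$ avoids all the external results and gives an explicit radius. It also yields more than the lemma states:
\begin{itemize}
\item It bounds the oscillation $\max_l\gamma_l-\min_l\gamma_l$ on the unrestricted level set. This is exactly what Lemma~\ref{lem: uniform lower bound for min eigenvalue of hessian} needs: the entries of $P^k$ stay bounded away from zero, since $P^k$ is invariant under shifts along $\1_n$.
\item Coercivity on the slice gives existence of a minimizer directly. Hence Lemma~\ref{lem: semi-dual solution} follows without appealing to solvability of the two-dual problem.
\end{itemize}

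\textbf{The cost.} You must verify entrywise positivity of $X^k$, which you do by induction. The paper also needs this fact, both for strict convexity on the slice and for the dual bound to apply with the effective cost $C-\eta\log X^k$, but it leaves it implicit.
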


\begin{proof}
    First, it is easy to see that $\lyp^k(\vect{\gamma})$ is continuously differentiable and strictly convex, so $S^k_c := \{\vect{\gamma}: \lyp^k(\vect{\gamma}) \leq c\}$ is closed and convex. 
    
    Let $\vect{\gamma}^{k, \star}$ be an optimal solution to problem~\eqref{opt: semi-dual problem} and $\vect{\zeta}^{k, \star} := \vect{\zeta}(\vect{\gamma})$ as defined in~\eqref{equ: best zeta given gamma}, then 
    $$
    Z^k(\vect{\gamma}^{k, \star}, \vect{\zeta}^{k, \star}) = \min_{\vect{\zeta} \in \R^m} Z^k(\vect{\gamma}^{k, \star}, \vect{\zeta}) = \lyp^k(\vect{\gamma}^{k, \star})-C = \min_{\vect{\gamma} \in \R^n} \lyp^k(\vect{\gamma})-C = \min_{\vect{\gamma} \in \R^n, \vect{\zeta} \in \R^m} Z^k(\vect{\gamma}, \vect{\zeta}),
    $$
    where $C$ is a constant. Therefore, $(\vect{\gamma}^{k, \star}, \vect{\zeta}^{k, \star})$ is an optimal solution to problem~\eqref{opt: two-dual formulation of subproblem}. By Lemma 1 in~\cite{dvurechensky2018computational}, we obtain
    $$
    \max_j \gamma_j^{k, \star} - \min_j \gamma_j^{k, \star} \leq R,
    $$
    where $R = \|C\|_{\infty} / \eta - \log(\min_{i,j}\{a_i, b_j\})$. It follows that $\|\vect{\gamma}^{k, \star}\| < \infty$ and then clearly $\lyp^{k, \star} := \lyp^k(\vect{\gamma}^{k, \star}) > -\infty$. Obviously, $S^k_{\lyp^{k, \star}}$ is non-empty and bounded, so $S^{k,0}(\vect{\gamma})$ is bounded by Corollary 8.7.1 in~\cite{rockafellar1997convex}. 
\end{proof}

\begin{lemma}
\label{lem: uniform lower bound for min eigenvalue of hessian}
    Let $H_{\rho}^k(\vect{\gamma})$ be the matrix output by Algorithm~\ref{alg: Sparsifying the Hessian matrix}, then for $\rho \geq 0$, there exist a constant $\underline{C}_k > 0$ such that 
    $\lambda_{\min}(H_{\rho}^k(\vect{\gamma})|_{\1_n^\perp}) \geq \underline{C}_k$ for all $\vect{\gamma} \in S^{k,0}(\vect{\gamma}).$
\end{lemma}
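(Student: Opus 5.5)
The plan is to combine the eigenvalue bound of Theorem~\ref{thm: min and max eigenvalues} with compactness of the level set from Lemma~\ref{lem: bounded level set + bounded below}. Theorem~\ref{thm: min and max eigenvalues} gives, for every $\rho \geq 0$ and every $\vect{\gamma}$,
$$
\lambda_{\min}\left(H_{\rho}^k(\vect{\gamma})|_{\1_n^\perp}\right) \geq \frac{n a_{i^\star}}{\eta}\, p(\vect{\gamma})^2, \qquad p(\vect{\gamma}) := \min_j \left[P^k(\vect{\gamma})\right]_{i^\star j}.
$$
Note that $i^\star = \arg\max_i a_i$ does not depend on $\vect{\gamma}$ or $\rho$. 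So the whole task reduces to a uniform positive lower bound on $p(\vect{\gamma})$ over $S^{k,0}$. In particular, the threshold $\rho$ plays no role, because the row $i^\star$ is never sparsified in Algorithm~\ref{alg: Sparsifying the Hessian matrix}.

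\textbf{Step 1 (positivity of $X^k$).} I would check that $X^k_{i^\star j} > 0$ for all $j$, so that $p(\vect{\gamma})$ is a continuous and strictly positive function of $\vect{\gamma}$. The argument is by induction on $k$.
\begin{itemize}
\item The base case holds since $X^0 = \vect{a}\vect{b}^\top$ has all entries positive, because $\vect{a}, \vect{b} \in \R_{++}$.
\item For the inductive step, each $X^{k+1} = \diag{\vect{a}} P^k(\vect{\gamma})$ is entrywise positive whenever $X^k$ is, since exponentials are positive.
\end{itemize}

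\textbf{Step 2 (explicit lower bound on $p$).} For fixed $k$, write
$$
\left[P^k(\vect{\gamma})\right]_{i^\star j} = \frac{X^k_{i^\star j} e^{(\gamma_j - C_{i^\star j})/\eta}}{\sum_l X^k_{i^\star l} e^{(\gamma_l - C_{i^\star l})/\eta}}.
$$
Bounding the numerator below and the denominator above gives
$$
p(\vect{\gamma}) \geq \frac{\min_l X^k_{i^\star l}}{\sum_l X^k_{i^\star l}} \exp\left(-\frac{\max_j \gamma_j - \min_j \gamma_j + \|C\|_\infty}{\eta}\right).
$$
This quantity depends on $\vect{\gamma}$ only through the oscillation $\max_j \gamma_j - \min_j \gamma_j$.

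\textbf{Step 3 (uniform bound via compactness).} By Lemma~\ref{lem: bounded level set + bounded below}, $S^{k,0}$ is bounded, so $\|\vect{\gamma}\|_\infty \le M_k$ on it for some $M_k < \infty$. Hence the oscillation is at most $2M_k$. Setting
$$
\underline{C}_k := \frac{n a_{i^\star}}{\eta}\left(\frac{\min_l X^k_{i^\star l}}{\sum_l X^k_{i^\star l}}\right)^2 e^{-2(2M_k + \|C\|_\infty)/\eta} > 0
$$
finishes the proof. Alternatively, one can invoke continuity of $p$ on the compact set $S^{k,0}$, which is closed by the same lemma, so that $p$ attains a positive minimum there.

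\textbf{Main obstacle.} The main subtlety is the boundedness of $S^{k,0}$. The function $\lyp^k$ is invariant along $\1_n$, since $\lyp^k(\vect{\gamma} + c\1_n) = \lyp^k(\vect{\gamma})$ because $\1^\top\vect{b} = 1$. So its level sets are unbounded in that direction, and Lemma~\ref{lem: bounded level set + bounded below} must be read as restricted to the slice $\1_n^\top\vect{\gamma} = 0$. That slice is exactly where the iterates live, by Lemma~\ref{lem: sequence restrict in the subspace}.

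This restriction costs nothing here. The bound in Step 2 uses only the oscillation of $\vect{\gamma}$, and the oscillation is invariant under shifts along $\1_n$. Therefore bounding the oscillation on the slice bounds it on the full level set, and the constant $\underline{C}_k$ is valid for all $\vect{\gamma} \in S^{k,0}$ in either reading.
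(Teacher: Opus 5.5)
Your proof is correct and takes essentially the paper's route. You bound $p(\vect{\gamma})=\min_j[P^k(\vect{\gamma})]_{i^\star j}$ away from zero on the bounded level set and insert it into the bound $n a_{i^\star}p^2/\eta$ of Theorem~\ref{thm: min and max eigenvalues}; your explicit oscillation estimate makes precise what the paper only asserts. Your remark that $\lyp^k$ is invariant along $\1_n$, so that $S^{k,0}$ is bounded only on the slice $\1_n^\top\vect{\gamma}=0$, is a valid correction to Lemma~\ref{lem: bounded level set + bounded below}, and your shift-invariance argument shows the constant $\underline{C}_k$ is unaffected.
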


\begin{proof}
    By the boundedness of $S^{k,0}(\vect{\gamma})$, each element of $P^k(\vect{\gamma})$ as defined in~\eqref{equ: p update} is bounded below from zero over $S^{k,0}(\vect{\gamma})$. Recall the lower bound of $\lambda_{\min}(H_{\rho}^k(\vect{\gamma})|_{\1_n^\perp})$ established in Theorem~\ref{thm: min and max eigenvalues}, we conclude that there exists a constant $\underline{C}_k > 0$ such that $\lambda_{\min}(H_{\rho}^k(\vect{\gamma})|_{\1_n^\perp}) \geq \underline{C}_k$ holds for any $\rho \geq 0$ and all $\vect{\gamma} \in S^{k,0}(\vect{\gamma})$.
\end{proof}

As shown in Lemma~\ref{lem: sequence restrict in the subspace}, the convergence analysis can be safely restricted to $\1_n^{\perp}$, we then denote $\lambda_{\min}(H_{\rho}^{k,v}|_{\1_n^\perp})$ by $\lambda_{\min}(H_{\rho}^{k,v})$ in the subsequent analysis for notational simplicity. We then show that $\Delta \vect{\gamma}^{k,v}$ is a descent direction and provide the minimum stepsize. 

\begin{lemma}
\label{lem:armijo-lower-bound}
In the outer $k$ iteration, let $\{\vect{\gamma}^{k,v}\}_v$ be the sequence generated by Algorithm~\ref{alg: IBSN} satisfying $\1_n^\top\vect{\gamma}^{k,0} = 0$, $\Delta \vect{\gamma}^{k,v}$ be a direction for $\lyp^k$ and we choose $t^{k,v}$ as stepsize by Armijo backtracking, then 
\begin{enumerate}[(i)]
    \item $\Delta \vect{\gamma}^{k,v}$ is a descent direction for $\lyp^k$; 
    \item the backtracking terminates finitely with a step size $t^{k,v}$ bounded below by
\begin{equation}
\label{equ:tmin}
t^{k,v} \geq t^{k,v}_{\min} := \min\left\{1,\ \beta \frac{4\eta (1-\sigma)[\lambda_{\min}(H^{k,v}_\rho)]^2}{1/2\eta+2}\right\} > 0,
\end{equation}
where $\beta\in(0,1)$ is the backtracking contraction factor and $\sigma\in(0,1)$ is the Armijo parameter.
\end{enumerate}
\end{lemma}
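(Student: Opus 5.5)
Let $\vect{g}=\vect{g}^{k,v}$, $M=H^{k,v}_\rho+\|\vect{g}\|I$, and $\Delta=\Delta\vect{\gamma}^{k,v}=-M^{-1}\vect{g}$. By Lemma~\ref{lem: sequence restrict in the subspace} we have $\1_n^\top\vect{g}=0$ and $\1_n^\top\Delta=0$, so everything can be done on $\1_n^\perp$. There $M$ is symmetric positive definite, with eigenvalues in $[\lambda_{\min}(H^{k,v}_\rho)+\|\vect{g}\|,\ 1/(2\eta)+\|\vect{g}\|]$ by Theorem~\ref{thm: positive semidefiniteness} and Theorem~\ref{thm: min and max eigenvalues}.

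For (i), assume $\vect{g}\neq\0$, since otherwise the iteration has already stopped. Then
\[
\vect{g}^\top\Delta=-\vect{g}^\top M^{-1}\vect{g}\le -\frac{\|\vect{g}\|^2}{\lambda_{\max}(M)}<0,
\]
so $\Delta$ is a descent direction. We also record two bounds for later:
\[
\|\Delta\|\le \frac{\|\vect{g}\|}{\lambda_{\min}(H^{k,v}_\rho)},\qquad
-\vect{g}^\top\Delta\ge \lambda_{\min}(M)\|\Delta\|^2\ge \lambda_{\min}(H^{k,v}_\rho)\|\Delta\|^2 .
\]

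For (ii), we use the standard Armijo argument based on a Lipschitz bound for $\nabla\lyp^k$. The true Hessian $H^k(\vect{\gamma})$ has the same form as $H_\rho$ with $\rho=0$; the row-sum argument in the proof of Theorem~\ref{thm: min and max eigenvalues} uses only that $P$ is row-stochastic, so it gives $\lambda_{\max}(H^k(\vect{\gamma}))\le 1/(2\eta)$ for every $\vect{\gamma}$. Hence $\nabla\lyp^k$ is $L$-Lipschitz with $L=1/(2\eta)$, and the descent lemma gives
\[
\lyp^k(\vect{\gamma}+t\Delta)\le \lyp^k(\vect{\gamma})+t\,\vect{g}^\top\Delta+\frac{L t^2}{2}\|\Delta\|^2 .
\]
The Armijo condition $\lyp^k(\vect{\gamma}+t\Delta)\le\lyp^k(\vect{\gamma})+\sigma t\,\vect{g}^\top\Delta$ therefore holds whenever
\[
\frac{Lt}{2}\|\Delta\|^2\le (1-\sigma)(-\vect{g}^\top\Delta).
\]
Using $-\vect{g}^\top\Delta\ge \lambda_{\min}(H^{k,v}_\rho)\|\Delta\|^2$, it suffices that
\[
t\le \frac{2(1-\sigma)\lambda_{\min}(H^{k,v}_\rho)}{L}=4\eta(1-\sigma)\lambda_{\min}(H^{k,v}_\rho).
\]
Backtracking thus stops after finitely many reductions, and the accepted step satisfies $t\ge\min\{1,\beta\cdot 4\eta(1-\sigma)\lambda_{\min}\}$.

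The stated bound has the weaker form $\beta\,4\eta(1-\sigma)\lambda_{\min}^2/(1/(2\eta)+2)$. To match it exactly, we instead lower-bound $-\vect{g}^\top\Delta$ by $\|\vect{g}\|^2/\lambda_{\max}(M)$ and upper-bound $\|\Delta\|^2$ by $\|\vect{g}\|^2/\lambda_{\min}^2$. This yields the sufficient condition
\[
t\le \frac{2(1-\sigma)\lambda_{\min}^2}{L\,\lambda_{\max}(M)} .
\]
We then bound $\lambda_{\max}(M)\le 1/(2\eta)+\|\vect{g}\|$, and bound $\|\vect{g}\|\le\|P^\top\vect{a}\|+\|\vect{b}\|\le 2$, giving the denominator $1/(2\eta)+2$. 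The constant comes out as $4\eta$ from $2/L$, matching the statement, which is a weaker bound than the first derivation. Either derivation gives $t^{k,v}_{\min}>0$ because $\lambda_{\min}(H^{k,v}_\rho)>0$ on $\1_n^\perp$.

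The main obstacle is rigor in the Lipschitz step. The descent lemma must be applied along the segment $[\vect{\gamma},\vect{\gamma}+t\Delta]$, and we need the global Hessian bound there, not just at the current point. This holds because the bound $1/(2\eta)$ is uniform in $\vect{\gamma}$, since any $P^k(\vect{\gamma})$ is row-stochastic.
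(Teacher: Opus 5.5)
Your proposal is correct, and your second derivation is essentially the paper's own argument. The paper applies the descent lemma with $L=1/(2\eta)$ to get the threshold $\bar t=4\eta(1-\sigma)(-\vect{g}^\top\Delta)/\|\Delta\|^2$, then uses $-\vect{g}^\top\Delta\ge\|\vect{g}\|^2/(\lambda_{\max}+\|\vect{g}\|)$, $\|\Delta\|\le\|\vect{g}\|/\lambda_{\min}$ and $\|\vect{g}\|\le 2$. Your first derivation uses $-\vect{g}^\top\Delta=\Delta^\top M\Delta\ge\lambda_{\min}\|\Delta\|^2$ to get the sharper $t\ge\min\{1,4\beta\eta(1-\sigma)\lambda_{\min}\}$, which implies the stated bound because $\lambda_{\min}\le 1/(2\eta)<1/(2\eta)+2$.
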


\begin{proof}
(i) To begin with, it is easy to see that
$$
(\vect{g}^{k,v})^\top \Delta \vect{\gamma}^{k,v}
= -(\vect{g}^{k,v})^\top\big(H^{k,v}_\rho+\|\vect{g}^{k,v}\|I\big)^{-1} (\vect{g}^{k,v})<0,
$$
where $\vect{g}^{k,v}$ is the gradient of $\lyp^k$ on $\vect{\gamma}^{k,v}$ and the inequality due to Theorem~\ref{thm: positive semidefiniteness}. Therefore, $\Delta \vect{\gamma}^{k,v}$ is a descent direction. \\

(ii) Since $\lambda_{\max} (H_{\rho}) \leq 1/(2\eta)$ for any $\rho \geq 0$ from Theorem~\ref{thm: min and max eigenvalues}, $\vect{g}^{k,v}$ is $1/(2\eta)$-Lipschitz continuous, and the standard quadratic upper bound yields
$$
\lyp^k(\vect{\gamma}^{k,v}+t \Delta \vect{\gamma}^{k,v}) \leq \lyp^k(\vect{\gamma}^{k,v})+t (\vect{g}^{k,v})^\top \Delta \vect{\gamma}^{k,v} + \frac{1}{4\eta}t^2\|\Delta \vect{\gamma}^{k,v}\|^2.
$$
Hence any $t$ with
$$
0<t \leq \bar{t}^{k,v}
:= \frac{4\eta(1-\sigma)\,(-(\vect{g}^{k,v})^\top \Delta \vect{\gamma}^{k,v})}{\|\Delta \vect{\gamma}^{k,v}\|^2}
$$
satisfies the Armijo inequality $\lyp^k(\vect{\gamma}^{k,v}+t \Delta \vect{\gamma}^{k,v}) \leq \lyp^k(\vect{\gamma}^{k,v})+\sigma t(\vect{g}^{k,v})^\top \Delta \vect{\gamma}^{k,v}$.

We now provide a lower bound for $\bar{t}^{k,v}$. First, note that
$$
\begin{aligned}
    & -(\vect{g}^{k,v})^\top \Delta \vect{\gamma}^{k,v} = (\vect{g}^{k,v})^\top\big(H^{k,v}_\rho+\|\vect{g}^{k,v}\|I\big)^{-1} (\vect{g}^{k,v})\geq \frac{1}{\lambda_{\max}(H_{\rho}^{k,v})+\|\vect{g}^{k,v}\|}\|\vect{g}^{k,v}\|^2, \\
    & \|\Delta \vect{\gamma}^{k,v}\|= \left(H^{k,v}_\rho+\|\vect{g}^{k,v}\|I\right)^{-1} \vect{g}^{k,v} \leq \frac{1}{\lambda_{\min}(H^{k,v}_\rho)+\|\vect{g}^{k,v}\|}\,\|\vect{g}^{k,v}\| \leq \frac{1}{\lambda_{\min}(H^{k,v}_\rho)}\,\|\vect{g}^{k,v}\|.
\end{aligned}
$$
Combining these,
\[
\bar{t}^{k,v} \geq
\frac{4\eta(1-\sigma)[\lambda_{\min}(H^{k,v}_\rho)]^2}{\lambda_{\max}(H^{k,v}_\rho)+\|\vect{g}^{k,v}\|} \geq 
\frac{4\eta (1- \sigma)[\lambda_{\min}(H^{k,v}_\rho)]^2}{\lambda_{\max}(H^{k,v}_\rho)+2},
\]
where the final inequality holds since $\|\vect{g}^{k,v}\| \leq \|(P^k(\vect{\gamma}^{k,v}))^\top \vect{a}\| + \|\vect{b}\| \leq 2$. Backtracking with ratio $\beta$ accepts some $t^{k,v} \geq \beta\bar{t}^{k,v}$
and using the bound $\lambda_{\max}(H^{k,v}_\rho)\leq 1 / (2\eta)$ and $\lambda_{\min}(H^{k,v}_\rho) > 0$ in Theorem~\ref{thm: min and max eigenvalues}, 
yields \eqref{equ:tmin}. 
\end{proof}

Next, we provide the proof of Theorem~\ref{thm:inner-convergence}. 

\begin{proof}
(i) From Lemma~\ref{lem:armijo-lower-bound}, we know
$$
-(\vect{g}^{k,v})^\top \Delta \vect{\gamma}^{k,v} \geq \frac{1}{\lambda_{\max}(H^{k,v})+\|\vect{g}^{k,v}\|}\|\vect{g}^{k,v}\|^2 \geq \frac{1}{1/(2\eta) + 2}\|\vect{g}^{k,v}\|^2,
$$
since $\|\vect{g}^{k,v}\| \leq 2$. It further yields
\begin{equation}
\label{equ: decrease property}
\lyp^k(\vect{\gamma}^{k,v+1}) - \lyp^k(\vect{\gamma}^{k,v})\leq - \sigma t^{k,v}_{\min} \cdot \frac{1}{1/(2\eta) + 2}\|\vect{g}^{k,v}\|^2,
\end{equation}
where $t_{\min}^{k,v}$ is defined in Lemma~\ref{lem:armijo-lower-bound}. Thus $\{\lyp^k(\vect{\gamma}^{k,v})\}_v$ is monotonically decreasing and $\vect{\gamma}^{k,v} \in S^k(\vect{\gamma})$ for all $v$. By Lemma~\ref{lem: uniform lower bound for min eigenvalue of hessian}, we know $\lambda_{\min}(H_{\rho}^{k,v}) \geq \underline{C}_k > 0$ for all $v$ and $\rho \geq 0$ over $S^{k,0}(\vect{\gamma})$. This allows us to define a uniform lower bound for the step size:
$$
t_{\min}^{k,v} \geq t_{\min}^k := \min\left\{1,\ \beta \frac{4\eta (1-\sigma)\underline{C}_k^2}{1/2\eta+2}\right\}.
$$
In addition, since $\lyp^k$ is bounded below by Lemma~\ref{lem: bounded level set + bounded below}, the telescoping sum of~\eqref{equ: decrease property} implies $\sum_v \|\vect{g}^{k,v}\|^2 < \infty$. Therefore, we conclude that $\|\vect{g}^{k,v}\| \to 0$ as $v \to \infty$. \\

(ii) By the definition of $\vect{\gamma}^{k, \star}$, we know $g^k(\vect{\gamma}^{k, \star}) \in \text{span}\{\1_n\}$, where $g^k(\vect{\gamma})$ is defined in~\eqref{equ: grad and hess}. On the other hand, it is easy to check that for all $\vect{\gamma} \in \R^n$, $\1_n^\top g^k(\vect{\gamma}) = 0$. Combining the two relations implies $g^k(\vect{\gamma}^{k, \star}) = \0_n$. 

Over $S^{k,0}(\vect{\gamma})$ and $\1_n^{\perp}$, since $\lyp^k$ is $\underline{C}_k$-strongly convex, the solution of $\lyp^k$ is unique and
$$
\|\vect{g}^{k,v}\| = \|\vect{g}^{k,v} - g^k(\vect{\gamma}^{k, \star})\| \geq \underline{C}_k \|\vect{\gamma}^{k,v} - \vect{\gamma}^{k, \star}\|
$$
by Theorem 2.1.10 in~\cite{nesterov2018lectures}. Since $\|\vect{g}^{k,v}\|\to 0$, it follows that $\vect{\gamma}^{k, v}\to \vect{\gamma}^{k, \star}$ as $v \to \infty$, where $\vect{\gamma}^{k, \star}$ satisfies $\1_n^\top \vect{\gamma}^{k, \star} = 0$.
\end{proof}

\subsection{Proof of Theorem~\ref{thm:sparse-newton-rate}}





\begin{proof}
The following proof is motivated by~\cite{tang2024safe}. By Theorem~\ref{thm: min and max eigenvalues} and Lipschitz continuity of $H^{k,v}$, we have
$$
\begin{aligned}
    & \left\|\vect{g}^{k,v}\right\| \leq 1 / (2\eta) \left\|\vect{\gamma}^{k,v} - \vect{\gamma}^{k, \star}\right\|, \\
    & \left\|\vect{g}^{k,v} - g^k\left(\vect{\gamma}^{k, \star}\right)-H^{k,v}\left(\vect{\gamma}^{k,v} - \vect{\gamma}^{k, \star}\right)\right\| \leq \frac{L_H}{2}\left\|\vect{\gamma}^{k,v} - \vect{\gamma}^{k, \star}\right\|^2.
\end{aligned}
$$
In addition, denote $B_{\rho}^{k,v} := H_{\rho}^{k,v} + \left\|\vect{g}^{k,v}\right\|I$, we have
$$
\begin{aligned}
    \left\| H^{k,v} -B_{\rho}^{k,v} \right\| \leq \left\| H^{k,v} - H^{k,v}_{\rho} \right\| + \left\| \vect{g}^{k,v} \right\| \leq \left(6\|\vect{a}\|_{\infty} + 1\right) \left\| \vect{g}^{k,v} \right\| \leq \frac{6\|\vect{a}\|_{\infty} + 1}{2\eta} \left\|\vect{\gamma}^{k,v} - \vect{\gamma}^{k, \star}\right\|, 
\end{aligned}
$$
where the second inequality holds due to Theorem~\ref{thm: error bound for hessian matrix}. 

In addition, since $\vect{\gamma}^{k,v} \in S^{k,0}(\vect{\gamma})$, we have
$$
\begin{aligned}
    \left\|(B_{\rho}^{k,v})^{-1}\right\| = \left(\lambda_{\min}(B_{\rho}^{k,v}) \right)^{-1} \leq \left(\lambda_{\min}(H_{\rho}^{k,v}) \right)^{-1} \leq \underline{C}_k^{-1},
\end{aligned}
$$
where $\underline{C}_k>0$ is defined in Lemma~\ref{lem: uniform lower bound for min eigenvalue of hessian}. As a result,
$$
\begin{aligned}
\left\|\vect{\gamma}^{k, v+1} - \vect{\gamma}^{k, \star}\right\| & =\left\|\vect{\gamma}^{k, v} + \Delta \vect{\gamma}^{k,v} - \vect{\gamma}^{k, \star}\right\|=\left\|\vect{\gamma}^{k, v} - \vect{\gamma}^{k, \star} - (B_{\rho}^{k,v})^{-1} g^{k,v}\right\| \\
& =\left\|(B_{\rho}^{k,v})^{-1}\left(B_{\rho}^{k,v} \left(\vect{\gamma}^{k,v} - \vect{\gamma}^{k, \star}\right) - g^{k,v}\right)\right\| \\
& \leq\left\|(B_{\rho}^{k,v})^{-1}\right\| \cdot\left\|g^{k,v} - g^k\left(\vect{\gamma}^{k, \star}\right)-H^{k,v}\left(\vect{\gamma}^{k,v} - \vect{\gamma}^{k, \star}\right) + \left(H^{k,v}-B_{\rho}^{k,v}\right)\left(\vect{\gamma}^{k,v} - \vect{\gamma}^{k, \star}\right)\right\| \\
& \leq \underline{C}_k^{-1} \left(\left\|g^{k,v} - g^k\left(\vect{\gamma}^{k, \star}\right)-H^{k,v}\left(\vect{\gamma}^{k,v} - \vect{\gamma}^{k, \star}\right)\right\| + \left\|\left(H^{k,v} -B_{\rho}^{k,v}\right) \left(\vect{\gamma}^{k,v} - \vect{\gamma}^{k, \star}\right)\right\|\right) \\
& \leq \underline{C}_k^{-1} \left(\frac{1}{2} L_H\left\|\vect{\gamma}^{k,v} - \vect{\gamma}^{k, \star}\right\|^2 + \frac{6\|\vect{a}\|_{\infty} + 1}{2\eta} \left\|\vect{\gamma}^{k,v} - \vect{\gamma}^{k, \star}\right\|^2\right) \\
& = \underline{C}_k^{-1} \left(\frac{1}{2}L_H + \frac{6\|\vect{a}\|_{\infty} + 1}{2\eta} \right) \left\|\vect{\gamma}^{k,v} - \vect{\gamma}^{k, \star}\right\|^2 .
\end{aligned}
$$
\end{proof}

\section{Additional Experiment Details}
\label{app: Additional Experiment Details}



\subsection{Derivation of relative KKT residual $\Delta_{\text{kkt}}$}
\label{app: Derivation of relative KKT residual}

One can show that the dual problem of~\eqref{opt: problem} is 
\begin{equation}
    \max_{\vect{\gamma} \in \R^n, \vect{\zeta} \in \R^m} \vect{a}^\top \vect{\zeta} + \vect{b}^\top \vect{\gamma} \quad \text{s.t.} \quad U(\vect{\gamma}, \vect{\zeta}) := C - \vect{\zeta}\1_n^\top - \1_m \vect{\gamma}^\top \geq 0
\end{equation}
with KKT system
\begin{equation}
\label{equ: kkt system for ot}
    X\1_n = \vect{a}, \quad X^\top \1_m = \vect{b}, \quad \langle X, U(\vect{\gamma}, \vect{\zeta}) \rangle = 0, \quad X \geq 0, \quad U(\vect{\gamma}, \vect{\zeta}) \geq 0.
\end{equation}

Based on~\eqref{equ: kkt system for ot}, the relative KKT residual $\Delta_{\text{kkt}}$ is defined as: 
\begin{equation}
\Delta_{\text{kkt}} := \max\{\Delta_p, \Delta_d, \Delta_c\},
\end{equation}
where $\Delta_p:=\max \left\{\frac{\left\|X \1_n-\vect{a}\right\|}{1+\|\vect{a}\|}, \frac{\left\|X^{\top} \1_m-\vect{b}\right\|}{1+\|\vect{b}\|}, \frac{\|\min \{X, 0\}\|_F}{1+\|X\|_F}\right\}$, $\Delta_d:= \frac{\|\min \{U(\vect{\gamma}, \vect{\zeta}), 0\}\|_F}{1+\|C\|_F}$, $\Delta_c:= \frac{\left| \langle X, U(\vect{\gamma}, \vect{\zeta}) \rangle \right|}{1+\|C\|_F}$, and $\|\cdot\|_F$ is the Frobenius norm of a matrix. 
Clearly, $\Delta_{\text{kkt}} = 0$ if and only if $(X, \vect{\gamma}, \vect{\zeta})$ satisfies the KKT system~\eqref{equ: kkt system for ot}, making $\Delta_{\text{kkt}}$ a natural measure of optimality. In our semi-dual formulation~\eqref{opt: semi-dual problem}, we compute $\vect{\zeta}$ via the \emph{c-transform}: $\zeta_i = \min_{j \in [n]} (C_{ij} - \gamma_j)$ $ \forall i \in [m]$.

\subsection{Impact of the regularization parameter $\eta$}
\label{app: Impact of the regularization parameter}

\begin{figure*}[!t]
{
\resizebox*{0.33 \textwidth}{!}{\includegraphics{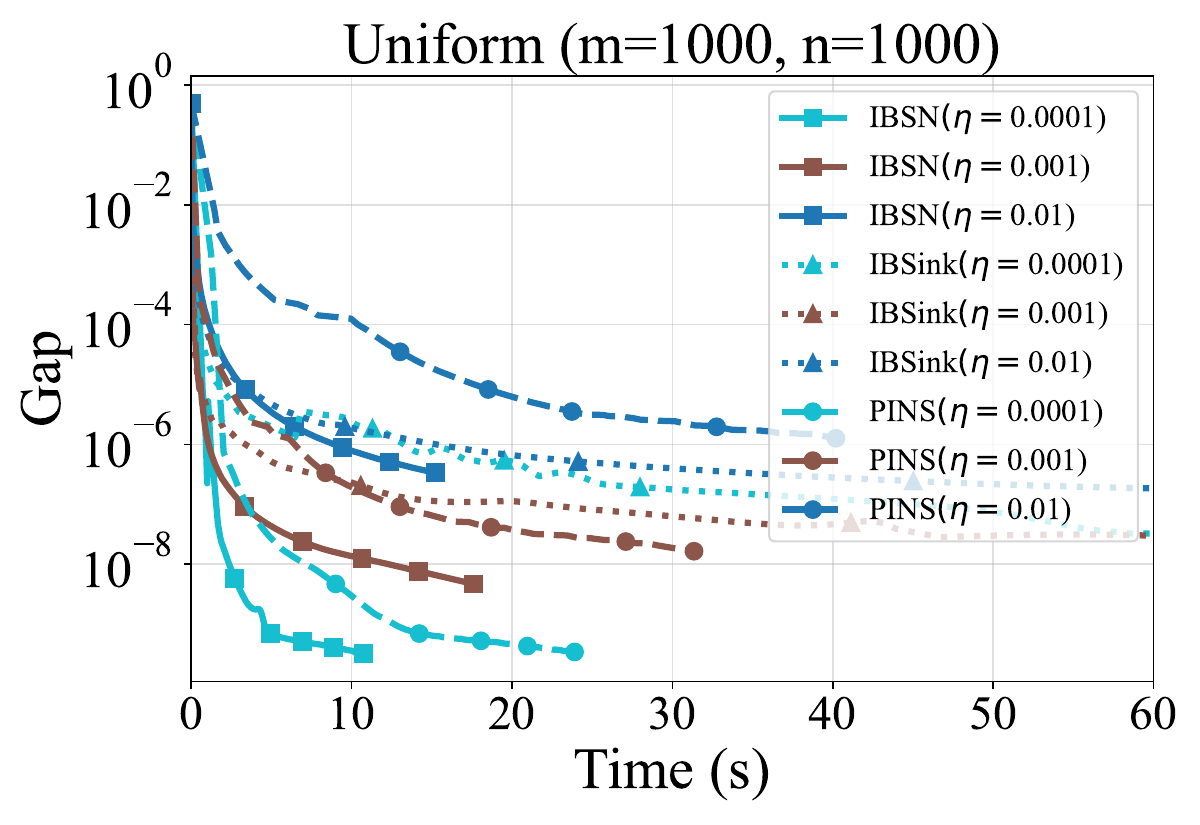}}}\hfill
{
\resizebox*{0.33 \textwidth}{!}{\includegraphics{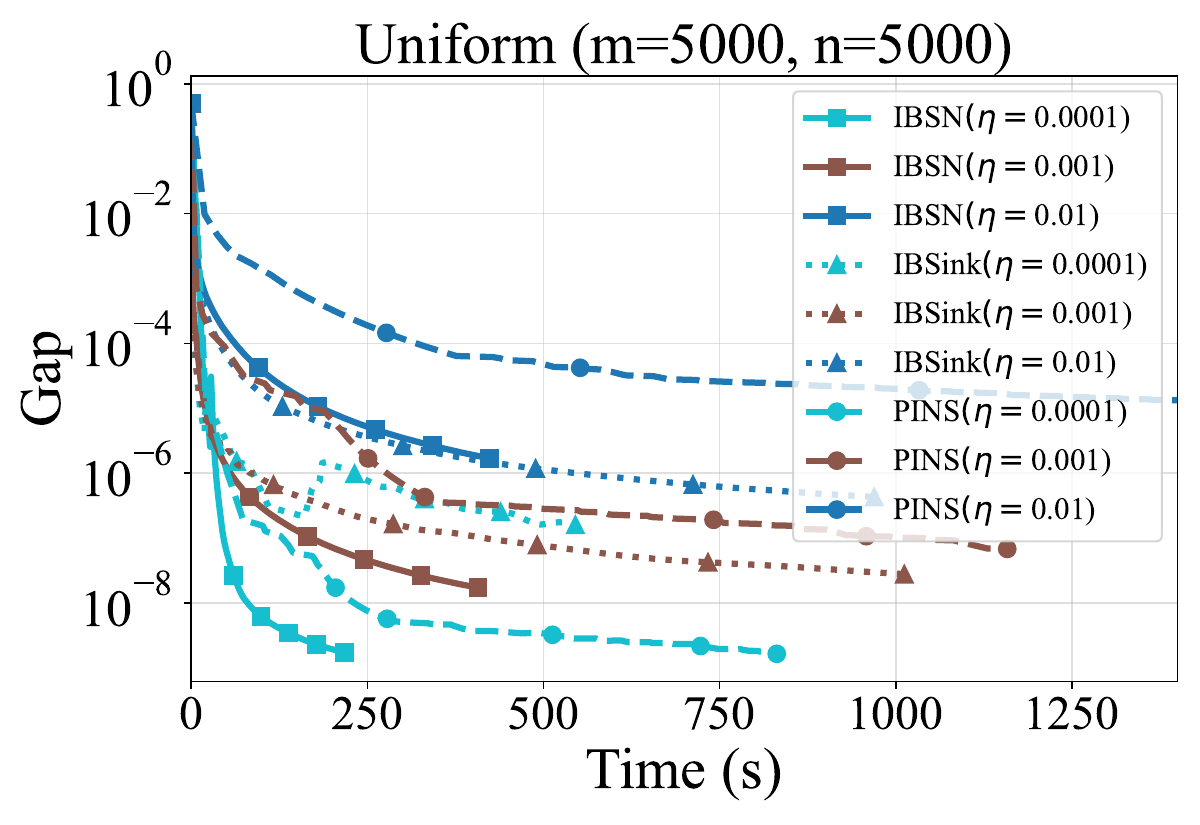}}}\hfill
{
\resizebox*{0.33 \textwidth}{!}{\includegraphics{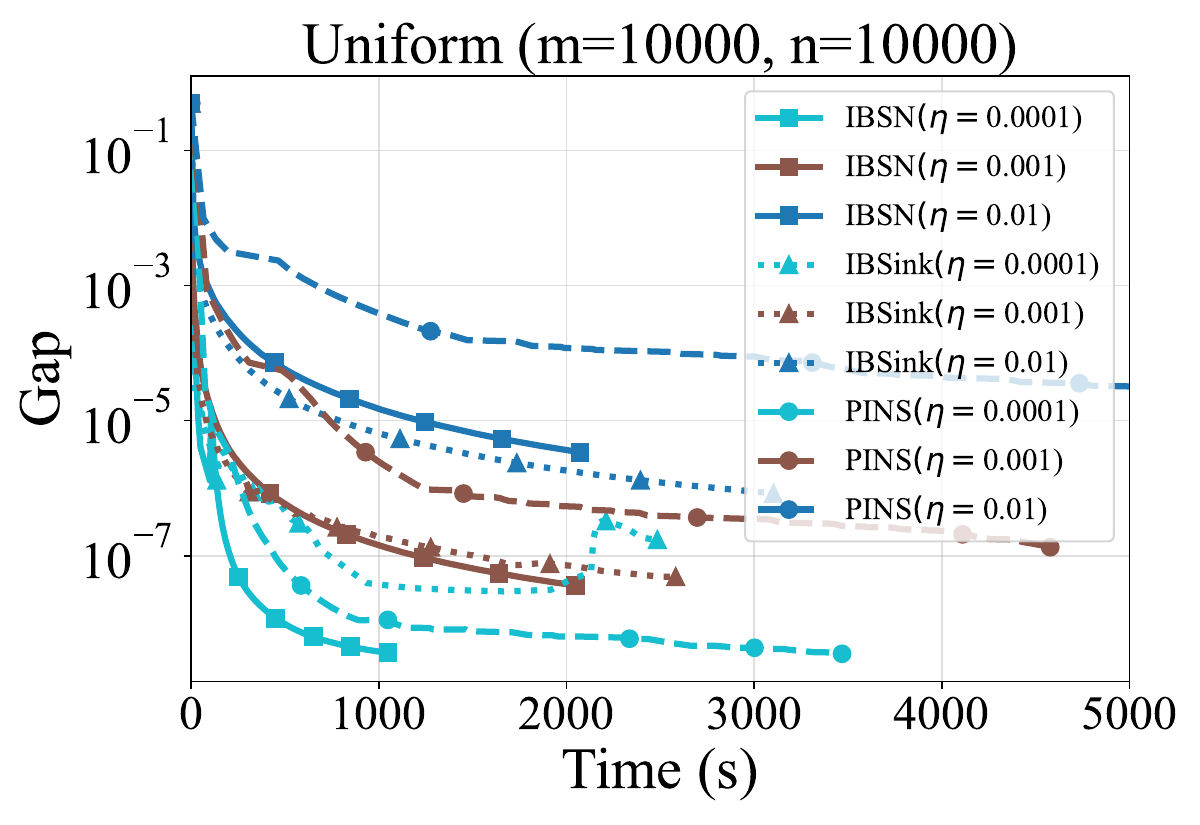}}}
{
\resizebox*{0.33 \textwidth}{!}{\includegraphics{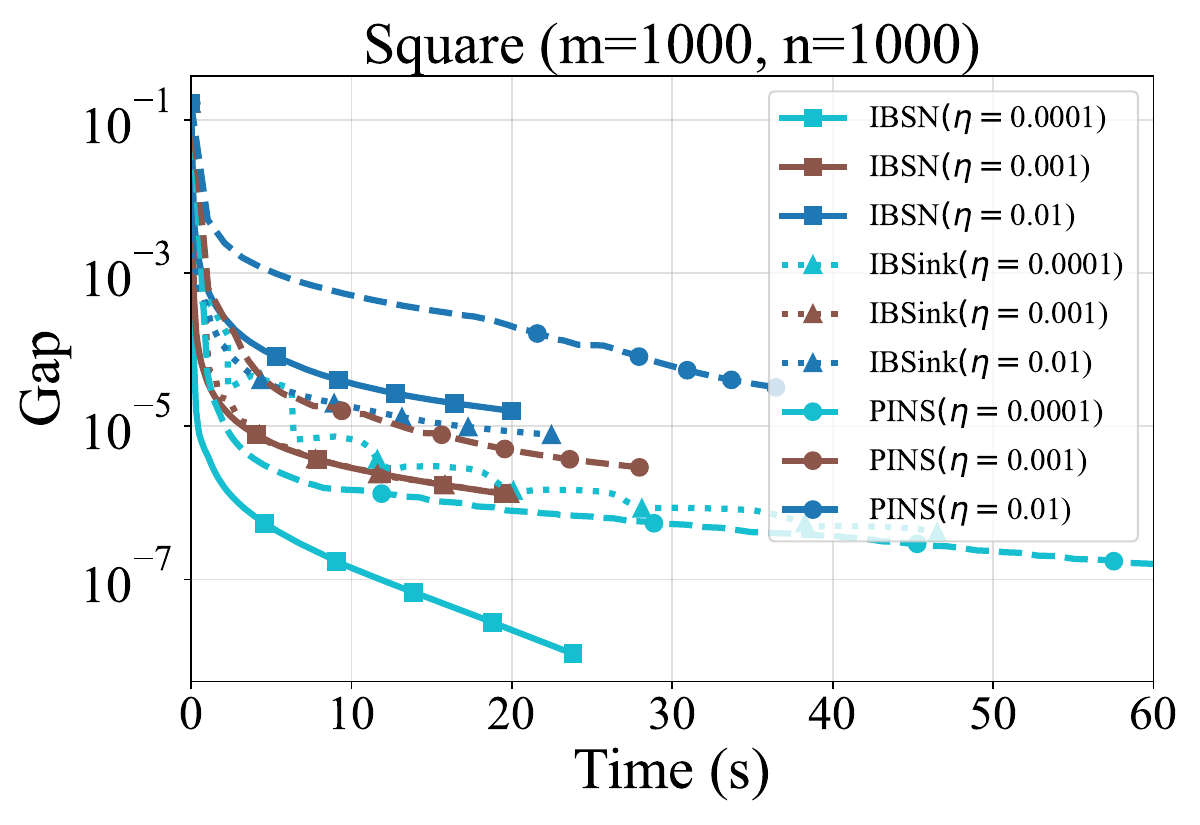}}}\hfill
{
\resizebox*{0.33 \textwidth}{!}{\includegraphics{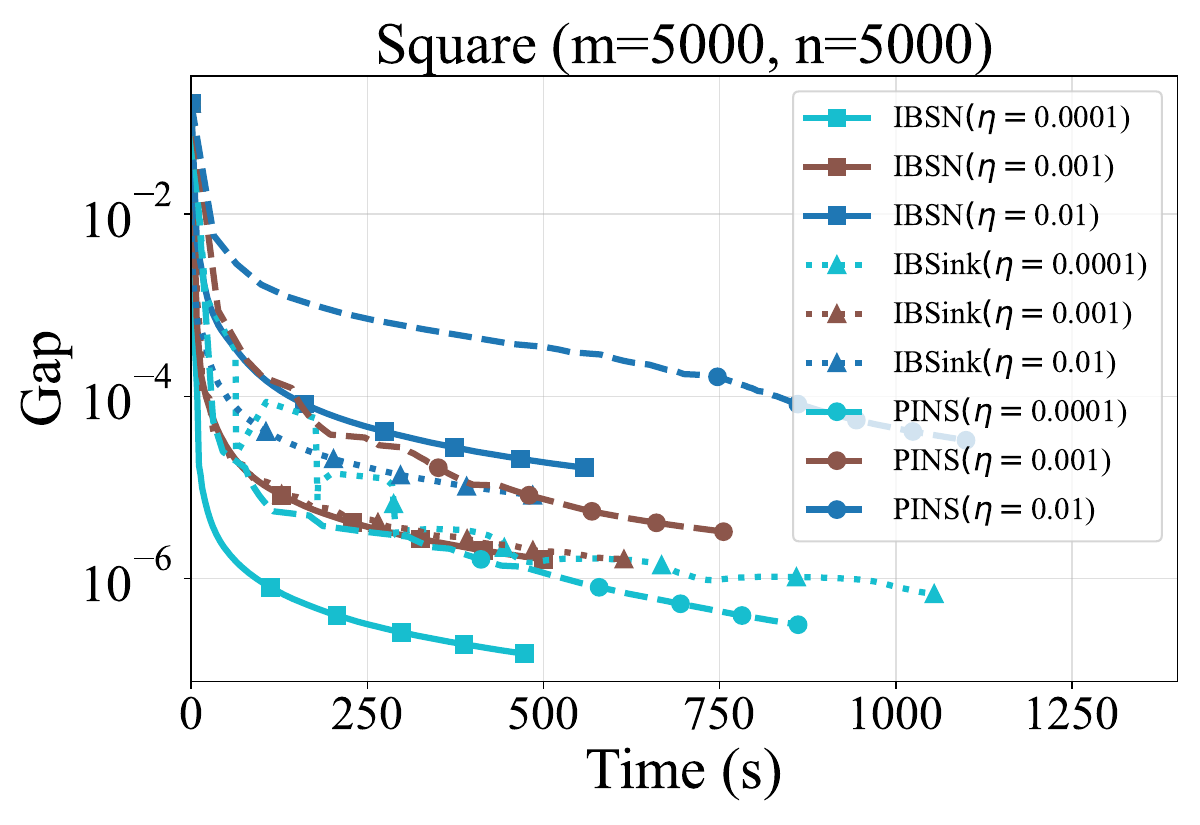}}}\hfill
{
\resizebox*{0.33 \textwidth}{!}{\includegraphics{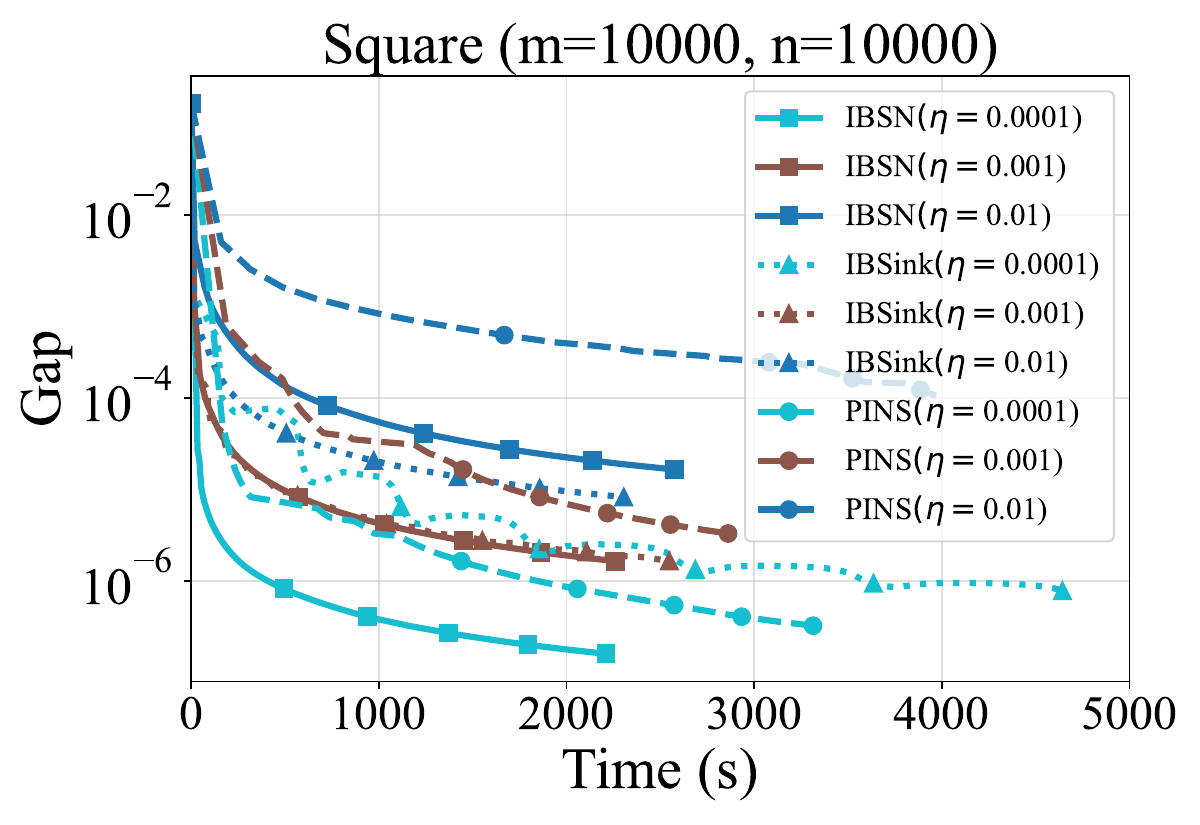}}}
\caption{Impact of the regularization parameter $\eta$ on IBSN, IBSink and PINS on synthetic data. Top: Uniform setting. Bottom: Square setting.}
\label{fig: Impact of the regularization parameter}
\end{figure*}

In this section, we investigate the effect of the regularization parameter $\eta$ on the performance of the IBSN, PINS, and IBSink algorithms under the synthetic settings.

The results, shown in Figure~\ref{fig: Impact of the regularization parameter}, demonstrate that IBSN consistently outperforms PINS across all configurations, achieving faster convergence and smaller objective gaps for the same $\eta$. In addition, as $\eta$ increases, both methods exhibit slower convergence, reflecting the effect of stronger regularization. 

\begin{table}[!t]
\centering
\caption{Performance comparison between IBSN and IBN (IBSN without sparsity process during the Newton stage).}
\label{tab: Performance comparison between IBSN and IBN during the Newton stage}
\begin{threeparttable}
\begin{tabular}{ccccc|ccccc}
    \toprule[2pt]
    \multicolumn{10}{c}{Size $m=n=1000$} \\ 
    \cmidrule{1-10}
    \multicolumn{5}{c|}{Synthetic: Uniform} & \multicolumn{5}{c}{Synthetic: Square} \\
    \cmidrule{1-10}
    Alg & NewtonIte & CGTime (s) & Time (s) & Gap & Alg & NewtonIte & CGTime (s) & Time (s) & Gap \\
    \cmidrule{1-10}
    IBN & 177 & 28.38 & 33.47 & 3.18e-10 & IBN & 301 & 61.55 & 72.03 & 1.07e-08 \\
    IBSN & 176 & \textbf{3.86} & \textbf{11.04} & 3.19e-10 & IBSN & 301 & \textbf{11.13} & \textbf{24.68} & 1.07e-08 \\
    \cmidrule{1-10}
    \multicolumn{10}{c}{Size $m=n=5000$} \\ 
    \cmidrule{1-10}
    \multicolumn{5}{c|}{Synthetic: Uniform} & \multicolumn{5}{c}{Synthetic: Square} \\
    \cmidrule{1-10}
    Alg & NewtonIte & CGTime (s) & Time (s) & Gap & Alg & NewtonIte & CGTime (s) & Time (s) & Gap \\
    \cmidrule{1-10}
    IBN & 174 & 497.19 & 659.57 & 1.70e-09 & IBN & 301 & 803.53 & 1101.43 & 1.49e-07\\
    IBSN & 174 & \textbf{9.69} & \textbf{219.22} & 1.70e-09 & IBSN & 301 & \textbf{74.37} & \textbf{457.10} & 1.49e-07 \\
    \cmidrule{1-10}
    \multicolumn{10}{c}{Size $m=n=10000$} \\ 
    \cmidrule{1-10}
    \multicolumn{5}{c|}{Synthetic: Uniform} & \multicolumn{5}{c}{Synthetic: Square} \\
    \cmidrule{1-10}
    Alg & NewtonIte & CGTime (s) & Time (s) & Gap & Alg & NewtonIte & CGTime (s) & Time (s) & Gap \\
    \cmidrule{1-10}
    IBN & 167 & 1688.99 & 2522.38 & 3.65e-09 & IBN & 300 & 2654.52 & 4303.08 & 1.59e-07 \\
    IBSN & 170 & \textbf{16.82} & \textbf{921.60} & 3.63e-09 & IBSN & 300 & \textbf{267.15} & \textbf{2205.74} & 1.59e-07 \\
    \bottomrule[2pt]
\end{tabular}
\begin{tablenotes}
    \footnotesize               
    \item Bold values indicate the most favorable results. The algorithm terminates when $\Delta_{\text{kkt}}<10^{-11}$ or after 300 outer iterations. ``NewtonIte", ``CGTime" and ``Time" represent the total number of Newton iteration, total runtime for finding the Newton directions and total running time for the algorithm, respectively.
\end{tablenotes}
\end{threeparttable}
\end{table}

\subsection{Impact of the sparsity in the Newton phase}
\label{app: Impact of the sparsity in the Newton phase}

To assess the effectiveness of the sparsity process introduced in the Newton phase, we compare the proposed IBSN algorithm with its non-sparse counterpart, denoted as IBN (IBSN without sparsity process during the Newton stage).
Both methods are evaluated on the Uniform and Square synthetic cost settings with problem sizes $m = n \in \{1000, 5000, 10000\}$. The results are given in Table~\ref{tab: Performance comparison between IBSN and IBN during the Newton stage}.

As observed from the table, introducing sparsity into the Hessian computation leads to a substantial reduction in the computational cost of solving the Newton systems, while achieving identical final gaps. In addition, the benefit of sparsity becomes more pronounced in finding a Newton direction as the problem size increases. We also observe that the total runtime advantage decreases for larger problems, this is primarily because other computational components, such as line search and the evaluation of the stopping criterion based on $D_{\phi}(\Proj_{\Omega}(X^{k+1}), X^{k+1})$, begin to dominate the overall cost. Nevertheless, the significant reduction in the time required for Newton direction computation clearly demonstrates the effectiveness of the proposed sparsity process in accelerating the Newton phase.

\subsection{Impact of the semi-dual formulation on Newton direction computation}
\label{app: Impact of the semi-dual formulation on Newton direction computation}


\begin{figure*}[t]
{
\resizebox*{0.33 \textwidth}{!}{\includegraphics{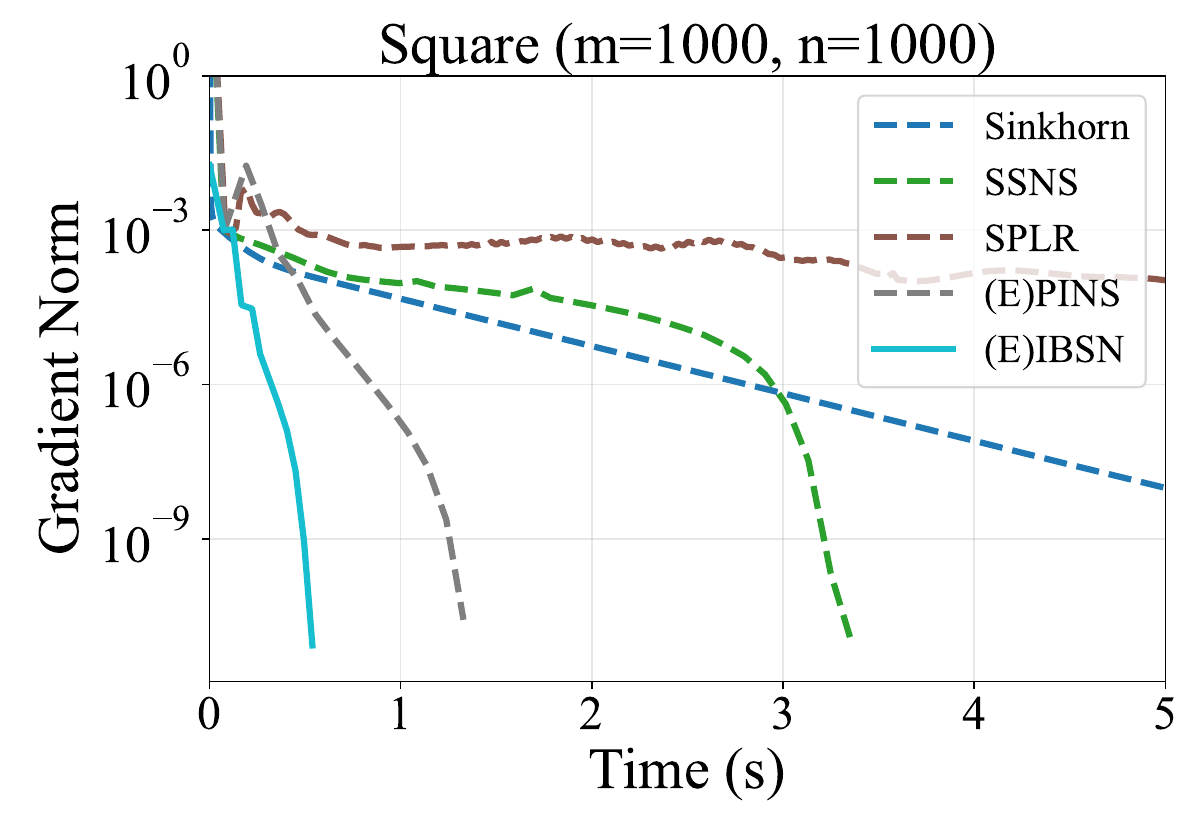}}}\hfill
{
\resizebox*{0.33 \textwidth}{!}{\includegraphics{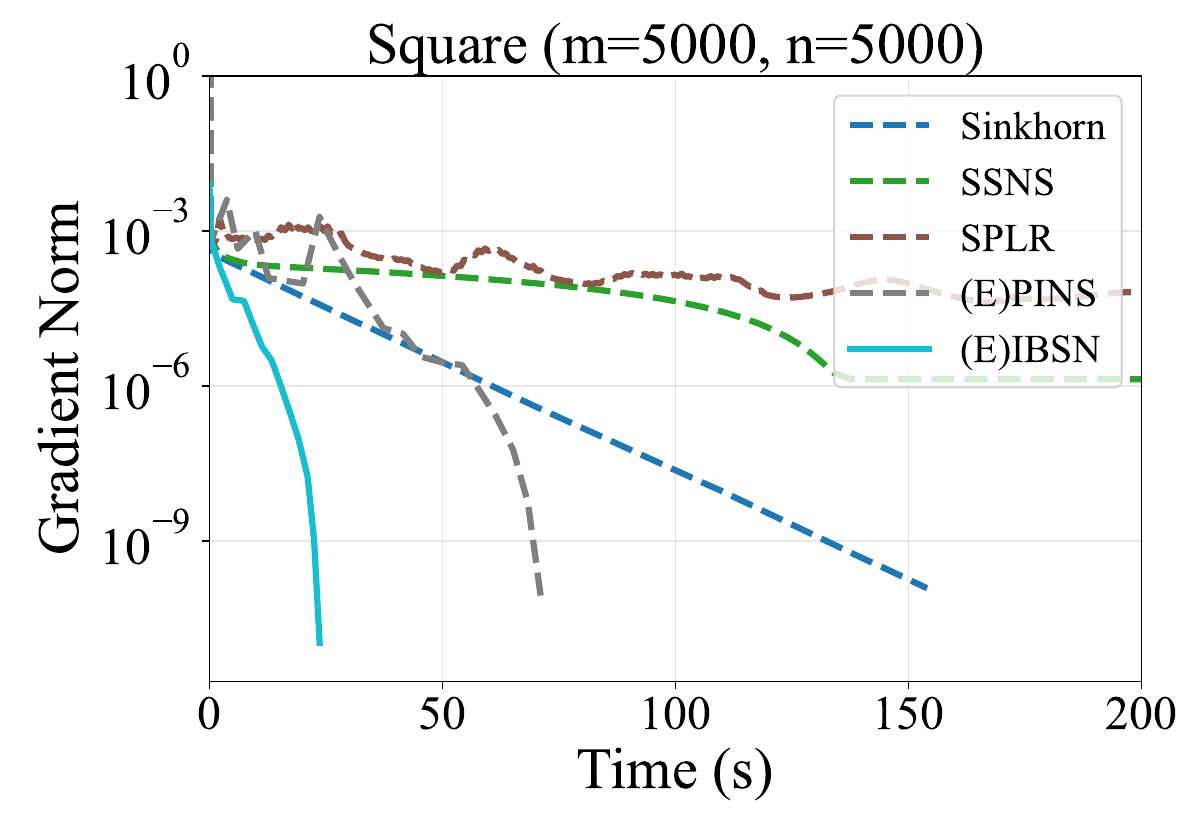}}}\hfill
{
\resizebox*{0.33 \textwidth}{!}{\includegraphics{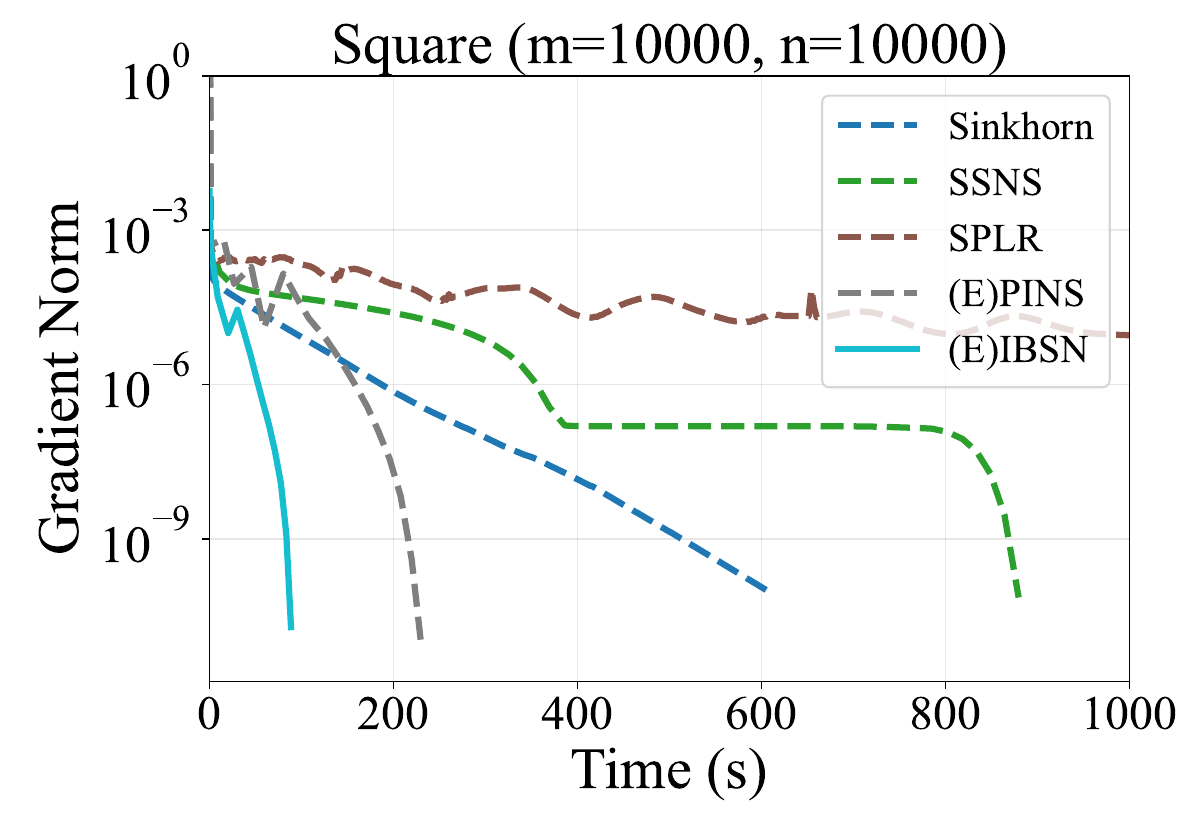}}}
{
\resizebox*{0.33 \textwidth}{!}{\includegraphics{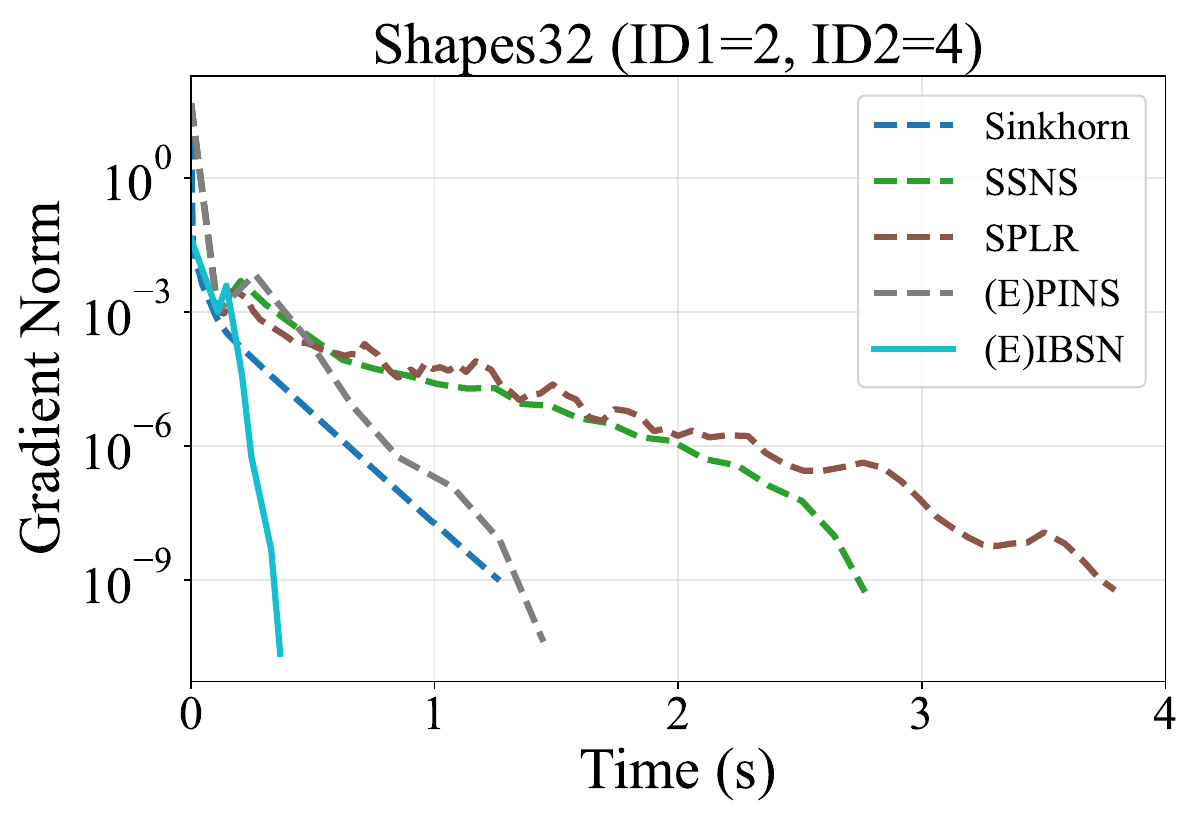}}}\hfill
{
\resizebox*{0.33 \textwidth}{!}{\includegraphics{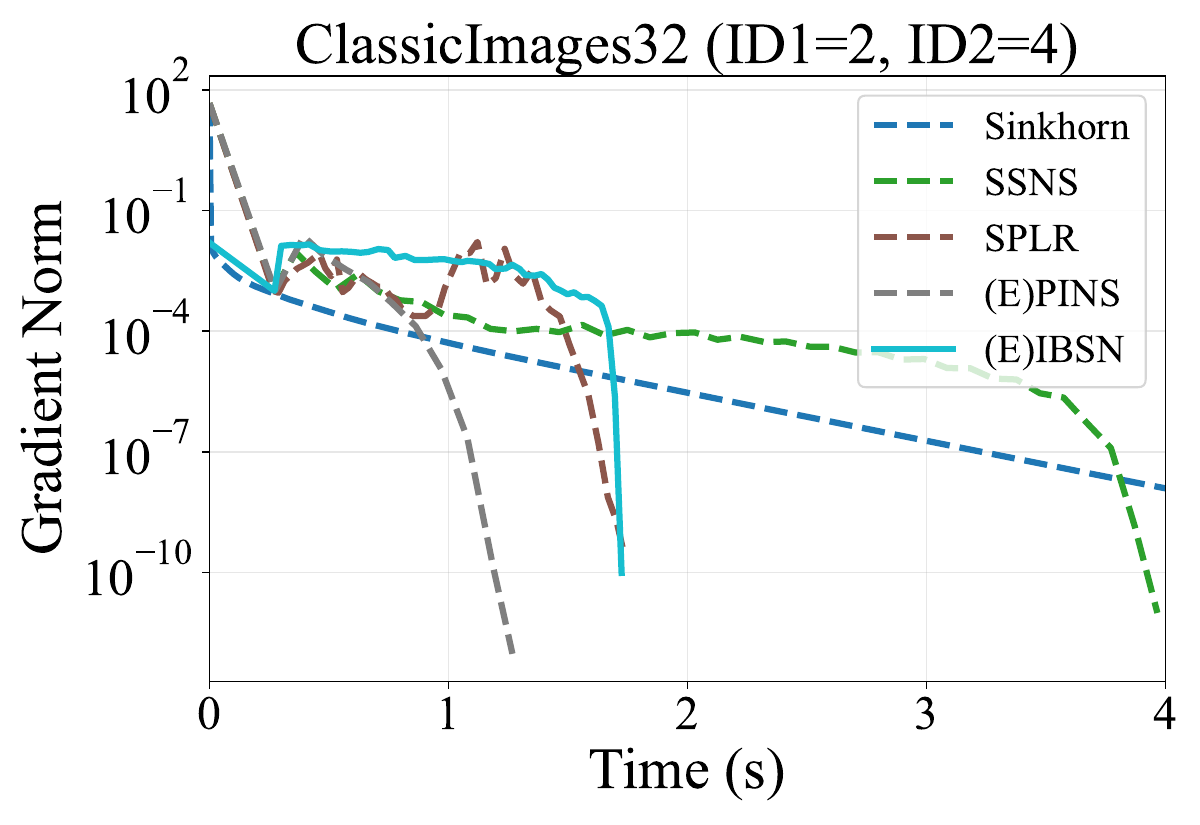}}}\hfill
{
\resizebox*{0.33 \textwidth}{!}{\includegraphics{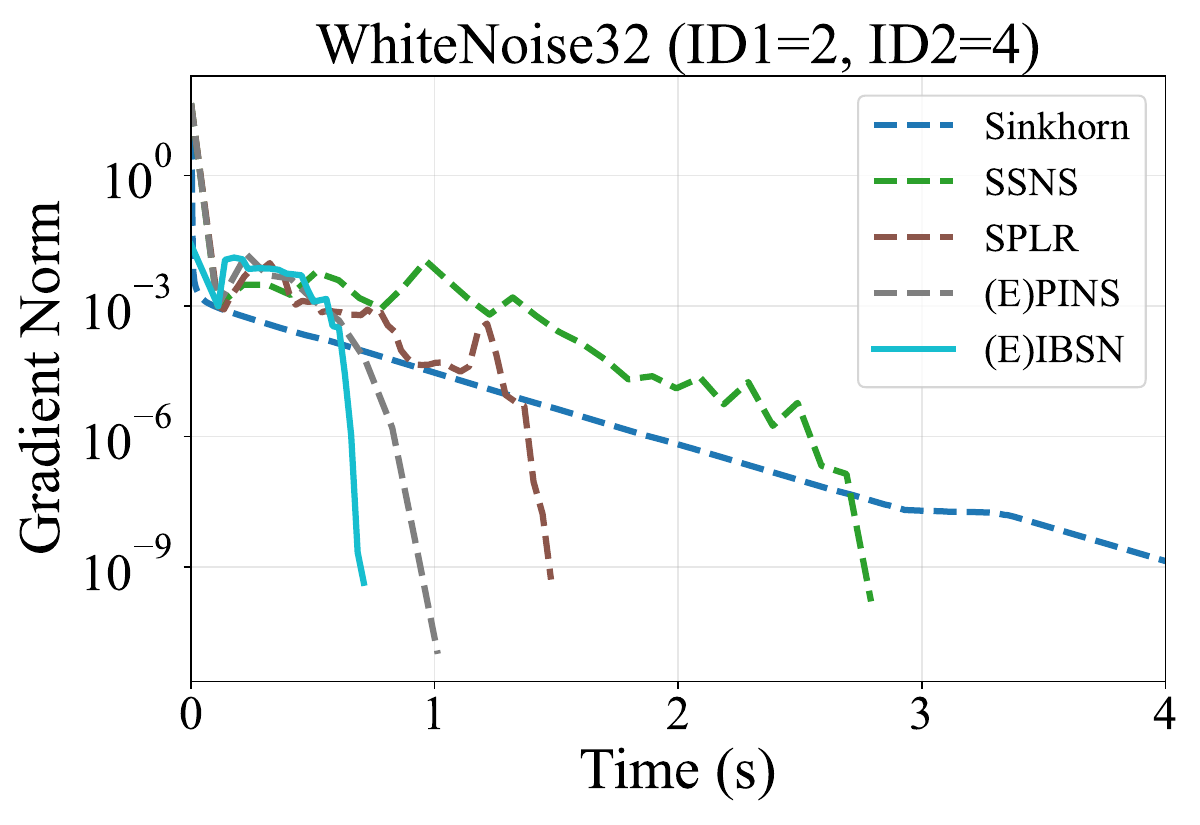}}}
{
\resizebox*{0.33 \textwidth}{!}{\includegraphics{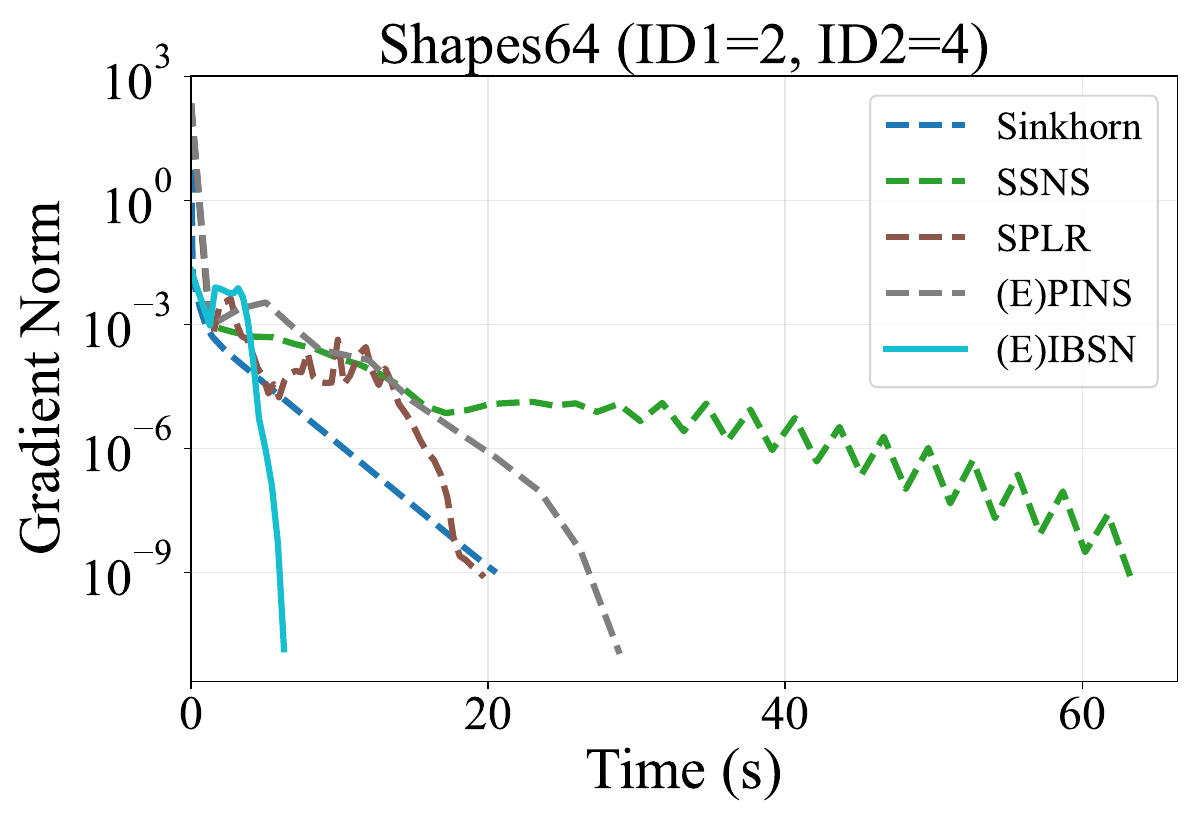}}}\hfill
{
\resizebox*{0.33 \textwidth}{!}{\includegraphics{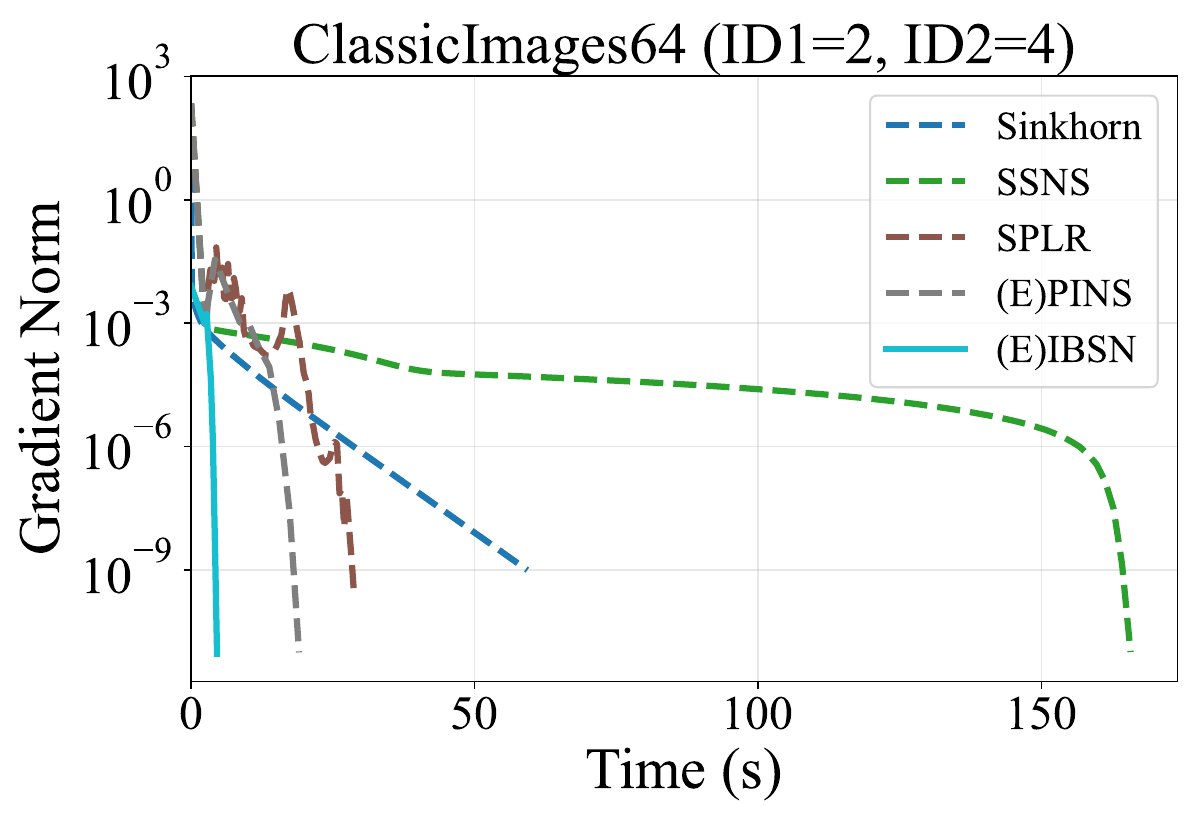}}}\hfill
{
\resizebox*{0.33 \textwidth}{!}{\includegraphics{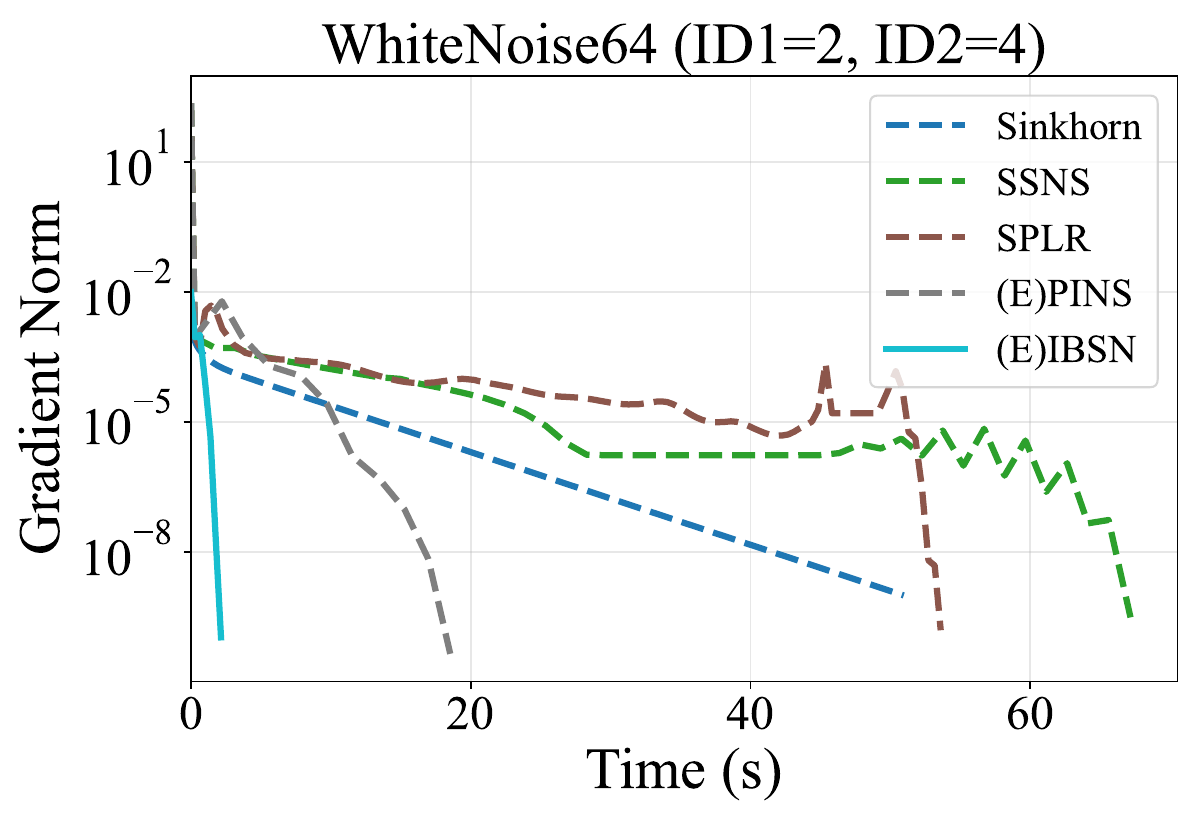}}}
\caption{Performance of different algorithms on EOT problem~\eqref{opt: EOT}. Top: Square setting. Middle: DOTmark with image size $32 \times 32$. Bottom: DOTmark with image size $64 \times 64$.}
\label{fig: Performance of different algorithms on EOT problem.}
\end{figure*}

In this subsection, we investigate the impact of the semi-dual formulation on the efficiency of computing the Newton direction. To do this, we compare the proposed IBSN algorithm with three second-order methods, PINS, SSNS, and SPLR. The baselines use the standard dual formulation with two dual variables, while IBSN works in the semi-dual space. We also use a shared warm start: run the Sinkhorn algorithm until the gradient norm is below $10^{-3}$, then switch to the Newton-type phase. This reduces sensitivity to initialization and makes the comparison focus on the Newton step. In addition, we also provide Sinkhorn algorithm as a reference. 


Since PINS and IBSN are originally designed to solve the exact OT problem (which involves a sequence of subproblems), we adapted them for this specific experiment. In both frameworks, each subproblem can be regarded as an EOT problem with a modified cost matrix $C - \eta \log(X^k)$. To ensure they solve the exact same EOT problem as the other methods, we fix $X^k = \1_m \1_n^\top$ and limit the algorithms to solve the subproblem just once, rather than iteratively. This modification ensures that all algorithms are addressing the same single EOT problem with the original cost matrix $C$. We refer to these adapted versions as (E)PINS and (E)IBSN. 

In the following experiments, we evaluate the impact of the semi-dual formulation across two distinct scenarios: (i) the balanced setting where $m=n$, and (ii) the unbalanced setting where $m > n$. The algorithm terminates when the gradient norm is less than $10^{-9}$ or attains its maximum number of iterations.

\paragraph{Performance on balanced $m=n$.} 
We first evaluate performance using the synthetic Square setting and the real DOTmark dataset, with the regularization parameter fixed at $\eta=10^{-4}$. As shown in Figure~\ref{fig: Performance of different algorithms on EOT problem.}, our proposed algorithm (E)IBSN performs well in both synthetic and real data. In addition, second-order methods typically exhibit a faster convergence rate than Sinkhorn algorithm once they enter the Newton regime. Table~\ref{tab: Performance of different algorithms in Newton direction computation.} presents more details about the cost of the Newton step. Across all experimental settings, IBSN requires the fewest CG iterations to find the Newton direction-typically only about half as many iterations as the other second-order methods. This efficiency stems directly from our semi-dual formulation, which reduces the dimensionality of the dual space and simplifies the linear system significantly.

\begin{table}[!t]
\centering
\caption{Performance of different algorithms in Newton direction computation under EOT problem~\eqref{opt: EOT}.}
\label{tab: Performance of different algorithms in Newton direction computation.}
\begin{threeparttable}
\begin{tabular}{c|ccc|ccc|ccc}
    \toprule[2pt]
    \multicolumn{10}{c}{Synthetic: Square} \\ 
    \cmidrule{1-10}
     & \multicolumn{3}{c|}{size $m=n=1000$} & \multicolumn{3}{c|}{size $m=n=5000$} & \multicolumn{3}{c}{size $m=n=10000$}\\
    \cmidrule{1-10}
    Alg & MeanCG & Ite & Time (s) & MeanCG & Ite & Time (s) & MeanCG & Ite & Time (s) \\
    \cmidrule{1-10}
    SSNS & 305.8 (69) & 34 & 3.36 & 712.4 (92) & 116 & 373.37 & 749.4 (135) & 61 & 879.52 \\
    SPLR & - (-) & - & 20.44 & - (-) & - & 554.80 & - (-) & - & 2068.48 \\
    (E)PINS & 292.8 (49) & 14 & 1.33 & 418.0 (143) & 22 & 71.24 & 425.5 (173) & 17 & 229.77 \\
    (E)IBSN & \textbf{163.6} (16) & \textbf{11} & \textbf{0.54} & \textbf{290.4} (88) & \textbf{13} & \textbf{23.64} & \textbf{241.2} (97) & \textbf{13} & \textbf{88.63} \\
    \cmidrule{1-10}
    \multicolumn{10}{c}{Real: DOTmark (ID1 = 2, ID2 = 4) with size $m = n = 1024$} \\ 
    \cmidrule{1-10}
    & \multicolumn{3}{|c|}{Shapes32} & \multicolumn{3}{c|}{ClassicImages32} & \multicolumn{3}{c}{WhiteNoise32}\\
    \cmidrule{1-10}
    Alg & MeanCG & Ite & Time (s) & MeanCG & Ite & Time (s) & MeanCG & Ite & Time (s) \\
    \cmidrule{1-10}
    SSNS & 1071.3 (383) & 23 & 2.77 & 371.9 (89) & 40 & 3.97 & 346.3 (129) & 29 & 2.79 \\
    SPLR & 694.9 (528) & 80 & 3.79 & 210.3 (144) & 50 & 1.74 & 233.3 (110) & 47 & 1.48 \\
    (E)PINS & 763.2 (251) & 9 & 1.45 & 318.6 (67) & \textbf{11} & \textbf{1.27} & 317.7 (76) & \textbf{10} & 1.01 \\
    (E)IBSN & \textbf{360.3} (79) & \textbf{8} & \textbf{0.37} & \textbf{163.7} (27) & 43 & 1.72 & \textbf{164.6} (34) & 20 & \textbf{0.71} \\
    \cmidrule{1-10}
    \multicolumn{10}{c}{Real: DOTmark (ID1 = 2, ID2 = 4) with size $m=n=4096$} \\ 
    \cmidrule{1-10}
    & \multicolumn{3}{|c|}{Shapes64} & \multicolumn{3}{c|}{ClassicImages64} & \multicolumn{3}{c}{WhiteNoise64}\\
    \cmidrule{1-10}
    Alg & MeanCG & Ite & Time (s) & MeanCG & Ite & Time (s) & MeanCG & Ite & Time (s) \\
    \cmidrule{1-10}
    SSNS & 770.4 (190) & 43 & 63.24 & 275.5 (99) & 113 & 165.73 & 396.4 (87) & 46 & 67.17 \\
    SPLR & 509.4 (205) & 46 & 19.73 & 151.0 (110) & 71 & 28.75 & 219.8 (76) & 123 & 53.60 \\
    (E)PINS & 617.9 (206) & \textbf{12} & 28.85 & 265.1 (57) & 11 & 19.05 & 324.5 (78) & 11 & 18.53 \\
    (E)IBSN & \textbf{236.0} (113) & 14 & \textbf{6.26} & \textbf{162.3} (27) & \textbf{7} & \textbf{4.58} & \textbf{195.2} (25) & \textbf{6} & \textbf{2.14} \\
    \bottomrule[2pt]
\end{tabular}
\begin{tablenotes}
    \footnotesize               
    \item Bold values indicate the most favorable results. ``MeanCG" denotes the mean number (standard error) of CG iterations in finding a Newton direction, and ``Ite" denotes the total number of iterations. ``-'' indicates the algorithm attains its maximum number of iterations, and its gradient norm is larger than $10^{-9}$. 
\end{tablenotes}
\end{threeparttable}
\end{table}


\begin{figure*}[!t]
{
\resizebox*{0.33 \textwidth}{!}{\includegraphics{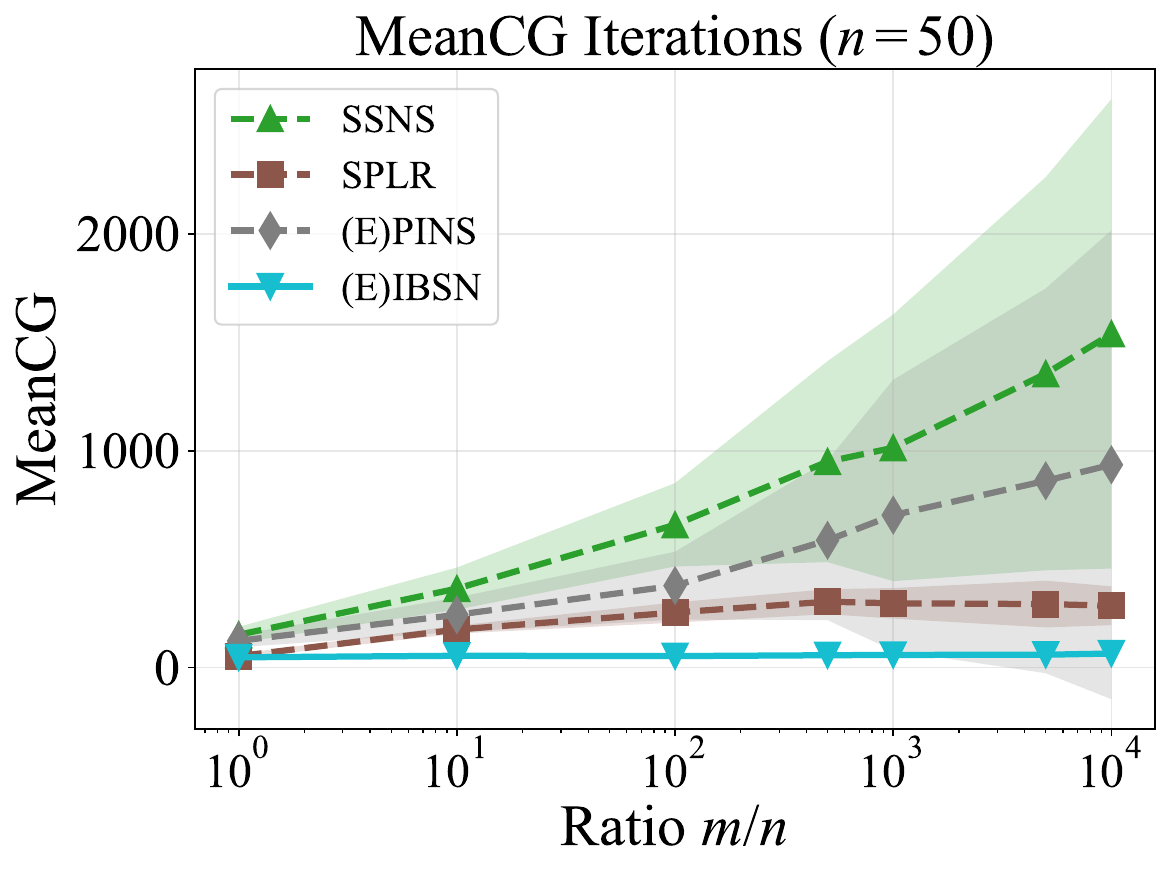}}}\hfill
{
\resizebox*{0.33 \textwidth}{!}{\includegraphics{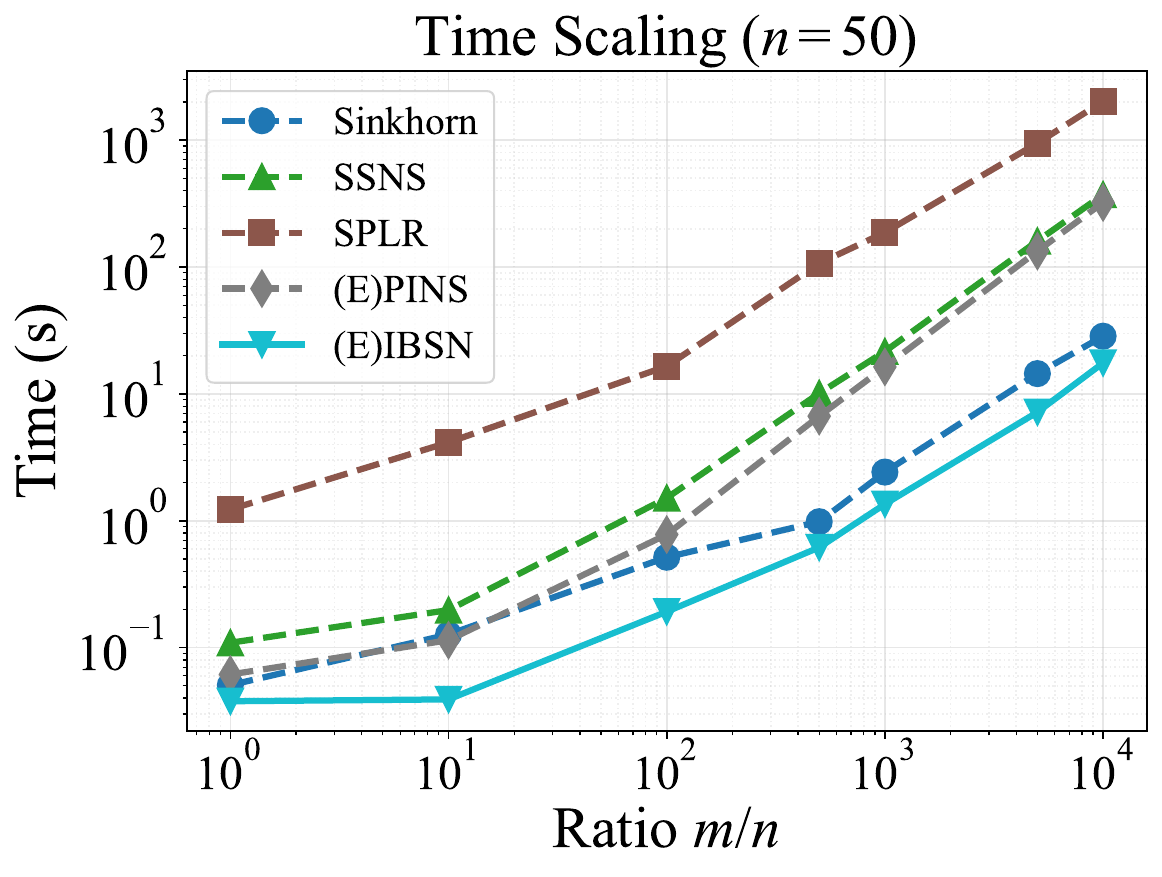}}}\hfill
{
\resizebox*{0.33 \textwidth}{!}{\includegraphics{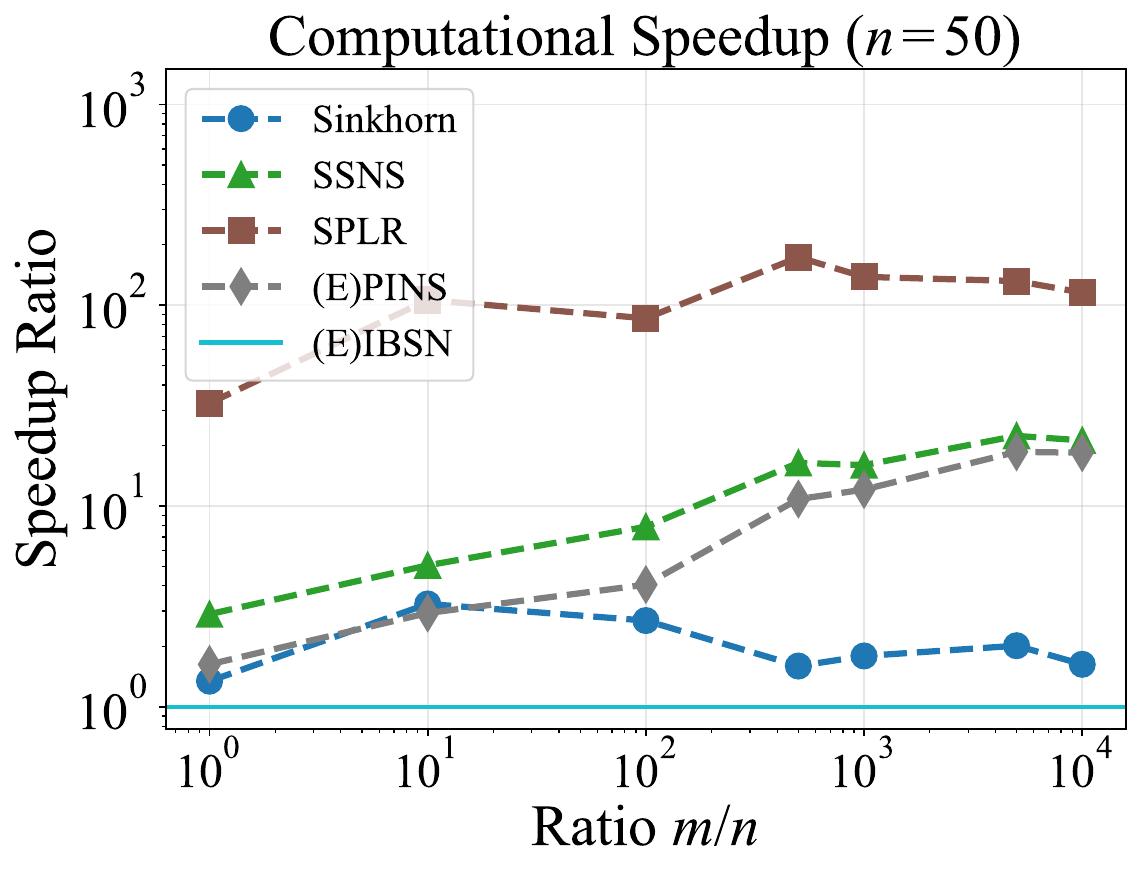}}}
{
\resizebox*{0.33 \textwidth}{!}{\includegraphics{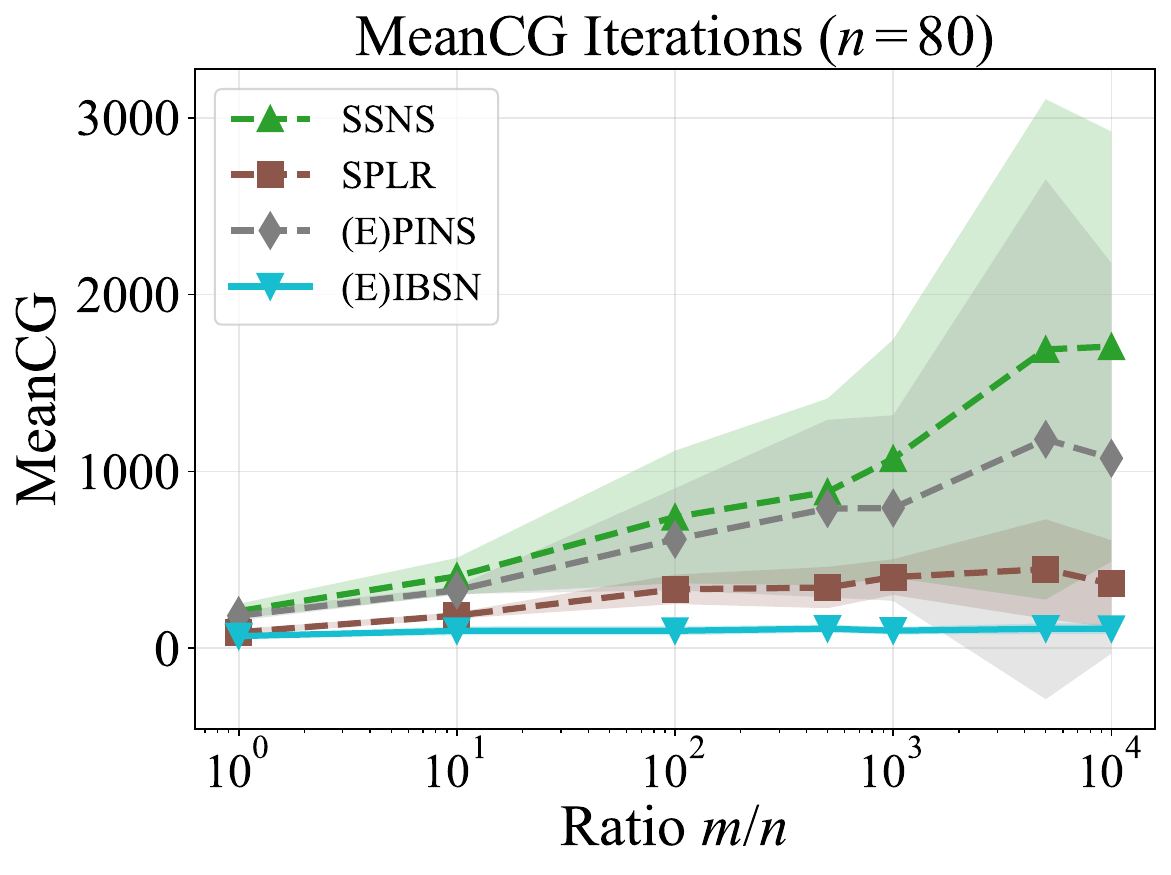}}}\hfill
{
\resizebox*{0.33 \textwidth}{!}{\includegraphics{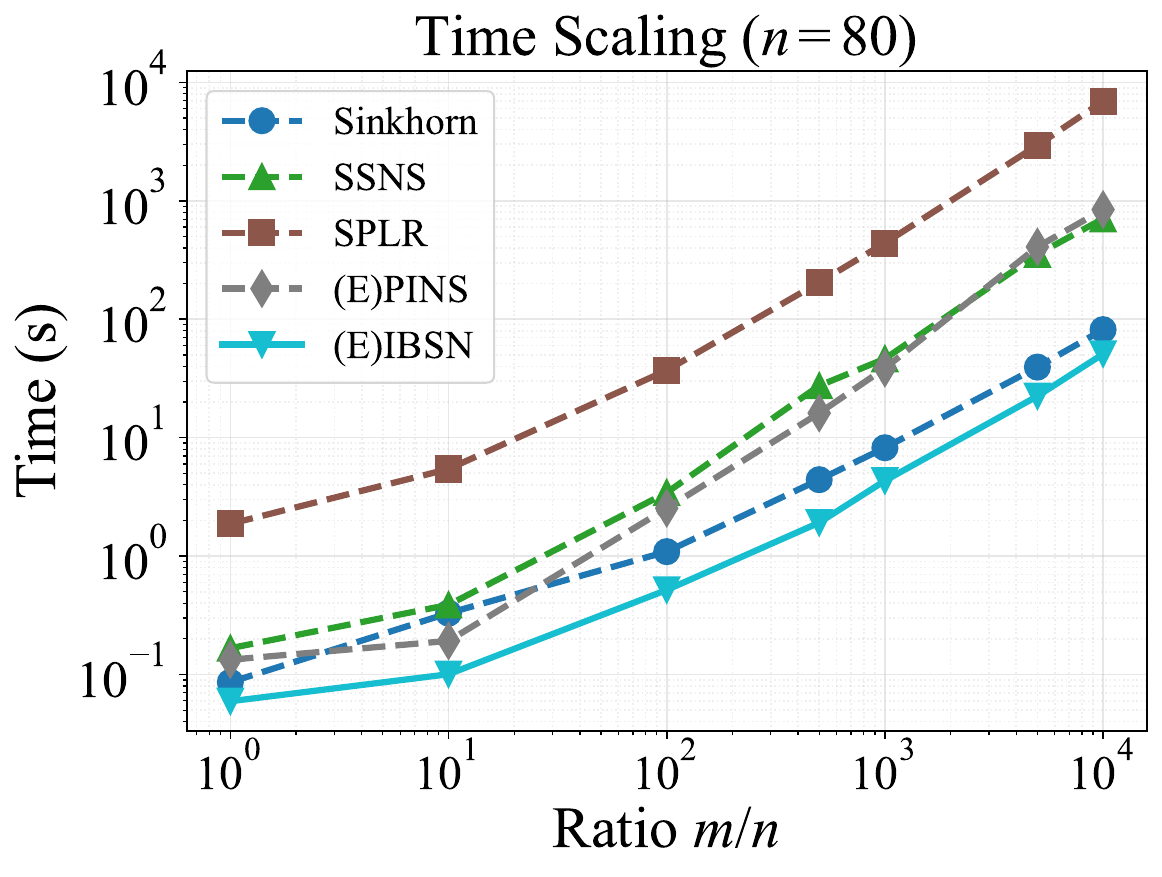}}}\hfill
{
\resizebox*{0.33 \textwidth}{!}{\includegraphics{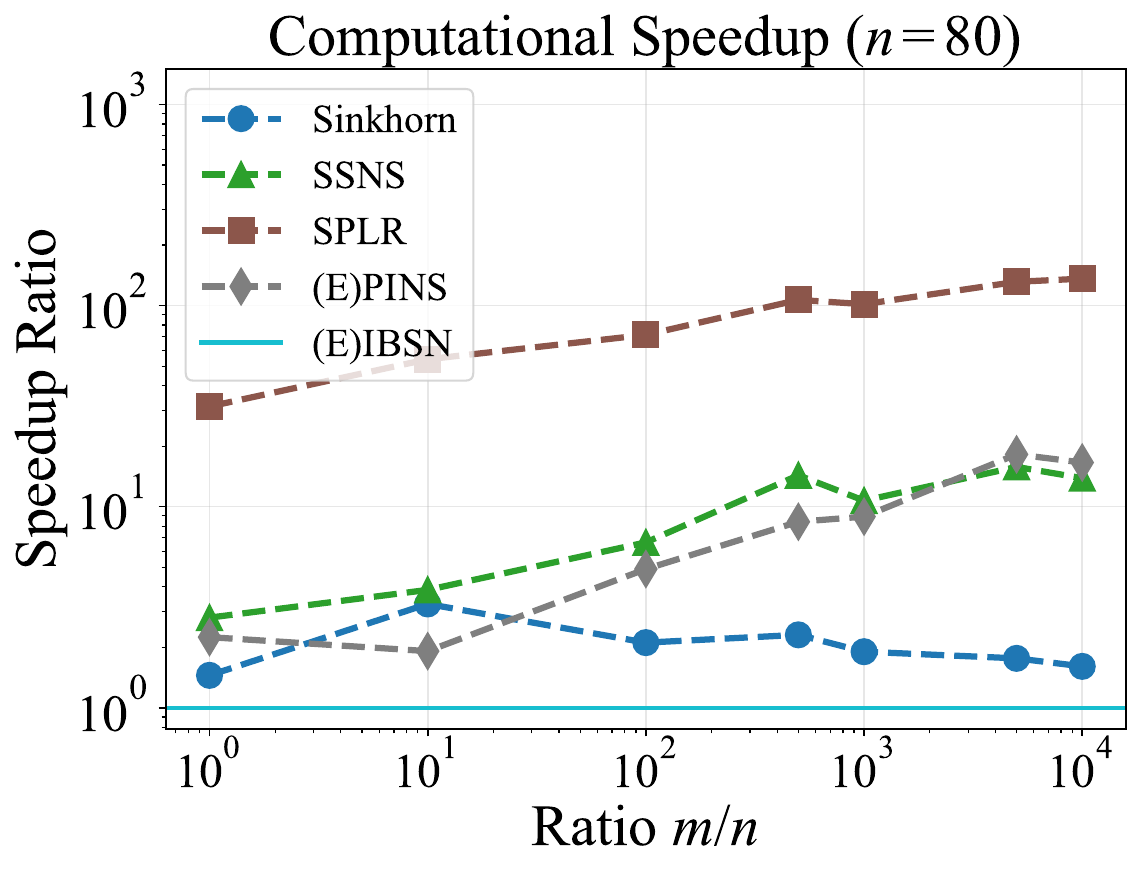}}}
\caption{Scalability analysis on highly unbalanced spherical OT problems. The target dimension is fixed at $n=50$ (top) and $n=80$ (bottom). Left: Mean number of CG iterations required to compute a Newton direction, with shaded regions representing the standard error. Middle: Total running time in log-log scale. Right: Speedup ratio of (E)IBSN relative to other algorithms. A value of $c > 1$ indicates that (E)IBSN is $c$ times faster than the competing algorithm.}
\label{fig:nonsquare_performance}
\end{figure*}

\paragraph{Performance on unbalanced $m > n$.}

Next, we investigate the performance of the algorithms in unbalanced settings where $m > n$. To do this, we conduct experiments on a spherical transport problem between supply and demand locations, inspired by~\citet{amos2023meta}. The cost matrix $C \in \R^{m \times n}$ is given by the spherical distance, i.e., $C_{ij} = \arccos(\langle \vect{x}_i, \vect{y}_j \rangle)$, where $\vect{x}_i, \vect{y}_j \in \mathbb{S} := \{\vect{z} \in \mathbb{R}^3 : \|\vect{z}\| = 1\}$. For the data generation, the demand locations $\{\vect{x}_i\}_{i=1}^m$ from Earth's population density data~\cite{doxsey2015taking}, while the supply locations $\{\vect{y}_j\}_{j=1}^n$ are sampled uniformly from Earth's landmass. The marginal weights $\vect{a}$ and $\vect{b}$ are sampled independently from a uniform distribution on $(0,1)$ and normalized such that $\1_m^\top \vect{a} = \1_n^{\top}\vect{b} = 1$. We set the regularization parameter to $\eta=10^{-3}$.

Figure~\ref{fig:nonsquare_performance} demonstrates that (E)IBSN requires the fewest mean number of CG iterations to find a Newton direction and consistently outperforms among all algorithms as the source dimension $m$ increases. This efficiency is attributed to our semi-dual formulation, which restricts the Hessian matrix to a low dimension regardless of the growth in the source dimension $m$. 

\subsection{More comparisons on EOT problem~\eqref{opt: EOT}}
In this subsection, we extend our comparison to include the following methods, all implemented using the Python-OT (POT) package~\cite{flamary2021pot, flamary2024pot}:

\begin{itemize}
    \item L2ROT: Solve the OT problem using $\ell_2$-regularization~\cite{blondel2018smooth}. 
    \item S2EOT: Solve the EOT problem using sparsity-constrained regularization~\cite{liu2022sparsity} (sparsity is chosen as 2). 
    \item LogSink: Solve the EOT problem using Sinkhorn-Knopp matrix scaling algorithm with the log stabilization~\cite{schmitzer2019stabilized}. 
    \item GCG: Solve the OT problem using $\ell_2$ and entropic regularization with the generalized conditional gradient algorithm~\cite{courty2016optimal,rakotomamonjy2015generalized}.
\end{itemize}

All regularization parameters are set to be $10^{-4}$. Table~\ref{tab: Comparison of different algorithms in solving EOT problem} presents the results. Our proposed (E)IBSN algorithm consistently outperforms these baselines on both synthetic and real datasets, achieving significantly lower gradient norms in less computational time. These results demonstrate that (E)IBSN is a highly effective and robust solver for EOT problems~\eqref{opt: EOT}.

\begin{table}[t]
\centering
\caption{Comparison of different algorithms in POT package when solving EOT problem~\eqref{opt: EOT}.}
\label{tab: Comparison of different algorithms in solving EOT problem}
\begin{threeparttable}
\begin{tabular}{ccc|ccc|ccc}
    \toprule[2pt]
    \multicolumn{9}{c}{Synthetic: Square} \\ 
    \cmidrule{1-9}
    \multicolumn{3}{c|}{size $m=n=1000$} & \multicolumn{3}{c|}{size $m=n=5000$} & \multicolumn{3}{c}{size $m=n=10000$}\\
    \cmidrule{1-9}
    Alg & GradNorm & Time (s) & Alg & GradNorm & Time (s) & Alg & GradNorm & Time (s) \\
    \cmidrule{1-9}
    L2ROT & 1.90e-02 & 3.79 & L2ROT & 1.13e-02 & 144.34 & L2ROT & 5.55e-03 & 548.38\\
    S2EOT & 2.61e-02 & 5.65 & S2EOT & 9.57e-03 & 209.74 & S2EOT & 5.88e-03 & 833.64\\
    LogSink & 5.14e-02 & 25.49 & LogSink & 2.31e-02 & 774.38 & LogSink & 1.63e-02 & 2530.27\\
    GCG & 1.37e-04 & 2.33 & GCG & 1.43e-04 & 64.16 & GCG & 3.75e-05 & 217.92\\
    (E)IBSN & \textbf{8.44e-12} & \textbf{0.55} & (E)IBSN & \textbf{1.08e-11} & \textbf{22.89} & (E)IBSN & \textbf{1.92e-11} & \textbf{90.91} \\
    \cmidrule{1-9}
    \multicolumn{9}{c}{Real: DOTmark (ID1 = 2, ID2 = 4) with size $m = n = 1024$} \\ 
    \cmidrule{1-9}
    \multicolumn{3}{c|}{Shapes32} & \multicolumn{3}{c|}{ClassicImages32} & \multicolumn{3}{c}{WhiteNoise32}\\
    \cmidrule{1-9}
    Alg & GradNorm & Time (s) & Alg & GradNorm & Time (s) & Alg & GradNorm & Time (s) \\
    \cmidrule{1-9}
    L2ROT & 8.18e-02 & 5.62 & L2ROT & 1.08e-01 & 4.89 & L2ROT & 4.27e-02 & 4.39 \\
    S2EOT & 8.02e-01 & 14.34 & S2EOT & 2.62e-01 & 10.75 & S2EOT & 7.74e-02 & 9.99 \\
    LogSink & 6.20e-02 & 28.92 & LogSink & 4.75e-02 & 27.41 & LogSink & 5.09e-02 & 27.04 \\
    GCG & 4.56e-06 & 2.15 & GCG & 2.43e-04 & 2.20 & GCG & 1.89e-04 & 2.17 \\
    (E)IBSN & \textbf{2.25e-11} & \textbf{0.37} & (E)IBSN & \textbf{9.72e-11} & \textbf{1.72} & (E)IBSN & \textbf{4.30e-10} & \textbf{0.71} \\
    \cmidrule{1-9}
    \multicolumn{9}{c}{Real: DOTmark (ID1 = 2, ID2 = 4) with size $m=n=4096$} \\ 
    \cmidrule{1-9}
    \multicolumn{3}{c|}{Shapes64} & \multicolumn{3}{c|}{ClassicImages64} & \multicolumn{3}{c}{WhiteNoise64}\\
    \cmidrule{1-9}
    Alg & GradNorm & Time (s) & Alg & GradNorm & Time (s) & Alg & GradNorm & Time (s) \\
    \cmidrule{1-9}
    L2ROT & 2.85e-01 & 103.11 & L2ROT & 1.41e-01 & 85.52 & L2ROT & 1.80e-02 & 79.11 \\
    S2EOT & 3.19e-01 & 276.25 & S2EOT & 8.48e-02 & 219.66 & S2EOT & 2.68e-02 & 199.39 \\ 
    LogSink & 3.12e-02 & 439.69 & LogSink & 2.40e-02 & 440.81 & LogSink & 2.55e-02 & 435.39 \\ 
    GCG & 4.03e-06 & 34.64 & GCG & 1.22e-04 & 41.49 & GCG & 4.09e-05 & 40.48 \\
    (E)IBSN & \textbf{1.39e-11} & \textbf{6.26} & (E)IBSN & \textbf{9.19e-12} & \textbf{4.58} & (E)IBSN & \textbf{1.08e-10} & \textbf{2.14} \\
    \bottomrule[2pt]
\end{tabular}
\begin{tablenotes}
    \footnotesize               
    \item Bold values indicate the most favorable results. The algorithm terminates when the gradient norm (GradNorm) is less than $10^{-9}$ or attains its maximum number of iterations.
\end{tablenotes}
\end{threeparttable}
\end{table}



\section{Color transfer}
\label{app: Color Transfer}

Color transfer aims to modify the color distribution of a source image so that it matches the color palette of a target image, while preserving the structural content of the source. Specifically, by interpreting image colors as probability distributions in a suitable color space, the optimal transport framework provides a method for transferring color characteristics between images. 

Following the experimental protocol in~\cite{zhang2025hot}, we conduct the color transfer experiments in the CIE-Lab color space. We apply the K-means clustering algorithm with $K = 512$ to quantize the color space of each image, and use the resulting cluster centers and their normalized frequencies to construct the discrete color distributions. The proposed IBSN algorithm is then employed to compute the optimal transport plan between these two distributions, and the resulting mapping is used to transfer the target’s color style onto the source image. The results of color transfer for selected image pairs can be found in Figure~\ref{fig: color transfer}.

\begin{figure*}[t]
\centering
{
\resizebox*{0.8 \textwidth}{!}{\includegraphics{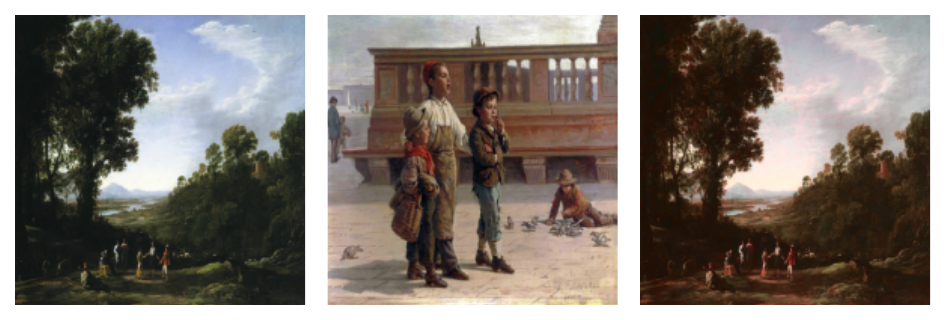}}}
{
\resizebox*{0.8 \textwidth}{!}{\includegraphics{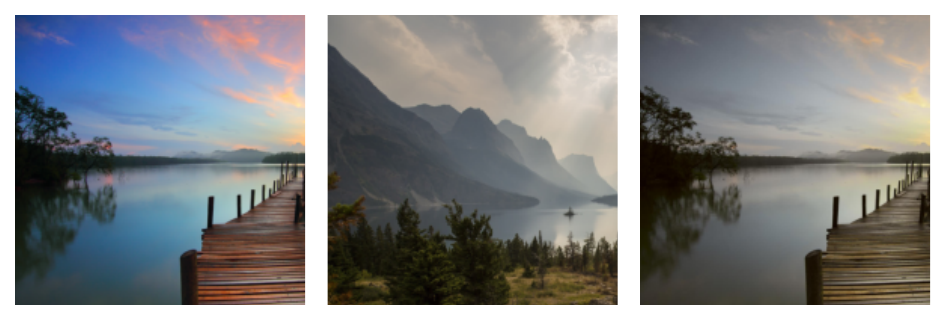}}}
{
\resizebox*{0.8 \textwidth}{!}{\includegraphics{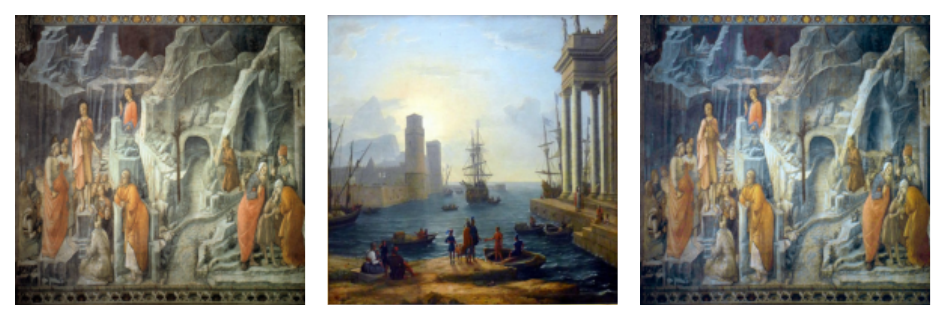}}}
\caption{Color transfer results obtained using the IBSN algorithm. Each triplet shows the source image (left), the target image (middle), and the resulting image after applying color transfer (right).}
\label{fig: color transfer}
\end{figure*}

\end{document}